\newcommand{\tstar}{
    \begin{tikzpicture}[scale=0.5]
        \draw (0,0) -- (0,0.4);
        \draw (-0.25,0) -- (0,0.4);
        \draw (0.25,0) -- (0,0.4);
    \end{tikzpicture}
}
\newcommand{\td}{\widetilde}
\newcommand{\ptdH}{\partial_{ij}\bar{H}}
\def\d{\,\mathrm{d}}
\crefname{equation}{}{}
\crefname{lemma}{Lemma}{Lemmas}
\crefname{page}{p.}{pp.}
\numberwithin{equation}{section}
\theoremstyle{plain}
\newtheorem{theorem}{Theorem}[section]
\newtheorem{proposition}{Proposition}[section]
\newtheorem{lemma}{Lemma}[section]
\theoremstyle{definition}
\newtheorem{remark}{Remark}[section]
\newtheorem{conj}{Conjecture}[section]
\def\now{%
\minute=\time%
\hour=\time \divide \hour by 60%
\hourMins=\hour \multiply\hourMins by 60%
\advance\minute by -\hourMins%
\zeroPadTwo{\the\hour}:\zeroPadTwo{\the\minute}%
}
\def\zeroPadTwo#1{\ifnum #1<10 0\fi#1}
\renewcommand{\cite}{\citet}
\def\^#1{\ifmmode {\mathaccent"705E #1} \else {\accent94 #1} \fi}
\def\~#1{\ifmmode {\mathaccent"707E #1} \else {\accent"7E #1} \fi}
\def\*#1{#1^\ast}
\edef\-#1{\noexpand\ifmmode {\noexpand\bar{#1}} \noexpand\else \-#1\noexpand\fi}
\def\>#1{\vec{#1}}
\def\.#1{\dot{#1}}
\def\atop{\@@atop}
\def\*#1{\mathscr{#1}}
\renewcommand{\leq}{\leqslant}
\renewcommand{\geq}{\geqslant}
\newcommand{\eq}{\eqref}
\def\t#1{\widetilde{#1}}
\newcommand{\IE}{\mathbbm{E}}
\newcommand{\IP}{\mathbbm{P}}
\newcommand{\Var}{\mathop{\mathrm{Var}}\nolimits}
\newcommand{\IR}{\mathbb{R}}
\def\be#1{\begin{equation*}#1\end{equation*}}
\def\ben#1{\begin{equation}#1\end{equation}}
\def\bes#1{\begin{equation*}\begin{split}#1\end{split}\end{equation*}}
\def\besn#1{\begin{equation}\begin{split}#1\end{split}\end{equation}}
\def\mid{\vert}
\def\beqn#1\eeqn{\begin{align}#1\end{align}}
\def\beq#1\eeq{\begin{align*}#1\end{align*}}
\def\E{{\IE}}
\def\P{{\IP}}
\def\d{\,\mathrm{d}}
\renewcommand\section{\@startsection {section}{1}{\z@}%
{-3.5ex \@plus -1ex \@minus -.2ex}%
{1.3ex \@plus.2ex}%
{\center\small\sc\mathversion{bold}}}
\def\subsection#1{\@startsection {subsection}{2}{0pt}%
{-3.5ex \@plus -1ex \@minus -.2ex}%
{1ex \@plus.2ex}%
{\bf\mathversion{bold}}{#1}}
\def\subsubsection#1{\@startsection{subsubsection}{3}{0pt}%
{\medskipamount}%
{-10pt}%
{\normalsize\itshape}{\kern-2.2ex. #1.}}
\def\blfootnote{\xdef\@thefnmark{}\@footnotetext}
\begin{document}

\title{Conditional central limit theorems for exponential random graphs}
\author{Xiao Fang$^1$, Song-Hao Liu$^2$, Zhonggen Su$^3$, Xiaolin Wang$^3$}
\date{\it $^1$The Chinese University of Hong Kong, $^2$Southern University of Science and Technology, $^3$Zhejiang University \\[2ex]  }
\maketitle

\noindent{\bf Abstract:} 
In this paper, we study the Exponential Random Graph Models (ERGMs) conditioning on the number of edges. 
In subcritical region of model parameters, we prove a conditional Central Limit Theorem (CLT) with explicit mean and variance for the number of two stars. This generalizes the corresponding result in the literature for the Erd\H{o}s--R\'enyi random graph. To prove our main result, we develop a new conditional CLT via exchangeable pairs based on the ideas of Dey and Terlov. Our key technical contributions in the application to ERGMs include establishing a linearity condition for an exchangeable pair involving two star counts, a local CLT for edge counts, as well as new higher-order concentration inequalities. Our approach also works for general subgraph counts, and we give a conjectured form of their conditional CLT.

\medskip

\noindent{\bf AMS 2020 subject classification:} 
60F05, 05C80
%60F05, 60F10, 62E17

\noindent{\bf Keywords and phrases:}  
Conditional central limit theorem, exchangeable pairs, exponential random graphs, higher-order concentration inequalities, local limit theorem, Stein's method

%Central limit theorem, Cram\'er-type moderate deviations, multivariate normal approximation, $p$-Wasserstein distance, Stein's method
%\tcb{re-ordered alphabetically}
%\begin{keyword}[class=AMS]
%\kwd[Primary ]{60F05,62E20} \kwd[; secondary ]{62L10}
%\end{keyword}

%{
%  \hypersetup{linkcolor=black}
%  \tableofcontents
%}

%We have
%\be{
%\sum_{i,j,k,l}\E \widetilde X_{ij} \t X_{jk} \t X_{kl}=\sum_{j,k} \E \t X_{jk} \t d_{j} \t d_{k},
%}
%where $\t d_j$ denotes the centered degree.
%Except for a probability $\sim e^{-\sqrt{n}}$, $\t d_j, \t d_{\rm{K}}= O(\sqrt{n})$. Given such $d_j$ and $d_{\rm{K}}$, we should have $\E [\t X_{jk} \mid d_j, d_{\rm{K}}]=o(1)$? and hence $\sum_{i,j,k,l}\E \widetilde X_{ij} \t X_{jk} \t X_{kl}=o(n^3)$.

\section{Introduction}

%\paragraph{MODEL DESCRIPTION:} 
Exponential random graph models (ERGMs) are frequently used as parametric statistical models in network analysis, especially in the sociology community.
They were suggested for directed networks by \cite{holland1981exponential} and for undirected networks by \cite{frank1986markov}.
A general development of the models is presented in \cite{wasserman1994social}. 
In this paper, we consider undirected dense ERGMs, where the resulting graphs generally contain $\Theta_p(n^2)$ edges, with
$n$ representing the number of vertices.
We do not consider sparse ERGMs (models that have $\Theta_p(n)$ edges), which are harder to study than their dense counterparts. For important discoveries about sparse ERGMs, see, for example, \cite{chatterjeeDembo2016} and \cite{Nicholas2024}.
%(see also \cite{chakraborty2024,Nicholas2024} for more details). more reference?
Alternative model specifications are also studied in the literature. For example, \cite{snijders2006new} proposed curved exponential random graph models which include statistics such as geometrically weighted degree counts. These more complicated models do not fall under our current investigation.

%\paragraph{MOTIVATION:} 
We study the limiting distribution of subgraph counts in the ERGM conditioning on the number of edges. This problem is motivated by the similar problem in the literature on the Erd\H{o}s--R\'enyi random graph; see, for example, \cite{janson1990,bresler2018optimal,dey2023}. The current problem is much more involved because of the non-trivial dependency between the edges in ERGM. In fact, even the central limit theorem (CLT) for the total number of edges was only established recently in \cite{fang2024}. The result established in the paper may be used to statistically test an ERGM given an observed number of edges (see \cref{rem:test}). 
%\cite{Bauerschmidt2024}.
Our study is a meaningful special case of conditional limit theorems of degenerate generalized $U$-statistics of weakly dependent random variables.
Our study is also remotely related to the study of which observable is causing the phase transition in a statistical physics model (see \cref{rem:phase}).

We refer to
\cite{bhamidi2011mixing} and \cite{chatterjee2013estimating} for the following formulation of the model.
Let $\mathcal{G}_n$ be the space of all simple graphs\footnote{In this paper, all the graphs considered are simple graphs  which are undirected and without self-loops or multiple edges.} on $n$ labeled vertices. 
Let $m\geq 1$ be a positive integer. Let $\beta=(\beta_1,\dots, \beta_m)$ be a vector of real parameters, and let $H_1,\dots, H_m$ be (typically small) simple graphs without isolated vertices. 
%For any graph $G\in \mcl{G}_n$, 
We use $\mathcal{V}(G)$ and $\mathcal{E}(G)$ to denote the vertex set and edge set of a graph $G$, respectively. For each graph $H_i$, let $\operatorname{Hom}(H_i,G)$ denote the number of homomorphisms of $H_i$ into $G$.
A homomorphism is defined as an injective mapping from the vertex set $\mathcal{V}(H_i)$ of $H_i$ to the vertex set $\mathcal{V}(G)$ of $G$, such that each edge in $H_i$ is mapped to an edge in $G$.
%, that is,  injective maps $V(H_i)\to V(G)$, where $V(H_i)$ and $V(G)$ are the vertex sets of $H_i$ and $G$ respectively, such that each edge in $H_i$ gets mapped to an edge in $G$. 
For instance, if $H_i$ is an edge, then $\text{Hom}(H_i,G)$ is equal to twice the number of edges in $G$. Similarly, if $H_i$ is a triangle, then $\text{Hom}(H_i,G)$ is equal to six times the number of triangles in $G$. Given $\beta=(\beta_1,\dots, \beta_m)$ and $H_1,\dots, H_m$, the ERGM assigns probability
\begin{equation}\label{eq:ERGM}
p_{\beta}(G)=\frac{1}{Z(\beta)}\exp\left\{ n^2 \sum_{i=1}^m \beta_i t(H_i, G)\right\}
\end{equation}
to each $G\in \mathcal{G}_n$, where 
\be{
t(H_i, G):=\frac{\text{Hom}(H_i,G)}{n^{|\mathcal{V}(H_i)|}}
}
denotes the homomorphism density, 
$|\cdot|$ denotes cardinality when applied to a set, and $Z(\beta)$ is a normalizing constant. The $n^2$ and $n^{|\mathcal{V}(H_i)|}$ factors in \cref{eq:ERGM} ensure a nontrivial large $n$ limit. In this paper, we always take $H_1$ to be an edge by convention ($H_2,\dots, H_m$ are graphs with at least two edges) and assume $\beta_2,\dots, \beta_m$ are positive ($\beta_1$ can be negative).
Note that if $m=1$, then \cref{eq:ERGM} is the Erd\H{o}s--R\'enyi model $G(n,p)$ where every edge is present with probability $p=p(\beta)=e^{2\beta_1}(1+e^{2\beta_1})^{-1}$, independent of each other. If $m\geq 2$, \cref{eq:ERGM} ``encourages" the presence of the corresponding subgraphs.

Let $v_i$ be the number of vertices in $H_i$ and $e_i$ be the number of edges in $H_i$, and
\begin{align}\label{varph1}
    \Phi_{\beta}(a):=\sum_{i=1}^m\beta_ie_ia^{e_i-1},\quad \varphi_{\beta}(a):=\frac{\exp(2\Phi_{\beta}(a))}{1+\exp(2\Phi_{\beta}(a))}.
\end{align}
The so-called subcritical region contains all the parameters $\beta=(\beta_1,\dots, \beta_m)$ such that 
\begin{align}\label{Sub}
\text{there is a unique solution $p := p_{\beta}$ to the equation} \     \varphi_{\beta}(a)=a
\end{align} in $(0, 1)$ and $\varphi_{\beta}^{\prime}(p)<1.$ 
We always use $p$ to denote the unique solution in the rest of the paper.
Furthermore, Dobrushin’s uniqueness region, which is a subset of the subcritical region, contains all the parameters such that 
\ben{\label{eq:Dob}
\Phi_{\beta}^{\prime}(1)<2.
} 

%\paragraph{MAIN RESULT:} 
For the ERGM \cref{eq:ERGM} in the subcritical region \cref{Sub}, 
a CLT for the total number of edges was recently proved by \cite{fang2024}. 
\cite{Sam2020} proved that a similar decomposition as the Hoeffding decomposition for multilinear forms of independent random variables holds for the ERGM in Dobrushin's uniqueness region.
Combining the previous two results, we infer that CLT holds for any subgraph counts under \cref{eq:Dob};
%because the number of edges is the leading terms in the Hoeffding decomposition of any subgraph counts; 
see \cite[Corollary~3.3]{Sam2020} for the case of triangle counts. 
%under \cref{eq:Dob} and \cite[Corollay~3.1]{fang2024} for the general case under \cref{Sub}. 

The situation is less clear if we consider subgraph counts in ERGMs \emph{conditioning} on the number of edges. As a natural first step, we consider two star (the graph with three vertices and two edges connecting them) counts.
We obtain the following result. Its proof is contained in \cref{sec:main}.
We use $d_{\mathcal{W}}(W,Z)$ to denote the Wassertein distance between the distributions of two random variables $W$ and $Z$, that is,
\be{
d_{\mathcal{W}}(W, Z):=\sup _{h:\left\|h^{\prime}\right\|_\infty \leqslant 1}|\IE h(W)-\IE h(Z)|,
}
where the sup is taken over all absolutely continuous functions $h: \IR\to \IR$ and $\|\cdot\|_\infty$ denotes the $L^\infty$ norm.

Let $B$ be a compact subset of the subcritical region \cref{Sub}. 
%such that $2-\Phi'_\beta(1)\geq \delta>0$. 
Let $n\geq 1$ be an integer. 
Let $\beta_{2n},\dots, \beta_{mn}$ be positive parameters and let $\widetilde p_n\in (0,1)$.
Let $\E_{(\beta_{1n},\beta_{2n},\dots, \beta_{mn})}Y_{ij}=\t p_n$, where $Y_{ij}$ is the edge indicator of a random graph with vertex set $\{1,\dots, n\}$ and the expectation is with respect to the ERGM \cref{eq:ERGM} with parameters $(\beta_{1n},\beta_{2n},\dots, \beta_{mn})$.

%Let $\beta_2,\dots, \beta_k$ be fixed positive parameters. Let $\{(\beta_1, \beta_2,\dots, \beta_k): \beta_1\in B_1\}$ be a closed subset of Dobrushin's uniqueness region. Let $\widetilde p_n\in (0,1)$, $n\geq 1$, be a sequence of numbers. Assume that for all sufficiently large $n$, there exists $\beta_{1n}\in B_1$ with $\E_{(\beta_{1n},\beta_2,\dots, \beta_k)}Y_{ij}=\t p_n$, where $Y_{ij}$ is the edge indicator of a random graph with vertex set $\{1,\dots, n\}$ following the ERGM \cref{eq:ERGM} with parameters $(\beta_{1n},\beta_2,\dots, \beta_k)$.

Let $G_n$ be a random graph on $n$ vertices following the ERGM \cref{eq:ERGM} with any value of $\beta_1$ that may be unknown and $\beta_2=\beta_{2n},\dots, \beta_m=\beta_{mn}$. Let $E_n$ and $V_n$ be the number of edges and two stars in $G_n$, namely, 
\be{
E_n=\sum_{1\leq i<j\leq n} Y_{ij}
}
and
\be{
V_n=\sum_{1\leq j<k\leq n}\sum_{i\ne j,k} Y_{ij}Y_{ik}.
}
%with vertex set $\{1,\cdots,n\}$ and edge set $(i,j)_{1\leq i\neq j\leq n}$. Assume that the parameters of the ERGM are in a closed subset of Dobrushin's uniqueness region. 
%and $p_n$ is the unique solution to \cref{Sub}. 
%Denote  be the
%Let $Y_{i,j}$ be the indicator of edge $(i,j)$, $E_n$ and $V_n$ be the number of edges and two stars in $G_n$, i.e. $E_n=\sum_{i<j}Y_{i,j}$ and $V_n=\sum_{i<j,k\neq i,j}Y_{i,k}Y_{j,k}$. 

\begin{theorem}\label{thm:twostar}
Let $N=n(n-1)/2$. 
Then, conditioning on $E_n/N=\widetilde p_n$, we have,
\ben{\label{eq:thm1}
d_{\mathcal{W}}\left(\frac{V_n -\mu_{ V_n}}{\sigma_{ V_n}},Z\right)\leq C_\varepsilon n^{-\frac{1}{2}+\varepsilon}
}
for any $\varepsilon>0$,
where $C_\varepsilon$ is a constant depending only on $\varepsilon$ and $B$, $Z\sim N(0,1)$ is a standard normal random variable,
\begin{align}
    \mu_{V_n}&=N(n-2)\widetilde p_n^2+\frac{2N\t q_n^2\sum_{l=2}^m\beta_{ln}s_l\t p_n^{e_l}}{1-2\t q_n\sum_{l=2}^m\beta_{ln} s_l \t p_n^{e_l-1}},\quad \t q_n=1-\t p_n, \label{eq:muv} \\ \sigma_{ V_n}^2&=\frac{Nn\t p_n^2\t q_n^2}{\left(1-2\t q_n\sum_{l=2}^m\beta_{ln}s_l\t  p_n^{e_l-1}\right)^2}, \label{eq:sigv}
\end{align}
and $e_l$ and $s_l$ are the numbers of edges and two stars, respectively, contained in $H_l$, $2\leq l\leq m$, in the definition of the ERGM \cref{eq:ERGM}.
\end{theorem}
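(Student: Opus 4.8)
The plan is to establish \cref{thm:twostar} by constructing a Glauber-type exchangeable pair for the ERGM and feeding it into the conditional CLT via exchangeable pairs developed earlier in the paper (following the ideas of Dey and Terlov). Concretely, sample $(I,J)$ uniformly from the $N$ pairs with $1\le I<J\le n$ and replace the edge indicator $Y_{IJ}$ by an independent draw $Y_{IJ}'$ from its conditional law given all other edges, producing a graph $G_n'$ with statistics $E_n'$ and $V_n'$; then $(G_n,G_n')$, and hence $\bklr{(V_n,E_n),(V_n',E_n')}$, is exchangeable. Writing $W=(V_n-\mu_{V_n})/\sigma_{V_n}$, we condition on $\{E_n=N\widetilde p_n\}$, and this conditioning is handled through the Fourier identity
\[
\mathbbm{1}(E_n=N\widetilde p_n)=\frac{1}{2\pi}\int_{-\pi}^{\pi}e^{-\mathrm{i}tN\widetilde p_n}\,e^{\mathrm{i}tE_n}\d t,
\]
so that, for a Stein solution $f$, the target $\IE[f'(W)-Wf(W)\mid E_n=N\widetilde p_n]$ becomes, after multiplying by $\IP(E_n=N\widetilde p_n)$, an oscillatory integral of $\IE[(f'(W)-Wf(W))e^{\mathrm{i}tE_n}]$, which we estimate by Stein's method of exchangeable pairs carrying the extra Fourier weight.

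The next step is the linearity condition. Resampling $(I,J)$ changes $V_n$ only through the two-stars incident to that pair, so $V_n'-V_n=(Y_{IJ}'-Y_{IJ})(d_I^{\setminus J}+d_J^{\setminus I})$, where $d_I^{\setminus J}$ counts neighbours of $I$ other than $J$. Taking the conditional expectation and using the identity $\sum_{I<J}Y_{IJ}(d_I^{\setminus J}+d_J^{\setminus I})=2V_n$ gives $\IE[V_n'-V_n\mid G_n]=\tfrac1N\bklr{\sum_{I<J}p_{IJ}(d_I^{\setminus J}+d_J^{\setminus I})-2V_n}$, where $p_{IJ}=\IP(Y_{IJ}=1\mid\text{rest})$ is, in the subcritical region, a logistic function of local subgraph counts with marginal $\approx\widetilde p_n$. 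Taylor-expanding $p_{IJ}$ to first order, the derivative of the ERGM weight in the edge $(I,J)$ contributes exactly the factor $2\widetilde q_n\sum_{l=2}^m\beta_{ln}s_l\widetilde p_n^{e_l-1}$, the source of the denominators in \cref{eq:muv,eq:sigv}. This yields a linearity condition $\IE[V_n'-V_n\mid G_n]=-\lambda\bklr{V_n-aE_n-b}+R$ with $\lambda\asymp n^{-2}$, an explicit affine function $aE_n+b$, and a remainder $R$. Requiring the recentred statistic to have vanishing conditional drift at $E_n=N\widetilde p_n$ identifies $\mu_{V_n}$, while $\IE[(V_n'-V_n)^2\mid G_n]$ evaluated at the conditioned value identifies $\sigma_{V_n}^2$; the term $aE_n$ is precisely the part of $V_n$ explained by $E_n$ that the conditioning annihilates, explaining the reduced degenerate scale $\sigma_{V_n}\asymp n^{3/2}$.

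Two analytic inputs drive the error analysis. First, a local CLT for $E_n$ under the ERGM, uniform over the $O(\sigma_{E_n})$ window around $N\widetilde p_n$, upgrading the edge CLT of \cite{fang2024}: it supplies $\IP(E_n=N\widetilde p_n)\asymp\sigma_{E_n}^{-1}$ together with quantitative decay of the characteristic function $\IE e^{\mathrm{i}tE_n}$ for $t$ bounded away from $0$, so the Fourier integral is concentrated near $t=0$ and converges at the right order. Second, higher-order concentration inequalities for low-degree polynomials of the weakly dependent edge variables, used to bound $R$, the fluctuations of $d_I,d_J$, and the fourth-moment terms such as $\IE[(W'-W)^4]$, all uniformly in $t\in[-\pi,\pi]$. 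Assembling these in the conditional CLT theorem produces the bound $C_\varepsilon n^{-1/2+\varepsilon}$, with the $\varepsilon$-loss originating from the concentration estimates.

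The main obstacle will be controlling every remainder uniformly in $t$ at the \emph{degenerate} scale: because conditioning on $E_n$ removes the order-$n^2$ linear fluctuation of $V_n$, the CLT is really for a second-order (degenerate $U$-statistic–type) quantity of size $\sigma_{V_n}\asymp n^{3/2}$, so each error must be shown smaller than this reduced scale rather than the unconditional one. In contrast to the Erd\H{o}s--R\'enyi case, the edge dependence obstructs an exact drift linearization, and the subcritical and Dobrushin hypotheses \cref{Sub,eq:Dob} are exactly what guarantee $\varphi_\beta'(p)<1$ and make the expansions convergent. Establishing the requisite higher-order concentration for these dependent variables, and proving the local CLT for $E_n$ with the characteristic-function control needed above, are the two genuinely new and most delicate steps.
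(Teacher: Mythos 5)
Your proposal assembles the same main ingredients as the paper --- the single-edge Glauber resampling exchangeable pair, a first-order Taylor expansion of the conditional edge probability producing the factor $2\t q_n\sum_{l=2}^m\beta_{ln}s_l\t p_n^{e_l-1}$, a local CLT for $E_n$, and higher-order concentration for polynomials of the dependent edge indicators --- and your identification of $\mu_{V_n}$ and $\sigma_{V_n}^2$ from the drift and quadratic variation of the pair matches the paper's Steps 2 and 3. The genuine divergence is in how the conditioning on $\{E_n=N\t p_n\}$ is executed. You propose the Fourier identity $\mathbbm{1}(E_n=N\t p_n)=\frac{1}{2\pi}\int_{-\pi}^{\pi}e^{-\mathrm{i}tN\t p_n}e^{\mathrm{i}tE_n}\d t$ and an oscillatory-integral Stein estimate; the paper instead follows Dey and Terlov and never needs the characteristic function of $E_n$ away from $t=0$: since the pair satisfies $\Delta E_n\in\{-1,0,1\}$, the symmetric weight $\psi(y',y)=\mathbbm{1}_{y'=k}\mathbbm{1}_{y'-y=1}+\mathbbm{1}_{y=k}\mathbbm{1}_{y'-y=-1}$ makes $\IE[(F(W')-F(W))\psi(Y',Y)]=0$, and all that is required beyond the moment conditions is control of the adjacent-level ratio $r_k=p_{k-1}/p_k$ and of differences $\IE[U(\mathbbm{1}_{Y=k}-\mathbbm{1}_{Y=k-1})]$ (\cref{le:d}), both supplied by the local CLT \cref{lem:lclt}. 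Your route buys a more classical-looking argument, but its central step is left entirely unexecuted: to run Stein's method under the weight $e^{\mathrm{i}tE_n}$ you must show the exchangeable-pair identity survives the oscillation, control cross terms such as $\IE[Wf(W)(e^{\mathrm{i}tE_n'}-e^{\mathrm{i}tE_n})]$ uniformly in $t$, and prove decay of $\IE e^{\mathrm{i}tE_n}$ for $|t|$ bounded away from $0$ for a dependent field --- none of which follows from the Kolmogorov bound \cref{Kolo} or from the $D(E_n)\le Cn^{-2}$ smoothness estimate the paper actually proves, and it is not evident this delivers the $n^{-1/2+\varepsilon}$ rate.

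A second, more concrete gap: your linearity condition is stated for the full drift $\IE[V_n'-V_n\mid G_n]$, but the conditional CLT (in either formulation) requires the refined quantities $M_{1,\pm}=\IE[(V_n'-V_n)\mathbbm{1}_{\Delta E_n=\pm1}\mid V_n,E_n]$ \emph{separately}, with coefficients $a_\pm$ summing to $1$; it is precisely the difference between the $+$ and $-$ channels that encodes the coupling between the $V_n$-increment and the $E_n$-increment which the conditioning annihilates, and which produces the degenerate scale $\sigma_{V_n}\asymp n^{3/2}$. The sum $M_{1,+}+M_{1,-}$ alone determines $\mu_{V_n}$ (via $\IE[M_{1,+}+M_{1,-}]=0$) but not the conditional variance, so your plan as written would need to be upgraded to the $\pm$-split form before either your Fourier device or the paper's \cref{P1} could be applied.
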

\begin{remark}
Note that the centering and normalization of $V_n$ in \cref{thm:twostar} are functions of the given value $\t p_n$ of $E_n/N$ and can be explicitly computed. This is useful if we do not know the asymptotic value of $\t p_n$. The explicit expressions in \cref{eq:muv,eq:sigv} come as by-products of the proof (see \cref{IEV} and \cref{m2}). The error bound \cref{eq:thm1} generalizes the corresponding result for the Erd\H{o}s--R\'enyi random graph $G(n,p)$, except for the arbitrarily small $\varepsilon$ error rate loss. Our assumption that the parameters of the ERGM lie in $B$ reduces to the corresponding assumption that $p$ is bounded away from 0 and 1 in $G(n,p)$.
See \cite[Lemma~4.4]{dey2023} and \cite[Theorem~12 with $d=2$]{bresler2018optimal}.

%Therefore, the condition of \cref{thm:twostar} can also be rephrased as: for the parameters $\beta_2, \dots, \beta_k$ in the ERGM \cref{eq:ERGM} and the asymptotic value $0<p<1$ of $E_n/N$, there exists some real number $\beta_1'$ such that $\beta_1', \beta_2, \dots, \beta_k$ are in Dobrushin's uniqueness region and $p$ is the solution to \cref{Sub} with $\beta=(\beta_1', \beta_2,\dots, \beta_k)$. 

%The condition $|\t p_n-p|=O(1/n)$ is imposed because of two reasons in our proof: 1. This is the typical range of fluctuation of $E_n$ and a local CLT implies that $r_k$ in \cref{P1} is bounded away from zero and infinity; 2. This condition ensures $|p_n-p|=O(1/n)$ ($p_n$ is defined in \cref{sec:main}) to be able to control various error terms in the proof of \cref{thm:twostar}. The optimality of this condition deserves further investigation.
\end{remark}

\begin{remark}\label{rem:test}
\cite{bresler2018optimal} showed that given an observed number of edges, two star counts can be used to test the Erd\H{o}s--R\'enyi random graph against an ERGM. \cref{thm:twostar} provides the asymptotical power of the test in subcritical region. Moreover, \cref{thm:twostar} can also be used to distinguish two ERGMs.
\end{remark}

\begin{remark}
For the special case of two star ERGM, where $m=2, \beta_2>0$ and $H_2$ is a two star in \cref{eq:ERGM}, \cite[Theorem~1.6]{mukherjee2023statistics} proved a CLT for edge-corrected two star counts.
Their result is valid for any parameters, including the critical case. However, their method only applies to the two star ERGM and, moreover, one can not directly infer a conditional CLT from their result.
\end{remark}

%Let $W$ be the number of edges in exponential random graphs and $Z\sim N(\mu,\sigma^2)$. Let $\widetilde{p}=N^{-1}\IEW$. Such defined $\widetilde{p}$ is close to $p$, which is the solution of \eqref{Sub}. Existing literature shows $|p-\widetilde{p}|=O(n^{-1/2})$. However, this accuracy is still too weak for the conditional CLT in this article. We modify the details of their proofs to obtain $|p-\widetilde{p}|=O(n^{-1})$ in the subcritical region.

%In this paper, we aim to obtain a conditional central limit theorem for two stars in exponential random graphs, based on \cite{dey2023}. To this end, we need to control the order of various quantities in that paper. In the process, we verify the local central limit theorem of W, give more precise error estimates of $p$ and $\widetilde{p}$, derive some linear conditions for two stars, and as a by-product, we give the expectation and variance of two stars.

To prove \cref{thm:twostar}, we use the approach of \cite{dey2023} who developed Stein's method of exchangeable pairs for proving conditional CLTs. An alternative approach to prove \cref{thm:twostar} would be to study the conditional graph directly. However, one difficulty of this alternative approach is that we need conditional higher-order concentration inequalities, which has not been developed yet.

In \cref{P1}, we provide a streamlined and modified version of the main result of \cite{dey2023} in dimension one. The main ingredient in applying the result to ERGMs is obtaining an approximate linearity condition (as a by-product, we obtain the asymptotic mean and variance of conditional two star counts). This involves higher-order concentration inequalities (the connection between higher-order concentration inequalities and the CLT was realized by \cite{fang2024}) and a bound on $\widetilde p - p$ (see \cref{L2}). 
Another ingredient is a local central limit theorem (LLT). The latter two are of independent interest and we state them as \cref{L2,lem:lclt}.

%In this paper, unless otherwise specified, we use $C$ to denote positive constants depending only on the closed subset $B$ of Dobrushin's uniqueness region given in \cref{thm:twostar}.The value of $C$ may differ from line to line. 

%Moreover, the Big $O$ notation may involve constants depending on the parameters of the ERGM.

\begin{proposition}\label{L2} 
Recall $E_n$ is the number of edges in the ERGM \cref{eq:ERGM} and $N=n(n-1)/2$.
    Let $\widetilde p:=\E(E_n)/N$ be the average edge density. In any compact subset $B$ of the subcritical region \cref{Sub},
    %such that $1-\varphi_\beta'(p)\geq \delta>0$, 
    we have
    \begin{equation}\label{eqmu}
        |p-\t p|\leq C n^{-1},
        \end{equation}
where $C$ is a constant depending only on $B$.
\end{proposition}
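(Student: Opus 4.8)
The plan is to show that $\widetilde p$ is an \emph{approximate} fixed point of the map $\varphi_\beta$, namely $\widetilde p=\varphi_\beta(\widetilde p)+\bigo(n^{-1})$, and then to convert this into \eqref{eqmu} by a quantitative stability argument at the exact fixed point $p$. By the exchangeability of the edges under the ERGM we have $\widetilde p=\IE Y_{12}$, so it suffices to analyse the conditional law of a single edge given all the others. Writing $Y_{-12}$ for the configuration of all edges except $\{1,2\}$ and comparing the two graphs that agree with $G$ off $\{1,2\}$, the model \eqref{eq:ERGM} gives
\be{
\IE(Y_{12}\mid Y_{-12})=\psi(2\Delta),\qquad \Delta:=\tfrac12\sum_{l=1}^m\beta_l\, n^{2-v_l}\,\partial_{12}\Hom(H_l,G),\qquad \psi(x):=\frac{\e^{x}}{1+\e^{x}},
}
where $\partial_{12}\Hom(H_l,G)$ counts the injective homomorphisms of $H_l$ into $G$ that send some edge of $H_l$ to $\{1,2\}$. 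Thus $\widetilde p=\IE[\psi(2\Delta)]$, and since $\varphi_\beta(a)=\psi(2\Phi_\beta(a))$, the whole problem reduces to estimating the mean and fluctuations of the local statistic $\Delta$.

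Next I would establish the two moment estimates $\IE\Delta=\Phi_\beta(\widetilde p)+\bigo(n^{-1})$ and $\Var(\Delta)=\bigo(n^{-1})$. For the first, a direct count shows that each of the $e_l$ edges of $H_l$ can be sent to $\{1,2\}$ in two orientations while the remaining $v_l-2$ vertices are placed in $(n-2)\cdots(n-v_l+1)=n^{v_l-2}(1+\bigo(n^{-1}))$ ways, so that, up to $\bigo(n^{-1})$ discreteness corrections, $\tfrac12 n^{2-v_l}\IE\,\partial_{12}\Hom(H_l,G)=e_l\,\IE[\prod Y]$, where the product runs over the $e_l-1$ remaining edges of a generic embedding. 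Replacing $\IE[\prod Y]$ by $\widetilde p^{\,e_l-1}$ costs only a joint-covariance error, which is $\bigo(n^{-1})$ in the subcritical regime by the weak dependence of the edges; this is exactly where the higher-order concentration estimates of the paper (and of \cite{fang2024}) enter. Summing over $l$ gives $\IE\Delta=\sum_l\beta_l e_l\widetilde p^{\,e_l-1}+\bigo(n^{-1})=\Phi_\beta(\widetilde p)+\bigo(n^{-1})$, and the same concentration input yields $\Var(\Delta)=\bigo(n^{-1})$, since $\Delta$ is, up to discreteness corrections, an average of $\Theta(n)$ weakly dependent indicators. A second-order Taylor expansion of the bounded smooth function $\psi$ then gives $\widetilde p=\IE[\psi(2\Delta)]=\psi(2\IE\Delta)+\bigo(\Var(\Delta))=\varphi_\beta(\widetilde p)+\bigo(n^{-1})$, the desired approximate fixed-point relation.

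Finally I would convert this into \eqref{eqmu} by studying $g(a):=\varphi_\beta(a)-a$. On $(0,1)$ one has $g(0^+)=\psi(2\beta_1)>0$ and $g(1)<0$, and in the subcritical region \eqref{Sub} the unique zero $p$ satisfies $g'(p)=\varphi_\beta'(p)-1<0$; hence $g>0$ on $(0,p)$ and $g<0$ on $(p,1)$, and $|g|$ is bounded below by a positive constant outside any fixed neighbourhood of $p$. Since $p=p_\beta$, $g'(p)$, the neighbourhood, and this lower bound all vary continuously with $\beta$, they can be taken uniform over the compact set $B$. The relation $g(\widetilde p)=\bigo(n^{-1})$ therefore forces $\widetilde p$ into the neighbourhood of $p$ on which $g$ is strictly monotone once $n$ is large, and there $|g(\widetilde p)|=|g(\widetilde p)-g(p)|\geq c\,|\widetilde p-p|$ with $c:=\inf|g'|>0$; combining with $|g(\widetilde p)|=\bigo(n^{-1})$ gives $|\widetilde p-p|\leq Cn^{-1}$. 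Finitely many small $n$ are absorbed by enlarging $C$, since $|\widetilde p-p|\leq 1$ holds trivially.

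The main obstacle is the moment control in the second step, specifically the covariance bound $\IE[\prod Y]-\widetilde p^{\,e_l-1}=\bigo(n^{-1})$ and the variance bound $\Var(\Delta)=\bigo(n^{-1})$: these encode that the edges of a subcritical ERGM are only weakly dependent, and are precisely where the higher-order concentration inequalities are needed. The stability step is comparatively routine, its only delicate point being the uniformity of all constants over the compact parameter set $B$, which follows from continuity and the defining inequality $\varphi_\beta'(p)<1$ of \eqref{Sub}.
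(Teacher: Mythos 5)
Your argument is correct and arrives at the same bound, but it closes the estimate by a genuinely different route in its second half. The common core is the identity $\widetilde p=\IE\,\psi(\partial_{12}H(Y))$ with $\psi(x)=e^x/(1+e^x)$: the paper derives this through the generator/Stein-equation formalism for the $G(n,p)$ Glauber dynamics (its coupling lemma shows $\Delta_s f_h=N$, which cancels the $1/N$ in the generator, so for the edge-count functional the Stein identity collapses to exactly the tower-property identity you invoke directly). The divergence is in how the identity is exploited. The paper Taylor-expands $\psi$ around the deterministic point $2\Phi_\beta(p)$; the quadratic remainder then involves $\IE(\partial_sH-2\Phi_\beta(p))^2$, whose control forces it to import the a priori rate \cref{eq:slowrate} from Reinert--Ross, after which the linear term produces the self-consistent equation $(1-\varphi_\beta'(p))N(\widetilde p-p)=O(n)$, solved by dividing by $1-\varphi_\beta'(p)>0$. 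You instead expand around the mean of the random exponent, so you need only $\Var(\partial_{12}H)=O(n^{-1})$ and no a priori rate; the price is that you obtain the approximate fixed-point relation $\varphi_\beta(\widetilde p)-\widetilde p=O(n^{-1})$ in $\widetilde p$ alone and must supply the global stability analysis of $g(a)=\varphi_\beta(a)-a$ (sign change on $(0,1)$, unique transversal zero, uniformity over $B$ via compactness and continuity of $\beta\mapsto p_\beta$) before the mean-value estimate applies. Both routes hinge on $\varphi_\beta'(p)<1$; yours is self-contained modulo the variance bound, while the paper's is shorter once the $n^{-1/2}$ bound is granted. Your two flagged inputs are indeed the only non-elementary ones, and both are available in the subcritical region: $\IE[\prod Y]-\widetilde p^{\,e_l-1}=O(n^{-1})$ is \cref{eq:GN84} (Ganguly--Nam), and $\Var(\partial_{12}H)=O(n^{-1})$ is the $L^2$ case of \cref{LH} (or follows from the covariance bounds $\Cov(Y_g,Y_h)=O(n^{v(g\cap h)-2})$); to make the proof complete you should cite these explicitly rather than gesture at higher-order concentration.
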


\begin{remark}
We can infer the bound 
\ben{\label{eq:slowrate}
|p-\widetilde{p}|\leq C/\sqrt{n}
}
from the main result of \cite{reinert2019} in the subcritical region. However, this accuracy is too weak for proving the conditional CLT in this paper. We apply Stein's method in a different way from \cite{reinert2019} to obtain \cref{L2}. See \cref{sec:lem1}. 
\cref{L2} addresses \cite[Conjecture~2.1]{winstein2025} in the subcritical region.
We also remark that as another application of \cref{L2}, we can remove the triangle-free condition in \cite[Theorem~3.1]{ding2024second}.
\end{remark}
\begin{remark}\label{rem:cstar}
In fact, we can obtain the exact constant of $\t p-p $ at the $n^{-1}$ order in the subcritical region, which gives the asymptotic mean in the CLT for the number of edges proved in \cite{fang2024}. Surprisingly, it requires the asymptotic expectations of the numbers of two stars and triangles we derive in this paper. In \cref{append}, we compute $c^*$ explicitly
%which only depends on $\beta$ and $\{H_1,\ldots,H_l\}$ 
such that %denoting the edge set of $H_l$ by $s_{l_1},\cdots,s_{l e_l}$, $1\leq l\leq m$, in Dobrushin’s uniqueness region, we have
     \begin{align*}
        |\widetilde{p}-p-c^* n^{-1}|\leq C n^{-3/2}.
    \end{align*}
%where $p$ is the solution to \cref{Sub}. 
 %   $v(H_l\backslash s_{lj})$ and $t(H_l\backslash s_{lj})$ are the numbers of two stars and triangles in graph $H_l\backslash s_{lj}$, respectively.
\end{remark}

To state the next proposition, 
recall $E_n$ is the number of edges in the ERGM \cref{eq:ERGM}. Let 
\ben{\label{eq:mun}
\mu_n=\E (E_n),\quad \sigma_n^2=\frac{Np(1-p)}{1-\sum_{i=2}^m \beta_i e_i(e_i-1) 2p^{e_i-1}(1-p)}.
}
Let $Z_{\mu_n, \sigma_n^2}\sim N(\mu_n, \sigma_n^2)$ be a normal random variable with mean $\mu_n$ and variance $\sigma_n^2$. Let $Z_{\mu_n, \sigma_n^2}^{(d)}$ be a discretized normal random variable supported on integers and, for any integer $k$,
\begin{align}\label{Zd}
    \P(Z_{\mu_n, \sigma_n^2}^{(d)}=k)=\P(k-\frac{1}{2}<Z_{\mu_n, \sigma_n^2} \leq k+\frac{1}{2}).
\end{align}
Let $d_{\rm{K}}$ and $d_{\rm{loc}}$ denote the Kolmogorov (maximum absolute difference between distribution functions) and local (maximum absolute difference between probability mass functions) metric, respectively. 
\cite{fang2024} proved that in the subcritical region, we have $\sigma_n^2\asymp n^2$ and
\ben{\label{Kolo}
d_{\rm{K}} (E_n, Z_{\mu_n, \sigma_n^2}^{(d)})\leq C n^{-1/4},
}
where $C$ is a constant depending only on the parameters of the ERGM.
We strengthen the result to be

\begin{proposition}\label{lem:lclt}
In any compact subset $B$ of the subcritical region \cref{Sub},
%such that $1-\varphi_\beta'(p)\geq \delta>0$, 
we have
\ben{\label{lclt}
d_{\rm{loc}}(E_n, Z_{\mu_n, \sigma_n^2}^{(d)})\leq C n^{-9/8},
}
where $C$ is a constant depending only on $B$.
\end{proposition}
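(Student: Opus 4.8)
The plan is to pass to characteristic functions and bound the local distance by a Fourier integral. Since $E_n$ and $Z_{\mu_n,\sigma_n^2}^{(d)}$ are both integer valued, their mass functions are recovered by inversion on $[-\pi,\pi]$, so that for every integer $k$,
\[
\Big|\P(E_n=k)-\P(Z_{\mu_n,\sigma_n^2}^{(d)}=k)\Big|\le \frac{1}{2\pi}\int_{-\pi}^{\pi}\big|\E e^{itE_n}-\E e^{itZ_{\mu_n,\sigma_n^2}^{(d)}}\big|\,\mathrm{d}t .
\]
Taking the supremum over $k$, it suffices to bound the right-hand side by $Cn^{-9/8}$. I would split the integral at a threshold $\delta=\delta_n$ into a low-frequency part $|t|\le\delta$ and a high-frequency part $\delta<|t|\le\pi$, and optimise $\delta$ at the end.

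For the low-frequency part I would run Stein's method of exchangeable pairs at the level of the characteristic function, using the one-edge resampling pair for $E_n$: choose a uniformly random edge, resample its indicator from the conditional law given the rest, and call the result $E_n'$. Writing $\psi(t)=\E e^{it(E_n-\mu_n)}$ and using $\E[e^{itE_n'}-e^{itE_n}]=0$, the resampling produces the identity
\[
(e^{-it}-1)\,\E\Big[e^{itE_n}\textstyle\sum_e Y_e(1-p_e)\Big]+(e^{it}-1)\,\E\Big[e^{itE_n}\textstyle\sum_e (1-Y_e)p_e\Big]=0,
\]
where $p_e=\P(Y_e=1\mid Y_{-e})$. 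Replacing each $p_e$ by the common value $p$ turns this into a closed first-order linear ODE whose solution is the Gaussian transform $e^{it\mu_n-\sigma_n^2t^2/2}$; the fluctuations $p_e-p$ and the gap $\widetilde p-p$ enter only through error terms, which I would control by the higher-order concentration inequalities together with \cref{L2}. After the rescaling $t=\tau/\sigma_n$ (recall $\sigma_n\asymp n$), the Jacobian contributes a factor $\sigma_n^{-1}\asymp n^{-1}$, which is why the final exponent exceeds $1$; a relative characteristic-function error of order $n^{-1/8}$ on this range then yields the target contribution $n^{-9/8}$. The same expansion shows that $\E e^{itZ_{\mu_n,\sigma_n^2}^{(d)}}$ matches the Gaussian transform up to the usual discretisation correction, which is of smaller order.

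For the high-frequency part the discretised-normal term is harmless: since $\sigma_n\asymp n$, one has $|\E e^{itZ_{\mu_n,\sigma_n^2}^{(d)}}|\le C e^{-c\sigma_n^2\delta^2}$ on $\delta<|t|\le\pi$ up to periodisation corrections, which is negligible once $\delta\gg (\log n)/n$. The real work is to show that $|\E e^{itE_n}|$ is integrably small there. I would again use the resampling identity: at leading order it gives $\frac{d}{dt}\log|\psi(t)|=\mathrm{Re}\,R(t)$ with $\mathrm{Re}\,R(t)\le 0$ and, crucially, $\mathrm{Re}\,R(t)$ comparable to $-N$ and bounded away from $0$ uniformly on any fixed subinterval of $(0,\pi)$ — this uses that on the compact set $B$ the conditional probabilities $p_e$ stay in $[c,1-c]$, so no period-two degeneracy occurs at $t=\pi$. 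Hence $|\E e^{itE_n}|$ is exponentially small throughout $[\delta,\pi]$, with ample room below $n^{-9/8}$, provided the error terms in the identity can be kept subdominant to this decay for all $t$ up to $\pi$.

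The main obstacle is precisely this last point: controlling the error terms $\E[e^{itE_n}\sum_e Y_e(p-p_e)]$ and their analogues \emph{uniformly} in $t$, not just for small $t$. These are covariances of the oscillatory weight $e^{itE_n}$ with linear statistics of the conditional fields $p_e$, and bounding them at the strength needed to survive up to $t=\pi$ is exactly what the new higher-order concentration inequalities are designed for. I expect the final exponent $9/8$ to emerge from optimising the threshold $\delta$ against the strength of these concentration estimates: lowering $\delta$ shrinks the low-frequency error but enlarges the high-frequency range on which the concentration bounds must hold, and balancing the two produces $n^{-9/8}$ rather than the weaker rate implicit in \eqref{Kolo}.
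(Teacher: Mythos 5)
Your Fourier-inversion plan is a genuinely different route from the paper, which does not touch characteristic functions at all. The paper combines the already-known Kolmogorov bound \cref{Kolo} with a Landau--Kolmogorov-type smoothing inequality of R\"ollin and Ross, $d_{\rm{loc}}(U,V)\leq c\,d_{\rm{K}}(U,V)^{1/2}[D(U)+D(V)]^{1/2}$, where $D$ is the summed absolute second difference of the mass function. The exponent $9/8$ is then purely arithmetic: $d_{\rm{K}}\leq Cn^{-1/4}$ and $D(E_n)+D(Z_{\mu_n,\sigma_n^2}^{(d)})\leq Cn^{-2}$ give $(n^{-1/4})^{1/2}(n^{-2})^{1/2}=n^{-9/8}$. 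The only real work is the bound $D(E_n)\leq Cn^{-2}$, obtained from three consecutive steps of the Glauber dynamics via variance and covariance estimates (positive association plus Newman's covariance inequality), with no oscillatory integrals anywhere.

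As written, your proposal has two genuine gaps. First, in the low-frequency part, replacing each $p_e$ by the common value $p$ in the resampling identity does not produce the Gaussian transform with variance $\sigma_n^2$: it produces the ODE of a binomial-type variable with variance $Np(1-p)$. The denominator $1-\sum_{i=2}^m\beta_ie_i(e_i-1)2p^{e_i-1}(1-p)$ in \cref{eq:mun} comes precisely from the leading-order covariance between $e^{itE_n}$ and $\sum_e(p_e-p)$, so the fluctuations $p_e-p$ cannot be relegated to error terms; they must be resolved to first order, which amounts to redoing the entire exchangeable-pair analysis behind \cref{Kolo} at the characteristic-function level. Second, the exponential smallness of $|\E e^{itE_n}|$ on all of $[\delta,\pi]$ is asserted rather than proved, and you rightly flag it as the main obstacle: for dependent edge indicators there is no off-the-shelf estimate giving integrable decay up to $t=\pi$, and the higher-order concentration inequalities of the paper control $L^r$ norms of polynomial statistics, not covariances against the oscillatory weight $e^{itE_n}$ uniformly in $t$. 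Relatedly, your heuristic that $9/8$ emerges from optimising the threshold $\delta$ does not correspond to any identifiable balance in your scheme; in the paper the exponent is forced by the interpolation inequality and would improve automatically if the $n^{-1/4}$ in \cref{Kolo} were improved. The lesson is that the smoothing-inequality route recycles the known Kolmogorov bound and reduces the local CLT to a second-difference estimate needing only covariance bounds, thereby sidestepping exactly the uniform-in-$t$ control that your plan cannot supply.
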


Our approach works for conditional CLT for general subgraph counts. However, the case of two star is already very technical. Therefore, for the general case, we only provide a conjecture based on our preliminary computations sketched in \cref{sec:multi}. 

In this paper, we use $K$ to denote  simple graphs without isolated vertices and for any $K$, we use $\t K$ to denote the number of (not necessarily induced) copies of $K$ in $G_n$ except that each edge
indicator $Y_s$, is replaced by $\t Y_s:=Y_s-\t p_n$.
For example, if $K$ is an edge, then $\t K=\sum_{1\leq i<j\leq n}\t Y_{ij}$; if $K$ is a two star, then $\t K=\sum_{1\leq j<k\leq n}\sum_{i\ne j,k} \t Y_{ij}\t Y_{ik}$.
We also denote by $v(K)$ and $e(K)$ the number of vertices and edges of $K$, respectively.
\begin{conj}\label{conj1}
Let $H$ be a fixed %,(not need?) connected
 graph containing $t$ triangles and $s$ two stars. Under the same conditions as in \cref{thm:twostar}, with $F_n$ denoting the number of copies of $H$ in $G_n$, we have
\be{
\frac{F_n- \mu_F}{\sigma_F}\to N(0,1)\quad \text{in distribution},
}
where 
\begin{align}\label{eq:muf}
    \mu_F&=\frac{(n)_{(v(H))} }{\operatorname{Aut}(H)}\t p^{e(H)}+ \left(\frac{(n-3)_{(v(H)-3)}}{\operatorname{Aut}(H)}\right)\big(2s\t p^{e(H)-2}\mu_{\t V}+6t\t p^{e(H)-3}\mu_{\t \triangle}\big),\\\label{eq:sigmaf}
\sigma_F^2&=\left(\frac{(n-3)_{(v(H)-3)}}{\operatorname{Aut}(H)}\right)^2\left(4s^2\t p^{2e(H)-4}\sigma^{2}_{V} +36t^2\t p^{2e(H)-6}\sigma^{2}_{\t \triangle }\right),
\end{align}
 $(n)_{(k)}=n(n-1)\cdots (n-k+1),$ $\operatorname{Aut}(K)$ denotes the number of automorphisms of $K$
to itself and and $\t \triangle :=\sum_{i<j<k}\t Y_{ik}\t Y_{jk}\t Y_{ij}$. The values of $\mu_{\t V}, \sigma_{V}, \mu_{\t\triangle}, \sigma_{\t \triangle}$ are in \cref{muv,sigmav,IET',sigmat}, respectively.
\end{conj}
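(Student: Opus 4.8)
The plan is to reduce \cref{conj1} to a two-dimensional joint conditional CLT for the pair $(\t V,\t\tr)$ via a centered multilinear expansion, and then to establish that joint CLT with the same exchangeable-pairs machinery that proves \cref{thm:twostar}. Writing $F_n=\operatorname{Aut}(H)^{-1}\sum_{\phi}\prod_{\{u,v\}\in\mathcal{E}(H)}Y_{\phi(u)\phi(v)}$ with $\phi$ ranging over injections and substituting $Y_s=\t p_n+\t Y_s$, the product expands over subsets $S\subseteq\mathcal{E}(H)$. Collecting the embeddings that realize a fixed isomorphism type $K$ of $S$ yields
\be{
F_n=\sum_{K}c_K(n)\,\t p_n^{\,e(H)-e(K)}\,\t K,
}
where $K$ runs over subgraphs of $H$, $c_K(n)\asymp n^{v(H)-v(K)}$ counts the extensions of a copy of $K$ to a copy of $H$, and $\t K$ is the centered $K$-count. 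The term $S=\emptyset$ produces the deterministic leading mean $(n)_{(v(H))}\operatorname{Aut}(H)^{-1}\t p_n^{e(H)}$ in \cref{eq:muf}.

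The second step is the order bookkeeping that isolates two-stars and triangles. Conditioning on $E_n=N\t p_n$ forces $\sum_{i<j}\t Y_{ij}=0$, so the single-edge ($v(K)=2$) term vanishes \emph{exactly}. Granting the variance estimate $\Var(\t K)\asymp n^{v(K)}$ for connected $K$---which in the ERGM is exactly where the higher-order concentration inequalities and covariance bounds developed for the two-star case must be adapted---the fluctuation of the $K$-term is of order $n^{v(H)-v(K)}\cdot n^{v(K)/2}=n^{v(H)-v(K)/2}$. Hence the connected $v(K)=3$ types, namely two-stars and triangles, dominate at order $n^{v(H)-3/2}$, while every connected $K$ with $v(K)\geq 4$ and every disconnected $K$ contributes fluctuation $O(n^{v(H)-2})$ or feeds only into the mean through concentration. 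This pins down the coefficients $2s\t p_n^{e(H)-2}$ and $6t\t p_n^{e(H)-3}$ and, after inserting the conditional means $\mu_{\t V}$ and $\mu_{\t\tr}$, reproduces \cref{eq:muf}.

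It then remains to prove a conditional CLT for the linear statistic $W=2s\t p_n^{e(H)-2}\t V+6t\t p_n^{e(H)-3}\t\tr$. Here I would apply \cref{P1} to $W$ using the edge-resampling exchangeable pair of \cref{thm:twostar}: the required inputs are (i) the approximate linearity condition $\E[W-W'\mid G_n]\approx-\lambda(W-\E W)$, now verified simultaneously for the two-star and triangle parts up to higher-order-concentration errors, (ii) the local limit theorem \cref{lem:lclt} for $E_n$ together with the density bound \cref{L2}, used exactly as in the two-star argument to pass from the unconditional exchangeable-pair identities to the conditioned law, and (iii) the estimate $\Cov(\t V,\t\tr)=o(\sigma_V\sigma_{\t\tr})$, which makes the two contributions asymptotically independent and turns $\Var(W)$ into the sum $4s^2\t p^{2e(H)-4}\sigma_V^2+36t^2\t p^{2e(H)-6}\sigma_{\t\tr}^2$ of \cref{eq:sigmaf}.

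The main obstacle is precisely input (i) for the triangle count: because a triangle couples three edges, the remainder terms produced when one edge is resampled are genuinely harder to concentrate than in the two-star case, so a new family of higher-order concentration inequalities tailored to $\t\tr$ (and to the coupling between $\t V$ and $\t\tr$) is needed. A secondary difficulty is the \emph{uniform} negligibility, after conditioning, of the whole $O(n^{v(H)-2})$ family of higher-vertex and disconnected subgraph terms, each of which must be controlled by a concentration bound rather than by a mean computation alone.
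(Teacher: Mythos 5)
Your decomposition of $F_n$ over centered subgraph counts, the identification of the connected $v(K)=3$ terms (two stars and triangles) as the dominant fluctuations, and the reliance on \cref{lem:lclt}, \cref{L2} and the estimate $\Cov(\t V,\t\triangle)=o(\sigma_V\sigma_{\t\triangle})$ all coincide with the computations sketched in \cref{sec:multi} (the paper records $\Cov(\t V,\t\triangle)=O(n^{5/2})$ in \cref{covtvtt}; note also that \cref{LI} only gives $\t K-\E\t K=O_r(n^{v/2}+n^{v-2})$, weaker than your claimed $\Var(\t K)\asymp n^{v(K)}$ for $v\geq 5$, but still enough to confine every $v(K)\geq 4$ term to order $n^{v(H)-2}=o(\sigma_F)$).

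The genuine gap is in your input (i): you cannot apply the one-dimensional \cref{P1} to the linear combination $W=2s\t p^{e(H)-2}\t V+6t\t p^{e(H)-3}\t\triangle$ when $H$ contains a triangle, because the two components contract at \emph{different} rates under the edge-resampling dynamics. The computation in \cref{sec:multi} gives the joint linearity condition
\begin{equation*}
M_{1,+}(\widetilde{\boldsymbol{W}},\t E)+M_{1,-}(\widetilde{\boldsymbol{W}},\t E)=-\Bigl(\operatorname{diag}(3/N,\lambda)\bigl(\widetilde{\boldsymbol{W}}-\E\widetilde{\boldsymbol{W}}\bigr)+\text{remainder}\Bigr),
\end{equation*}
with $\lambda=2\bigl(1-2\t q\sum_{l=2}^m\beta_ls_l\t p^{e_l-1}\bigr)/N\neq 3/N$ (equality would force $\t q\sum_l\beta_ls_l\t p^{e_l-1}=-1/4$, impossible since $\beta_l>0$). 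Hence for any scalar $\lambda'$ the residual $R_{1,+}+R_{1,-}$ in \cref{ua2} for $W$ necessarily contains a term proportional to $(\t\triangle-\E\t\triangle)$ of fluctuation size comparable to $\sigma_{\t\triangle}\asymp\sigma_W$, so after normalization $\mathscr{A}_0$ in \cref{p211} is $O(1)$ and \cref{P1} returns a vacuous bound. This is precisely why the paper formulates the bivariate linearity and covariance conditions for the vector $(\t\triangle,\t V)$ and aims at a two-dimensional conditional CLT (requiring a multivariate analogue of \cref{P1}, as in \cite{dey2023}), from which the CLT for the linear combination follows by Cram\'er--Wold. Your route must either pass to this multivariate formulation or diagonalize first; as written, the one-dimensional application of \cref{P1} fails for every $H$ containing a triangle (for triangle-free $H$ your argument collapses to the two-star case and is fine).
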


%We believe our approach also works beyond the subcritical region. The missing part towards a complete proof involves higher-order concentration inequalities, which have not been fully developed yet beyond the subcritical region.
%In fact, if \cref{LH,lemmaB3,LI} (even with some slower rates) can be extended to the subcritical region, then the conditional CLT can be extended to the subcritical region. We leave a systematic study of higher-order concentration inequalities in the subcritical region for future work.
%Nevertheless, we make the following conjecture, which is consistent with \cite[Theorem~1.6]{mukherjee2023statistics} for edge-corrected two star counts in the two star ERGM.
%\begin{conj}\label{conj2}
%\cref{thm:twostar} and \cref{conj1} also hold in  the supercritical region, and, in this case, $\t p_n$ is assumed to be sufficiently close to any solution $p^*$ to \cref{Sub} such that $\varphi'(p^*)<1$. 
%\end{conj}

%Again suggested by \cite[Theorem~1.6]{mukherjee2023statistics},
%in the critical case, $\mu_{V_n}$ in \cref{eq:muv} and $\mu_F$ in \cref{eq:muf} should be changed. However, we do not have a concrete conjectured form due to lack of understanding of the critical case.

\begin{remark}\label{rem:phase}
Note that $\sigma_n^2$ in \cref{eq:mun} explodes as the parameters approach criticality, indicating a phase transition of the number of edges in the ERGM. However, $\sigma_{ V_n}^2$ in \cref{eq:sigv} is always positive for any solution $p$ to \cref{Sub} with $1-\varphi_\beta'(p)\geq 0$. This seems to suggest that the phase transition of ERGMs when fixing the edge number, if exists, occurs beyond the uniqueness threshold.
See a somewhat related result for the Ising model in \cite{Bauerschmidt2024}. 
We do not know if the conditional CLT continues to hold beyond the subcritical region.
\end{remark}

We organize the paper as follows.  
In \cref{sec:2}, we prove \cref{thm:twostar}. In \cref{sec:lem}, we prove \cref{L2,lem:lclt}. In \cref{sec:prooflemma}, we prove some lemmas stated in \cref{sec:2}. In \cref{sec:multi}, we sketch the computations leading to \cref{conj1}. The computation of $c^*$ can be found in \cref{append}. In \cref{appendc}, we provide a higher-order concentration inequality using the Poincar\'e inequality.

%In Section 2 we give a proof of the local central limit theorem, in Section 3 we modify the theorem of Dey et al. \cite{dey2023}, in Section 4 we calculate the linearity condition in detail, and in Section 5 we present our conjecture about the central limit theorem for general graphs in ERGM. Finally, in the appendix we give two technical proofs.

\section{Proof of main result}\label{sec:2}

In this section, we first state and prove a new conditional CLT via Stein's method of exchangeable pairs. 
The result (\cref{P1}) and its proof resemble \cite[Section~2]{dey2023}, except that our result may be easier to apply as we do not make change of variables and do not impose the condition that $\Delta Y$ can only take three values ($\Delta Y$ is to be defined soon). 
Then, we use \cref{P1} to prove \cref{thm:twostar} in \cref{sec:main}.

\subsection{Modified conditional CLT of Dey and Terlov}
The general problem we consider is that we have two random variables $X$ and $Y$ and we wish to establish a CLT for the conditional distribution of $W=(X-\mu_k)/\sigma_k$ given $Y=k$, where $\mu_k, \sigma_k$ are suitable normalizing constants depending on $k$ such that $$\E(W|Y=k)\approx 0,\quad \Var(W|Y=k)\approx 1.$$

\cite{dey2023} successfully adapted Stein's method of exchangeable pairs (\cite{stein1986approximate}) to the above setting by assuming $W=W(G), Y=Y(G)$ are functions of an underlying random variable $G$ ($G$ takes values in a general space $\mathcal{G}$) and considering an exchangeable pair $(G, G')$ (meaning  the following joint laws are equal) such that
\ben{\label{eq:exchWY}
\mathcal{L}(G, G')=\mathcal{L}(G', G).
}
For any function $u: \mathcal{G}\to \mathbb{R}$ and random variable $U=u(G)$, we denote $U'=u(G')$ and $\Delta$ to be the following difference operator
\ben{\label{Delta def}
\Delta U=U'-U.
}

Under suitable conditions on the exchangeable pair, Dey and Terlov obtained a CLT with an error bound on the Wasserstein distance for $\mathcal{L}(W\mid Y=k)$. See the details in their Section~2.

In the following, we provide a streamlined and modified version of their result. Compared to their result, we do not make change of variables and we allow  $\Delta Y$ to take more than three values.
%We denote $\Delta W:=W'-W$, $\Delta Y:=Y'-Y$. In fact, $\Delta$ is a difference operator, which means that for any function $T=T(X,Y)$ of $(X,Y)$, 
%\begin{equation}
    %\label{Delta def}
    %\Delta(T)=T(X',Y')-T(X,Y).
%\end{equation} 
Suppose that $\Delta Y$ is integer-valued.
We denote (we use $\pm$ to denote two equations with the corresponding sign on the right-hand side respectively)
\begin{align}\label{eq:Mlpm}
    M_{l,\pm}({W},Y)=
       \IE\left(({W}^{\prime}-{W})^l\mathbbm{1}_{\Delta Y =\pm 1}\mid W, Y\right), &\quad l=0,1,2,
\end{align}
where $\mathbbm{1}_\cdot$ denotes the indicator function.
Suppose for some constants $Q>0$, $\lambda>0$ and ${a_+}+{a_-}=1$, we have
\begin{align}
   \label{ua1} M_{0,\pm}({W},Y)=&Q+R_{0,\pm};\\ 
    \label{ua2}M_{1,\pm}({W},Y)=&-\lambda({a_\pm}{W}+{R_{1,\pm}});\\\label{ua3}
    M_{2,\pm}({W},Y)=&\lambda(1+{R_{2,\pm}}).
\end{align}
 
\begin{proposition}\label{P1}Suppose $W$ and $Y$ are random variables satisfying the above assumptions \eqref{ua1}-\eqref{ua3}. For any $k$ such that $p_k:=\mathbb{P}(Y=k)>0$ and $p_{k-1}=\mathbb{P}(Y=k-1)>0$, we have, using $(W|Y=k)$ to denote a random variable having the conditional distribution of $W$ given $Y=k$,
\begin{align}
 \label{p211}
     d_{\mathcal{W}}((W\mid Y=k),Z)\leq& C(1+r_k)(\mathscr{A}_k+\mathscr{B}_k+\lambda^{-1} \mathscr{C}_k+Q^{-1}(\mathscr{D}_k+\mathscr{E}_k+\mathscr{F}_k)),
\end{align}
 where $d_{\mathcal{W}}$ denotes the Wasserstein distance, $C$ is a universal constant, $Z\sim N(0,1)$ is a standard normal random variable, $r_k=p_{k-1}/p_k$,
 \begin{align*}
     &\mathscr{A}_k=\IE(|R_{1,+}+R_{1,-}|\mid Y=k);\\ &\mathscr{B}_k=\IE(|R_{2,+}+R_{2,-}|\mid Y=k);\notag\\
      &\mathscr{C}_k=\IE(|\Delta W|^3\mid Y\in\{k-1,k\});\\
     &\mathscr{D}_k=\IE((1+|a_+|+|\IE R_{1,+}|)(|\Delta W|+|R_{0,+}|+|R_{0,-}|)\mid Y\in\{k-1,k\}),\\
     &\t R_{1,+}=R_{1,+}-\IE R_{1,+},\\
     &\mathscr{E}_k=\IE(|\t R_{1,+}\Delta W|+|\Delta (\t R_{1,+})|  \mid Y \in \{k-1,k\})+ \IE(|\t R_{1,+}|(|R_{0,+}|
        + |R_{0,-}|)\mid Y \in \{k-1,k\}),\\
      &\mathscr{F}_k=  \IE(| R_{2,+}\Delta W|+|\Delta ( R_{2,+})|  \mid Y \in \{k-1,k\})
        + \IE(| R_{2,+}|(|R_{0,+}|
        + |R_{0,-}|)\mid Y \in \{k-1,k\}),
 \end{align*}
 and $\Delta$ is the difference operator defined in \cref{Delta def}.
 \end{proposition}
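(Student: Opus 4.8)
The plan is to run Stein's method of exchangeable pairs, organizing the computation around the two transition events $\{Y=k,\,\Delta Y=-1\}$ and $\{Y=k-1,\,\Delta Y=+1\}$, which exchangeability \eqref{eq:exchWY} forces to be mirror images of one another. Fix a test function $h$ with $\|h'\|_\infty\le 1$ and let $f=f_h$ be the bounded solution of the Stein equation $f'(w)-wf(w)=h(w)-\IE h(Z)$, so that $\|f\|_\infty,\|f'\|_\infty,\|f''\|_\infty$ are universally bounded. It then suffices to bound $|\IE(f'(W)-Wf(W)\mid Y=k)|$. The engine is the exchangeability identity: for any $\psi:\mathcal{G}\times\mathcal{G}\to\IR$,
\be{
\IE\bklr{\psi(G,G')\mathbbm{1}_{Y=k,\,\Delta Y=-1}}=\IE\bklr{\psi(G',G)\mathbbm{1}_{Y=k-1,\,\Delta Y=+1}},
}
obtained by swapping $G\leftrightarrow G'$ and relabelling.

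I would use this for two choices of $\psi$. First, taking $\psi=\Delta W\,f(W)$ (so that $\psi(G',G)=-\Delta W\,f(W')$) and Taylor expanding $f(W')=f(W)+\Delta W f'(W)+\tfrac12(\Delta W)^2 f''(W)+\dots$ on the up-crossing event, the left side produces, via \eqref{eq:Mlpm} and the linearity assumption \eqref{ua2}, the drift $M_{1,-}=-\lambda(a_-W+R_{1,-})$ at level $k$, while the right side produces both the drift $M_{1,+}=-\lambda(a_+W+R_{1,+})$ at level $k-1$ and, from the first Taylor term, the diffusion $M_{2,+}=\lambda(1+R_{2,+})$ at level $k-1$ via \eqref{ua3}. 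Dividing by $-\lambda$ and rearranging yields a \emph{drift identity} of the schematic form
\be{
\IE\bklr{a_-Wf(W)\mathbbm{1}_{Y=k}}+\IE\bklr{a_+Wf(W)\mathbbm{1}_{Y=k-1}}=\IE\bklr{f'(W)\mathbbm{1}_{Y=k-1}}+(\text{remainders}),
}
where the remainders collect the $R_{1,\pm}$ and $R_{2,+}$ contributions together with the third-order Taylor term $\tfrac{1}{2\lambda}\IE\bklr{(\Delta W)^3 f''(W)\cdots}$, the latter being the source of the $\lambda^{-1}\mathscr{C}_k$ term.

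The two sides of this identity still live on different levels, so the second step is a \emph{level-conversion identity}, obtained from the same exchangeability relation with $\psi=g(W)$ a function of $G$ alone. Inserting $M_{0,\pm}=Q+R_{0,\pm}$ from \eqref{ua1} gives
\bes{
Q\bklr{\IE(g(W)\mathbbm{1}_{Y=k})-\IE(g(W)\mathbbm{1}_{Y=k-1})}={}&\IE(\Delta g\,\mathbbm{1}_{Y=k-1,\,\Delta Y=+1})\\
&+\IE(g(W)R_{0,-}\mathbbm{1}_{Y=k})-\IE(g(W)R_{0,+}\mathbbm{1}_{Y=k-1}).
}
Solving for the difference of levels and applying this to $g=a_+Wf(W)$ and to $g=f'(W)$ lets me replace every level-$(k-1)$ expectation in the drift identity by the corresponding level-$k$ expectation, at the cost of gradient terms $\IE(\Delta g\cdots)$ and $R_{0,\pm}$ terms, all carrying the prefactor $Q^{-1}$. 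Substituting back and using $a_++a_-=1$ collapses the two drift contributions into the single term $\IE(Wf(W)\mathbbm{1}_{Y=k})$, so that after dividing by $p_k$ one reaches $\IE(f'(W)-Wf(W)\mid Y=k)=(\text{errors})$. Once all expectations over $\{Y=k-1\}$ are re-expressed through the event $\{Y\in\{k-1,k\}\}$, which carries probability $p_k(1+r_k)$, the errors assemble into $\mathscr{A}_k,\dots,\mathscr{F}_k$ with exactly the prefactors $(1+r_k)$, $\lambda^{-1}$ and $Q^{-1}$ displayed in \eqref{p211}; the centering $\td R_{1,+}=R_{1,+}-\IE R_{1,+}$ is introduced precisely so that the constant part of $R_{1,+}$ is absorbed into the main term and only its fluctuation is passed through the conversion step.

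I expect the main obstacle to be the bookkeeping in this conversion step. When $g$ is itself one of the remainder-bearing quantities, the gradient term $\IE(\Delta g\cdots)$ forces control of how the remainder functions change between the two states of the pair, producing the $|\Delta(\td R_{1,+})|$ and $|\Delta(R_{2,+})|$ contributions appearing in $\mathscr{E}_k$ and $\mathscr{F}_k$. Keeping these cross terms from proliferating, and verifying that each genuinely carries the advertised $\lambda^{-1}$ or $Q^{-1}$ gain rather than an uncontrolled factor, is the delicate part of the argument; the Taylor expansions, the standard Stein-factor bounds on $f$, and the positivity of $p_k,p_{k-1}$ are otherwise routine.
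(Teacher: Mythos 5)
Your proposal is essentially the paper's own proof: the paper applies the exchangeable-pair identity to the antisymmetric quantity $(F(W')-F(W))\psi(Y',Y)$ with $F'=f$ and $\psi(y',y)=\mathbbm{1}_{y'=k}\mathbbm{1}_{y'-y=1}+\mathbbm{1}_{y=k}\mathbbm{1}_{y'-y=-1}$, Taylor-expands to pick up $M_{1,\pm}$ and $M_{2,\pm}$ with a third-order error giving $\lambda^{-1}\mathscr{C}_k$, and then uses exactly your level-conversion identity (their Lemma~\ref{le:d}, derived from \eqref{ua1} and exchangeability) together with the centering $\t R_{1,+}=R_{1,+}-\IE R_{1,+}$ to move everything to $\{Y=k\}$, producing $\mathscr{A}_k,\dots,\mathscr{F}_k$ with the advertised $(1+r_k)$, $\lambda^{-1}$ and $Q^{-1}$ prefactors. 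The one point to adjust is your choice $\psi=\Delta W\,f(W)$, which places the entire diffusion contribution at level $k-1$ and hence yields $\IE(|R_{2,+}|\mid Y=k)$ rather than $\mathscr{B}_k=\IE(|R_{2,+}+R_{2,-}|\mid Y=k)$; to land on the stated bound you should use the symmetrized version (expand the antiderivative $F$ as the paper does), which splits the $(\Delta W)^2$ term evenly between the two levels and produces the sum $R_{2,+}+R_{2,-}$ at level $k$.
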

Before the proof, we need a lemma, which is a modified version of Lemma 5.1 in \cite{dey2023}, to bound  $\IE\left(U( \mathbbm{1}_{Y=k}-\mathbbm{1}_{Y=k-1})\right)$.
\begin{lemma}\label{le:d}
Let $U=u(W,Y)$ be a function of $W$ and $Y$, and let $\Delta U=u(W', Y')-u(W, Y)$ be defined as in \cref{Delta def}.
   %Assume  random variables $(U^{\prime},Y^{\prime})$ are exchangeable with $(U,Y)$ and $U$ is $(X,Y)$-measurable.  
   Then, for any $k$ such
that $\mathbb{P}(Y=k)>0$ and $\mathbb{P}(Y=k-1)>0$, we have
\begin{align}
 & \left| \IE (U (\mathbbm{1}_{Y=k} - \mathbbm{1}_{Y=k-1}) )\right|\notag\\\label{p212}
\leq &\frac{1}{Q} \IE(|\Delta U| \mathbbm{1}_{Y \in \{k-1,k\}}) + \frac{1}{Q} \IE(|U|(|R_{0,+}| + |R_{0,-}|)\mathbbm{1}_{Y \in \{k-1,k\}}).
\end{align}
\end{lemma}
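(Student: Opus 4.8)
The plan is to exploit the exchangeability $\mathcal{L}(G,G')=\mathcal{L}(G',G)$ together with the defining identities \eqref{ua1} for $M_{0,\pm}$ in a ``summation by parts'' fashion, so that the difference of indicators $\mathbbm{1}_{Y=k}-\mathbbm{1}_{Y=k-1}$ is converted into a single increment $\Delta U$ plus lower-order remainders involving $R_{0,\pm}$. The whole argument is driven by the observation that $\Delta Y=+1$ links level $Y=k-1$ to $Y'=k$, while $\Delta Y=-1$ links $Y=k$ to $Y'=k-1$, so the two events are exchangeable images of each other.

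First I would use that $U=u(W,Y)$, hence $U\mathbbm{1}_{Y=k-1}$ is a function of $(W,Y)$, to condition on $(W,Y)$ and pull it inside $M_{0,+}$:
\begin{align*}
\IE\bklr{U\mathbbm{1}_{Y=k-1}M_{0,+}(W,Y)}=\IE\bklr{U\mathbbm{1}_{Y=k-1}\mathbbm{1}_{\Delta Y=1}}=\IE\bklr{U\mathbbm{1}_{Y=k-1,\,Y'=k}},
\end{align*}
and symmetrically $\IE\bklr{U\mathbbm{1}_{Y=k}M_{0,-}(W,Y)}=\IE\bklr{U\mathbbm{1}_{Y=k,\,Y'=k-1}}$. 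Substituting $M_{0,\pm}=Q+R_{0,\pm}$ from \eqref{ua1} expresses $Q\,\IE(U\mathbbm{1}_{Y=k-1})$ and $Q\,\IE(U\mathbbm{1}_{Y=k})$ as these two-level expectations minus the corresponding $R_{0,\pm}$ corrections.

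The key step is the exchangeability swap: applying $\mathcal{L}(G,G')=\mathcal{L}(G',G)$ to $\phi(G,G')=U(G)\mathbbm{1}_{Y(G)=k-1,\,Y(G')=k}$ gives
\begin{align*}
\IE\bklr{U\mathbbm{1}_{Y=k-1,\,Y'=k}}=\IE\bklr{U'\mathbbm{1}_{Y=k,\,Y'=k-1}}.
\end{align*}
Subtracting the two identities produced above then cancels the common event $\{Y=k,\,Y'=k-1\}$ in the leading $Q$ terms and leaves the increment $U-U'=-\Delta U$:
\begin{align*}
Q\bklr{\IE(U\mathbbm{1}_{Y=k})-\IE(U\mathbbm{1}_{Y=k-1})}={}&-\IE\bklr{\Delta U\,\mathbbm{1}_{Y=k,\,Y'=k-1}}\\
&-\IE\bklr{U\mathbbm{1}_{Y=k}R_{0,-}}+\IE\bklr{U\mathbbm{1}_{Y=k-1}R_{0,+}}.
\end{align*}
Dividing by $Q$, taking absolute values, and relaxing $\mathbbm{1}_{Y=k,\,Y'=k-1}\le\mathbbm{1}_{Y\in\{k-1,k\}}$ together with $\mathbbm{1}_{Y=k},\mathbbm{1}_{Y=k-1}\le\mathbbm{1}_{Y\in\{k-1,k\}}$ yields \eqref{p212}.

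Since this is essentially a bookkeeping identity once the exchangeable-pair machinery is in place, I do not expect a genuine obstacle. The only point needing care is tracking signs through the swap so that the increment emerges as $-\Delta U$ on the correct event, and checking that each $R_{0,\pm}$ remainder stays attached to its proper indicator ($R_{0,+}$ with $\mathbbm{1}_{Y=k-1}$, $R_{0,-}$ with $\mathbbm{1}_{Y=k}$) before both are relaxed to $\mathbbm{1}_{Y\in\{k-1,k\}}$.
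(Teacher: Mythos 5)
Your proposal is correct and follows essentially the same route as the paper: condition on $(W,Y)$ to rewrite $Q\,\IE(U\mathbbm{1}_{Y=k})$ and $Q\,\IE(U\mathbbm{1}_{Y=k-1})$ via $M_{0,\mp}=Q+R_{0,\mp}$ as expectations over the transition events $\{Y=k,Y'=k-1\}$ and $\{Y=k-1,Y'=k\}$, swap the latter by exchangeability to produce the increment $\Delta U$, and bound the remainders. The sign bookkeeping and the pairing of $R_{0,+}$ with $\mathbbm{1}_{Y=k-1}$ and $R_{0,-}$ with $\mathbbm{1}_{Y=k}$ match the paper's argument exactly.
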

\begin{proof}
  Assumption \cref{ua1} implies that
\[
Q = \IE(\mathbbm{1}_{\Delta Y = -1} | W, Y) - R_{0,-}.
\]
Hence,
\begin{align*}
   \IE(U\mathbbm{1}_{Y=k}) = &\frac{1}{Q} \IE(U\mathbbm{1}_{Y=k}\mathbbm{1}_{\Delta Y = -1}) - \frac{1}{Q} \IE(U\mathbbm{1}_{Y=k}R_{0,-})\\
= &\frac{1}{Q} \IE(U\mathbbm{1}_{Y=k}\mathbbm{1}_{Y'=k-1}) - \frac{1}{Q} \IE(UR_{0,-}\mathbbm{1}_{Y=k}). 
\end{align*}
Similarly, using the fact that $Q = \IE(\mathbbm{1}_{\Delta Y = 1} | W, Y) - R_{0,+}$, we get that
\[
\IE(U\mathbbm{1}_{Y=k-1}) = \frac{1}{Q} \IE(U\mathbbm{1}_{Y'=k}\mathbbm{1}_{Y=k-1}) - \frac{1}{Q} \IE(UR_{0,+}\mathbbm{1}_{Y=k-1}).
\]
Moreover, exchangeability of $(U, Y)$ and $(U', Y')$ implies
\[
\IE(U\mathbbm{1}_{Y=k-1}\mathbbm{1}_{Y'=k}) = \IE(U' \mathbbm{1}_{Y'=k-1}\mathbbm{1}_{Y=k}).
\]
It follows that
\[
|\IE(U\mathbbm{1}_{Y=k}) - \IE(U\mathbbm{1}_{Y=k-1})| \leq \frac{1}{Q} \IE(|\Delta U| \cdot \mathbbm{1}_{Y \in \{k-1,k\}})
\]
\[
+ \frac{1}{Q} \IE(|U| (|R_{0,+}| + |R_{0,-}|)\mathbbm{1}_{Y \in \{k-1,k\}}),
\]
which completes the proof.
\end{proof}

\begin{proof}[Proof of Proposition \ref{P1}]
%Without loss of generality, we assume $\sigma_X=1$. 
By the exchangeability of $(W,Y)$ and $(W^{\prime},Y^{\prime})$, 
\begin{align*}
    \Theta_f(W,Y):= \IE[(F(W^{\prime})-F(W))\cdot \psi(Y^{\prime},Y)\mid W,Y]
\end{align*}
is a zero-mean variable for any symmetric function $\psi$ and $F^{\prime}=f$ such that the above expectation exists.
Let $f$ be the bounded solution to the Stein equation
\begin{align*}
    xf (x)-f' (x)=h(x)-\E h(Z)
\end{align*}
for a $1-$Lipschitz function $h$. From \cite[Eq.(2.13)]{chen2010normal}, we have
\begin{equation}\label{10001}
    \|f\|_{\infty}\leq 1,\quad
\|f'\|_{\infty}\leq \sqrt{2/\pi}\ \text{ and }\  \|f''\|_{\infty}\leq 2.
\end{equation}
Let $\psi(y^{\prime},y)=\mathbbm{1}_{y^{\prime}=k}\mathbbm{1}_{y^{\prime}-y=1}+\mathbbm{1}_{y=k}\mathbbm{1}_{y^{\prime}-y=-1}$. Using Taylor's expansion and the notation \cref{eq:Mlpm}, we have
\begin{align*}
    \Theta_f(W,Y)=&\left(f(W)M_{1,-}(W,Y)+\frac{1}{2}f^{\prime}(W)M_{2,-}(W,Y)\right)\mathbbm{1}_{Y=k}\\
    &+\left(f(W)M_{1,+}(W,Y)+\frac{1}{2}f^{\prime}(W)M_{2,+}(W,Y)\right)\mathbbm{1}_{Y=k-1}+ \operatorname{Err},
\end{align*}
where
\begin{align}\label{10007}
    |\operatorname{Err}|\leq \frac{1}{3}|\Delta W|^3 \mathbbm{1}_{Y\in \{k-1, k\}} .
\end{align}
Using assumptions \eqref{ua2}, \eqref{ua3} and $a_++a_-=1$, we have
\begin{align*}
\Theta_f(W,Y)=&-\lambda  Wf(W)\mathbbm{1}_{Y=k}-\lambda a_+ f(W)(\mathbbm{1}_{Y=k-1}-\mathbbm{1}_{Y=k})
\\
&-\lambda (R_{1,+}+R_{1,-})f(W)\mathbbm{1}_{Y=k}- \lambda \t R_{1,+} f(W)(\mathbbm{1}_{Y=k-1}-\mathbbm{1}_{Y=k})\\
&-\lambda f(W)(\mathbbm{1}_{Y=k-1}-\mathbbm{1}_{Y=k}) \IE R_{1,+}\\
&+\lambda  f^{\prime}(W)\mathbbm{1}_{Y=k}+\frac{1}{2}\lambda  
 f^{\prime}(W)(\mathbbm{1}_{Y=k-1}-\mathbbm{1}_{Y=k})\\
&+\frac{1}{2}\lambda(R_{2,+}+R_{2,-}) f^{\prime}(W)\mathbbm{1}_{Y=k}+\frac{1}{2}\lambda R_{2,+} f^{\prime}(W)(\mathbbm{1}_{Y=k-1}-\mathbbm{1}_{Y=k})\\
&+\operatorname{Err}.
\end{align*}
Hence, by $\IE\Theta_f(W,Y)=0$, 
\begin{equation}\label{10002}
\begin{aligned}
    \lambda \IE\big[(Wf(W)-f^{\prime}(W))1_{Y=k}\big]=I+II+III+IV,
\end{aligned}
\end{equation}
where
\begin{align*}
    I=&-\lambda  a_+ \IE\big[ f(W)( \mathbbm{1}_{Y=k-1}- \mathbbm{1}_{Y=k})\big]-\lambda \IE R_{1,+} \IE\big[ f(W)(\mathbbm{1}_{Y=k-1}-\mathbbm{1}_{Y=k})\big]\\&+\frac{1}{2}\lambda  \IE\big[
 f^{\prime}(W)(\mathbbm{1}_{Y=k-1}-\mathbbm{1}_{Y=k})\big],\\
 II=&-\lambda \IE\big[(R_{1,+}+R_{1,-})f(W)\mathbbm{1}_{Y=k}\big]- \lambda \IE\big[ \t R_{1,+} f(W)(\mathbbm{1}_{Y=k-1}-\mathbbm{1}_{Y=k})\big],\\
 III=&\frac{1}{2}\IE \big[\lambda(R_{2,+}+R_{2,-}) f^{\prime}(W)\mathbbm{1}_{Y=k}\big]+\frac{1}{2}\IE\big[\lambda R_{2,+} f^{\prime}(W)(\mathbbm{1}_{Y=k-1}-\mathbbm{1}_{Y=k})\big],\\
 IV=&\E [\operatorname{Err}].
\end{align*}
Noting that $$ \IE\big[(Wf(W)-f^{\prime}(W))\mathbbm{1}_{Y=k}\big]=\IE \big[(h(W)-\E h(Z))\mathbbm{1}_{Y=k}\big]=p_k\{\E[h(W)|Y=k]-\E h(Z) \}$$
and using \cref{10001}, we have
\begin{equation}\label{10009}
   d_{\mathcal{W}}((W\mid Y=k),Z)\leq \frac{1}{\lambda p_k}\sup_{f\ \text{in}\ \cref{10001}} \left| I+II+III+IV \right|. 
\end{equation}
For $I$, recalling that $r_k=p_{k-1}/p_k\in (0,\infty)$, by \cref{le:d}, we obtain
\begin{align}\label{10003}
     &\frac{1}{\lambda  p_k}\sup_{f\ \text{in}\ \cref{10001}}|I|\notag\\\leq& \frac{1+r_k}{Q}\Big(\big(\sqrt{2/\pi}(|a_+|+|\IE R_{1,+}|)+1\big) \IE(|\Delta W|\mid Y\in \{k-1,k\})\notag\\
        &\quad\quad\quad\quad\quad+(|a_+|  +|\IE R_{1,+}|+1/\sqrt{2\pi})\IE(|R_{0,+}| + |R_{0,-}|\mid Y\in \{k-1,k\})\Big).
\end{align}
Noting that $R_{1,\pm}$ and $R_{2,\pm}$ are $(W,Y)$ measurable, we have
$$|\Delta ( \t R_{1,+} f(W))|\leq \|f'\|_{\infty}|\t R_{1,+}||\Delta W|+\|f\|_{\infty}|\Delta (\t R_{1,+})|\leq \sqrt{2/\pi}|\t R_{1,+}\Delta W|+|\Delta (\t R_{1,+})|$$
and
$$|\Delta (  R_{2,+} f'(W))|\leq \|f''\|_{\infty}| R_{2,+}||\Delta W|+\|f'\|_{\infty}|\Delta ( R_{2,+})|\leq | R_{2,+}\Delta W|+\sqrt{2/\pi}|\Delta ( R_{2,+})|.$$

We now consider $II$, by \cref{le:d} again,
\begin{align}
       & \frac{1}{\lambda  p_k}\sup_{f\ \text{in}\ \cref{10001}} |II|\notag\\\leq& \frac{1+r_k}{ Q}\Big(\frac{Q}{(1+r_k)}\IE\big(|R_{1,+}+R_{1,-}|\mid Y=k)+\IE\Big(\sqrt{\frac{2}{\pi}}|\t R_{1,+}\Delta W|+|\Delta (\t R_{1,+})| \big \mid Y \in \{k-1,k\}\Big)\notag\\
        &+ \IE(|\t R_{1,+}|(|R_{0,+}|\label{10004}
        + |R_{0,-}|)\mid Y \in \{k-1,k\})\Big).
\end{align}
Similarly, for $III$,
\begin{align}
       & \frac{1}{\lambda  p_k}\sup_{f\ \text{in}\ \cref{10001}}|III|\notag\\\leq& \frac{1+r_k}{ Q\sqrt{2\pi}}\Big(\frac{Q}{1+r_k}\IE(|R_{2,+}+R_{2,-}|\mid Y=k)+\IE(\sqrt{2\pi}|R_{2,+}\Delta W|+|\Delta  (R_{2,+})| \big \mid Y \in \{k-1,k\})\notag\\
        &+ \IE\big(| R_{2,+}|(|R_{0,+}|
        + |R_{0,-}|)\mid Y \in \{k-1,k\}\big)\Big).\label{10005}
\end{align}
For $IV$, by \cref{10007},
\begin{equation}
    \label{10008}
    \frac{1}{\lambda p_k}|IV|\leq\frac{1+r_k}{3\lambda }\IE\left(|\Delta W|^3\mid Y\in\{k-1,k\}\right).
\end{equation}
Combining \cref{10009,10003,10004,10005,10008}, we complete the proof.
 % From the assumption ,   and the bound of the first three derivatives of solutions to Stein equations,  dividing by $p_k$, now applying Lemma \ref{le:d} yields
%     \begin{align*}
%         &d_{\mathcal{W}}((X\mid Y=k),Z)\notag\\
%         \leq &\frac{1+r_k}{\psi Q}\Big(\frac{Q}{(1+r_k)}\IE(|R_{1,+}+R_{1,-}|\mid Y=k)+\IE(|\t R_{1,+}\Delta X|+|\Delta (\t R_{1,+})|  \mid Y \in \{k-1,k\})\\
%         &+ \IE(|\t R_{1,+}|(|R_{0,+}|
%         + |R_{0,-}|)\mid Y \in \{k-1,k\})\Big)\notag\\
%         &+\frac{1+r_k}{\psi Q\sqrt{2\pi}}\Big(\frac{Q}{1+r_k}\IE(|R_{2,+}+R_{2,-}|\mid Y=k)+\IE(|R_{2,+}\Delta X|+|\Delta  R_{2,+})|  \mid Y \in \{k-1,k\})\\
%         &+ \IE(| R_{2,+}|(|R_{0,+}|
%         + |R_{0,-}|)\mid Y \in \{k-1,k\})\Big)\\
%         &+\\
%         &+\frac{1+r_k}{3\lambda \psi}\IE\left(|\Delta X|^3\mid Y\in\{k-1,k\}\right),
%     \end{align*}
% which is the
% desired result \eqref{p211}.

\end{proof}

\subsection{Proof of \cref{thm:twostar}}\label{sec:main}

We first argue that the conditional probability distribution of the ERGM \cref{eq:ERGM} given the total number of edges is independent of the parameter $\beta_1$. In fact, for any $G\in \mathcal{G}_n$ with $n$ vertices and $k$ edges, 
\begin{align*}
    \mathbb{P}_{\beta}(G\mid E_n=k)=&\frac{\exp\left\{ n^2 \sum_{i=1}^m \beta_i t(H_i, G)\right\}}{\sum_{\substack{F\in \mathcal{G}_n\\ e(F)=k}}\exp\left\{ n^2 \sum_{i=1}^m\beta_i t(H_i, F)\right\}}\\
    =&\frac{\exp\left\{2\beta_1 k+ n^2 \sum_{i=2}^m \beta_i t(H_i, G)\right\}}{\sum_{\substack{F\in \mathcal{G}_n\\ e(F)=k}}\exp\left\{2\beta_1 k+ n^2 \sum_{i=2}^m \beta_i t(H_i, F)\right\}}\\
    =&\frac{\exp\left\{ n^2 \sum_{i=2}^m \beta_i t(H_i, G)\right\}}{\sum_{\substack{K\in \mathcal{G}_n\\ e(F)=k}}\exp\left\{ n^2 \sum_{i=2}^m \beta_i t(H_i, F)\right\}}.
\end{align*}
In statistical terms, this is because $E_n$ is a sufficient statistic for the parameter $\beta_1$.
Therefore, we assume, for the purpose of approriate centering, that the parameters in the ERGM \cref{eq:ERGM} are $(\beta_{1n},\beta_{2n},\dots, \beta_{mn})\in B$ as in the first paragraph of \cref{thm:twostar}. We denote $\E(\cdot):=\E_{(\beta_{1n}, \beta_{2n},\dots, \beta_{mn})}(\cdot)$ to be the expectation with respect to the ERGM \cref{eq:ERGM}.

We label the vertex set of the random graph $G_n$ by $\{1,\dots, n\}$ and let $Y_{ij}$, $1\leq i<j\leq n$ be the edge indicators which equals 1 or 0 depending on whether the edge $(i,j)$ is present or not in $G_n$. Let $Y_{ji}=Y_{ij}$. By the assumption of \cref{thm:twostar}, we have
\be{
\E Y_{ij}=\t p_n.
}

In the following, for convenience of notation, we drop the subindex $n$. For example, we write $G:=G_n$, $\beta_1:=\beta_{1n},\dots, \beta_m:=\beta_{mn}$, $E:=E_n$, $V:=V_n$, $\widetilde p:=\widetilde p_n$. Let $p:=p_n$ be the unique solution to \eq{Sub}.

We remark that all the estimates in \cite{bhamidi2011mixing}, \cite{ganguly2019} and \cite{Sam2020} are uniform in compact subsets of the subcritical region.
%if $1-\varphi'_\beta(p)$ ($2-\Phi'_\beta(1)$ resp.) is bounded away from 0. 
%\red{Because $\tau_{mix}$ and the constant in the Poincar\'e inequality are uniform in any compact subset?}\blue{(right)}
For example, see \cite[Proof of Lemma~14]{bhamidi2011mixing}, \cite[Eq.(61)]{ganguly2019} and \cite[Theorem~4.1 and Proof of Proposition~4.2]{Sam2020}.
Therefore, all the constants $C$, including those implicitly involved in the Big $O$ notation $O(\cdot)$, and the $L^r$ norms appearing in the error bounds in this proof are uniformly bounded in the subset $B$ of the subcritical region considered in \cref{thm:twostar}.

We introduce the following notation:
\be{
\t Y_{ij}:=Y_{ij}-\t p,
}
\be{
\t E:=\sum_{1\leq i<j\leq n} \t Y_{ij},
}
\be{
\t V:=\sum_{1\leq j<k\leq n}\sum_{i\ne j,k} \t Y_{ij} \t Y_{ik}.
}
Note that given $E=N\t p$, we have
\be{
V-N(n-2)\widetilde p^2 =\t V. 
}
Therefore, \cref{eq:thm1} is equivalent to (note that we have dropped the subindex $n$)
\ben{\label{eq:thm11}
d_{\mathcal{W}}\left((\frac{\t V -\mu_{ \t V}}{\sigma_{  V}}\mid \t E=0),\ Z\right)\leq C_\varepsilon n^{-\frac{1}{2}+\varepsilon},
}
where
\begin{align}\label{muv}
    \mu_{\t V}&=\frac{2N\t q^2\sum_{l=2}^m\beta_{l}s_l\t p^{e_l}}{1-2\t q\sum_{l=2}^m\beta_{l} s_l \t p^{e_l-1}},\quad \t q=1-\t p,  \\\label{sigmav} \sigma_{ V}^2&=\frac{Nn\t p^2\t q^2}{\left(1-2\t q\sum_{l=2}^m\beta_{l}s_l\t  p^{e_l-1}\right)^2}.
\end{align}

To prove \cref{eq:thm11}, we will apply \cref{P1} with $k=0$ and
\begin{equation}\label{20001}
    W=\frac{\t V-\mu_{\t V}}{\sigma_V},\quad Y=\t E,
\end{equation}
both viewed as functions of the random graph $G$ or its edge indicators $\{Y_{ij}, 1\leq i<j\leq n\}$. We construct the exchangeable pair by randomly choosing an edge index $e$ uniformly from $\{(i,j): 1\leq i<j\leq n\}$ and resampling the edge indicator to be $Y'_e$ according to its conditional distribution under the ERGM \cref{eq:ERGM} given the value of all the other edge indicators. Let $G'$ be the resulting random graph which differs from $G$ by at most one edge. We verify the conditions of \cref{P1} by establishing the following proposition.

\begin{proposition}\label{P2}
    For $W$ and $Y=\t E$ in \cref{20001}, 
    %W=(\widetilde{V}-\mu_{\t V})/\sigma_V$, where 
    %\begin{align}\label{Ev}
    %   \IE\t V=\mu_{\t V}+O(n), 
    %\end{align} $\sigma_V$ was defined in \cref{eq:sigv}, and  $Y=\widetilde{E}$, 
    we have (recall \cref{eq:Mlpm})
    \begin{align}
       M_{0,\pm}(W,\t E)&=\t p\,\t q +R_{0,\pm},\label{eq:M0W}\\
     M_{1,+}(W,\t E)&=-\lambda \left( \alpha_+ W+R_{1,+}\right),\notag\\
      M_{1,-}(W,\t E)&=-\lambda \left( \alpha_- W+R_{1,-}\right),\notag\\
      M_{2,\pm}(W,\t E)&=\lambda(1+R_{2,\pm}),\notag
    \end{align}
where $\lambda=(2(1-2\sum_{l=2}^m\beta_ls_l\t p^{e_l-1}\t q))/N$, $\Delta W=O_r(n^{-1}),$
\begin{align*}
     a_+=&\frac{\t p-2\t q^2\sum_{l=2}^m\beta_l s_l\t p^{e_l-1}}{1-2\t q\sum_{l=2}^m\beta_ls_l\t p^{e_l-1}}, \quad a_{-}=\frac{\t q-2\t q\sum_{l=2}^m\beta_l s_l\t p^{e_l}}{1-2\t q\sum_{l=2}^m\beta_ls_l\t p^{e_l-1}},
     \\
     R_{0,\pm}=&O_{r}(n^{-1}),\quad R_{1,+}=O_{r}(n^{\frac{1}{2}}),\quad R_{1,-}=O_{r}(n^{\frac{1}{2}}),\quad
     R_{1,+}+R_{1,-}=O_{r}(n^{-\frac{1}{2}}),\\
    \t R_{1,+}=&R_{1,+}-\E R_{1,+}=O(n^{-\frac{1}{2}})\t E+O_{r}(n^{-\frac{1}{2}}),\\
    \t R_{1,-}=&R_{1,-}-\E R_{1,-}=O(n^{-\frac{1}{2}}) \t E+O_{r}(n^{-\frac{1}{2}}),\\
     R_{2,+}=&O_{r}(n^{-\frac{1}{2}}),\quad R_{2,-}=O_{r}(n^{-\frac{1}{2}}).
\end{align*}
Hereafter, for convenience, we use $O_{r}(h)$ in an unconventional way to denote some random variable $X$ such that the $L^r$ norm $\|X/h\|_r\leq C_r$ for any given $r\geq 1$ and a constant $C_r$ depending only on $r$ and the subset $B$ of the subcritical region considered in \cref{thm:twostar}.
\end{proposition}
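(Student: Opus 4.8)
The plan is to reduce every assertion to a one-step computation given the whole graph $G$ and then pass to the coarser conditioning on $(W,\t E)$. Indeed $(W,\t E)$ is a function of $G$, and $L^r$ norms do not increase under conditional expectation, so it suffices to show that each $\E(\,\cdot\mid G)$ equals its asserted main term---a function of $W$ and $\t E$, or a constant---plus a remainder of the stated $O_r$ order; the conditional expectation $\E(\,\cdot\mid W,\t E)$ of that remainder then obeys the same bound, and any piece that is linear in $\t E$ (hence $\sigma(W,\t E)$-measurable) passes through unchanged, which is what exposes the $\t E$-terms of $\t R_{1,\pm}$. Writing the resampled edge as $e=\{a,b\}$ and using that $\t V$ is multilinear of degree two in the $\t Y$'s, the change is exact, $\Delta\t V=\Delta Y_e\,D_{ab}$, where $D_{ab}:=\sum_{\ell\neq a,b}(\t Y_{a\ell}+\t Y_{b\ell})$ is the centred degree sum at $e$; hence $\Delta W=\Delta Y_e\,D_{ab}/\sigma_V$. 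A centred degree sum is a sum of $O(n)$ weakly dependent centred indicators, so $D_{ab}=O_r(\sqrt n)$, and with $\sigma_V\asymp n^{3/2}$ this gives $\Delta W=O_r(n^{-1})$ at once. The one-step law is governed by the conditional edge probability $p_e:=\P(Y_e=1\mid (Y_f)_{f\neq e})$: given $Y_e=0$ one has $\Delta Y_e=+1$ with probability $p_e$, and given $Y_e=1$ one has $\Delta Y_e=-1$ with probability $1-p_e$, so that, e.g., $N\sigma_V\,\E(\Delta W\,\mathbbm{1}_{\Delta Y=+1}\mid G)=\sum_e(1-Y_e)p_e D_{ab}$.

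Everything rests on a Taylor expansion of $p_e$. Its conditional log-odds is, to leading order, $2\Phi_\beta(\t p)$, with a first local correction linear in $D_{ab}/n$ coming from the linear response of the copy-counts of the $H_l$ through $e$ to the adjacent edges. Writing $p_e$ through the logistic link, using $\varphi_\beta(p)=p$ together with $|\t p-p|\le Cn^{-1}$ from \cref{L2}, and noting that the link has derivative $2\varphi_\beta(\t p)(1-\varphi_\beta(\t p))=2\t p\t q+O(n^{-1})$ at $\Phi_\beta(\t p)$, I would obtain
\be{p_e=\t p+\tfrac{2\t p\t q\,\kappa}{n}\,D_{ab}+(\text{higher-order and non-two-star terms})+O_r(n^{-1}),}
where $\kappa$ is a combinatorial factor built from the two-star counts $s_l$ of the $H_l$ (this is exactly where $s_l$ enters). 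With this, $M_{0,\pm}$ is immediate: $N M_{0,+}(\,\cdot\mid G)=\sum_e(1-Y_e)p_e$, and dividing the leading piece $\t p\sum_e(1-Y_e)=\t p(N-E)$ by $N$ gives $\t p\t q$ up to the $O_r(n^{-1})$ term $-\t p\,\t E/N$, while the $D_{ab}$-correction has mean zero and fluctuation $O_r(n^{-1})$; the same holds for $M_{0,-}$, yielding $M_{0,\pm}=\t p\t q+R_{0,\pm}$.

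The crux, and the main obstacle, is the linearity condition for $M_{1,\pm}$. Substituting the expansion of $p_e$ into $\sum_e(1-Y_e)p_e D_{ab}$ produces a term linear in $\t V$ from two sources: the \emph{direct} part $\t p\sum_e(1-Y_e)D_{ab}$, and the \emph{feedback} part $\tfrac{2\t p\t q\kappa}{n}\sum_e(1-Y_e)D_{ab}^2$, whose square $D_{ab}^2$ contains the adjacent two-star products $\sum_{\ell\neq\ell'}\t Y_{a\ell}\t Y_{a\ell'}$ that resum to $\t V$. Expanding both sums in the monomial basis rewrites them through $\t V$, $\t E$, their expectations, and higher-degree centred polynomials; the coefficient of $\t V$ (direct plus feedback) becomes $-\lambda a_+/\sigma_V$ after normalization, the feedback supplying the two-star correction $2\t q^2\sum_l\beta_l s_l\t p^{e_l-1}$ in the numerator of $a_+$, the constant term fixes $\mu_{\t V}$ through $\E(W\mid\t E=0)\approx0$, and the $\t E$-linear part is the individually large contribution responsible for $R_{1,\pm}=O_r(n^{1/2})$ and for the $O(n^{-1/2})\t E$ summand of $\t R_{1,\pm}$. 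For $M_{2,\pm}$ the leading term $\tfrac{\t p}{N\sigma_V^2}\sum_e(1-Y_e)D_{ab}^2$ forces the second moment of the centred degree sum \emph{under the ERGM}, whose off-diagonal covariances (edges sharing a vertex) are of the same order as the diagonal and resum to $\E D_{ab}^2\approx 2n\t p\t q/(1-2\t q\sum_l\beta_l s_l\t p^{e_l-1})$; this inflation factor is precisely what makes the choice of $\sigma_V^2$ give $M_{2,\pm}=\lambda(1+R_{2,\pm})$, and it is the origin of the denominator $1-2\t q\sum_l\beta_l s_l\t p^{e_l-1}$ in all three normalizing quantities, with $\lambda=2(1-2\t q\sum_l\beta_l s_l\t p^{e_l-1})/N$ being the unique normalization consistent with $a_++a_-=1$.

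The announced cancellation comes from the cleaner identity $\E(\Delta W\mathbbm{1}_{\Delta Y=+1}\mid G)+\E(\Delta W\mathbbm{1}_{\Delta Y=-1}\mid G)=\tfrac{1}{N\sigma_V}\sum_e(p_e-Y_e)D_{ab}$: since $D_{ab}$ does not involve $Y_e$ while $\E(Y_e\mid(Y_f)_{f\neq e})=p_e$, each summand is conditionally centred, so after extracting the $-\lambda W$ part the remaining near-martingale sum is of much smaller order, upgrading the individual $R_{1,\pm}=O_r(n^{1/2})$ to $R_{1,+}+R_{1,-}=O_r(n^{-1/2})$. All remaining errors---fluctuations of the quadratic and cubic functionals $\sum_e(1-Y_e)D_{ab}^2$ and $\sum_e|D_{ab}|^3$, the Taylor remainder of the logistic link, and the replacement of degree sums by their means---would be controlled in $L^r$ by the higher-order concentration inequalities for low-degree polynomials of weakly dependent ERGM edges, together with \cref{L2} to license the subcritical expansions. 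I expect the real work to lie in (i) verifying that the direct and feedback contributions combine into \emph{exactly} the claimed coefficients, so that $a_\pm,\lambda,\mu_{\t V},\sigma_V^2$ are correct rather than off by a spurious lower-order term, and (ii) securing the cancellation $R_{1,+}+R_{1,-}=O_r(n^{-1/2})$ by tracking signs through the conditionally centred sum above rather than bounding $R_{1,+}$ and $R_{1,-}$ separately.
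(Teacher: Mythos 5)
Your plan is, in all essentials, the proof the paper gives: the same Glauber exchangeable pair, the same logistic Taylor expansion of the conditional edge probability (licensed by \cref{L2}), the same split of the coefficient of $\t V$ into a direct part ($\t p\sum_e(1-Y_e)D_{ab}$, the paper's $I_{11}$) and a feedback part (the $D_{ab}^2$ term produced by the linear response, the paper's $I_{12}$ via $\partial_{ij}\bar{H}_A$), and the same mechanism behind $R_{1,+}+R_{1,-}=O_r(n^{-1/2})$ --- in the paper the individually large $\t E$-terms of $I_{11}$ and $J_{11}$ cancel exactly, and the constant is fixed by $\E(M_{1,+}+M_{1,-})=0$. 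Your identity $\E(\Delta\t V\,\mathbbm{1}_{\Delta Y=1}\mid G)+\E(\Delta\t V\,\mathbbm{1}_{\Delta Y=-1}\mid G)=N^{-1}\sum_e D_e(p_e-Y_e)$ and your value of $\E D_{ab}^2$ are both correct and consistent with \cref{eq:sigv}.

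The one step that would fail as written is your closing error-control clause. Higher-order concentration controls the \emph{fluctuations} $\t K-\E\t K$ of the centred subgraph counts that appear after expanding $D_{ab}^2$, $D_{ab}^3$ and the non-two-star part of the linear response, but it says nothing about their \emph{means}, and these are not negligible for free: for a centred count $\t K$ on $v$ vertices with at least two edges, the correlation-decay bound \cref{eq:GN84} alone gives only $\E\t K=O(n^{v-1})$, a full power of $n$ worse than what the argument needs once $v\geq 4$ (e.g.\ the three-star and the four-vertex path $\t\sqcap$ arising from $\t Y_{ij}D_{ij}^2$ and from $(\partial_{ij}\bar H_A)^2$ must have mean $O(n^{5/2})$, not $O(n^3)$). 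This is exactly why the paper proves \cref{lemmaB3} as a separate ingredient --- via \cref{GN} for $v=3$ and via an auxiliary exchangeable-pair linearity computation for the three-star when $v=4$ --- alongside \cref{LI} for the fluctuations and \cref{LH} for the decomposition $\partial_{ij}\bar H=\partial_{ij}\bar H_A+\partial_{ij}\bar H_B+\partial_{ij}\bar H_C$. Without a substitute for \cref{lemmaB3}, the terms you defer as ``higher-order and non-two-star'' (in particular the triangle part of the linear response and everything quadratic or cubic in $\partial_{ij}\bar H$) cannot be pushed into $O_r(n)$ at the $\t V$ scale, and the claimed orders of $R_{0,\pm}$, $R_{1,\pm}$ and $R_{2,\pm}$ do not follow.
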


We first use \cref{P1}, \cref{P2} and \cref{lem:lclt} to prove \cref{thm:twostar}.

\begin{proof}[Proof of \cref{thm:twostar}]
%\brown{We first prove a conditional CLT for $W$ defined in \cref{20001}.} 
It suffices to prove that the right-hand side of \cref{p211} is bounded by $C_\varepsilon n^{-1/2+\varepsilon}$.
Note that
 for any random variable $X$ and any $i$ such that $\P(\t E=i)>0$, we have
  \begin{align}\label{eq:cond1}
      \IE(X\mid \widetilde{E}=i)=\frac{\IE(X\mathbbm{1}_{\widetilde{E}=i})}{\mathbb{P}(\widetilde{E}=i)}\leq \frac{(\IE|X|^r)^{1/r} \mathbb{P}(\widetilde{E}=i)^{1-1/r}}{\mathbb{P}(\widetilde{E}=i)}=(\IE|X|^r)^{\frac{1}{r}}\mathbb{P}(\widetilde{E}=i)^{-\frac{1}{r}}.
  \end{align}

By \cref{lem:lclt}, we have 
\be{
\P(\widetilde{E}=0) = 1/\sqrt{2\pi \sigma_n^2}+O(n^{-\frac{9}{8}}),\quad \sigma_n^2\asymp N.
}
For any $\varepsilon>0$, we can choose a large enough $r$ such that
\ben{\label{eq:cond2}
\mathbb{P}(\widetilde{E}=i)^{-\frac{1}{r}}=O(n^{\varepsilon}).
}
Then, for $\mathscr{A}_0$ and $\mathscr{B}_0$, from
$R_{1,+}+R_{1,-}=O_{r}(n^{-1/2})$ and $R_{2,+}+R_{2,-}=O_{r}(n^{-1/2})$ proved in \cref{P2}, we obtain
\begin{align}\label{DeltaW}
    \IE(|R_{1,+}+R_{1,-}|\mid Y=0)\leq C_\varepsilon n^{-\frac{1}{2}+\varepsilon},\quad \IE(|R_{2,+}+R_{2,-}|\mid Y=0)\leq C_\varepsilon n^{-\frac{1}{2}+\varepsilon}.
\end{align}
From  \cref{LH},
\begin{align}\label{ha}
    \sum_{k\neq i,j}(\t Y_{ik}+\t Y_{jk})=O_{r}(n^{\frac{1}{2}}).
\end{align}
Together with the fact that $\sigma_V\asymp n^{3/2}$, we obtain
\begin{align*}
    \Delta W=N^{-1}\sigma_{V}^{-1}\sum_{i<j} \sum_{k\neq i,j}(\t Y_{ik}+\t Y_{jk})(\t Y_{ij}'-\t Y_{ij})=O_{r}(n^{-1}).
\end{align*}
Moreover, it implies $\lambda^{-1}\mathscr{C}_0\leq C_\varepsilon n^{-1+\varepsilon}.$
For $\mathscr{D}_0$,  noting that $a_+=O(1)$, $\IE R_{1,+}=O(n^{1/2}), |\Delta W|=O_{r}(n^{-1})$ and $R_{0,\pm}=O_{r}(n^{-1})$,  we obtain
\begin{align}\label{D0}
\mathscr{D}_0\leq C_\varepsilon n^{-\frac{1}{2}+\varepsilon}.
\end{align}
For $\mathscr{E}_0$ and $\mathscr{F}_0$, taking $\Delta \t R_{1,+}=O_{r}(n^{-1/2})$ and $\Delta  R_{2,+}=O_{r}(n^{-1/2})$ into consideration and by similar arguments as for \cref{D0}, we obtain
\begin{align*}
    \mathscr{E}_0\leq C_\varepsilon n^{-\frac{1}{2})+\varepsilon},\quad \mathscr{F}_0\leq C_\varepsilon n^{-\frac{1}{2}+\varepsilon}.
\end{align*}
By \cref{lem:lclt}, we have 
\begin{equation}\label{r0}
    r_0=1+O(n^{-\frac{1}{8}}).
\end{equation}
 Now by \cref{P1}, we conclude that 
\begin{align*}
    d_{\mathcal{W}}((W\mid \widetilde{E}=0),Z)\leq C_{\varepsilon}n^{-\frac{1}{2}+\varepsilon}.
\end{align*}
% With the above CCLT, we can now complete the proof of the theorem.
%And from \cref{muv,sigmav,Ev},
%\begin{align*}
%    W=\frac{\t V-\IE \t V}{\sigma_{\t V}}=\frac{\t V-\mu_{\t V}}{\sigma_{\t V}}+O(n^{-1/2}),
%\end{align*}
%which implies 
%\begin{align*}
%    d_{\mathcal{W}}\left(\left(\frac{\t V-\mu_{\t V}}{\sigma_{\t V}}\mid \widetilde{E}=0\right),Z\right)\leq C_{\varepsilon}n^{-1/2+\varepsilon}.
%\end{align*}
%Moreover, it is easy to obtain
%    \begin{align*}
%        \mathcal{L}(V\mid E=n\t p)=\mathcal{L}\left(\sum_{i<j,k\neq i,j}(\t Y_{i,k}+\t p)(\t Y_{j,k}+\t p)\mid \t E=0 \right)= \mathcal{L}(\t V+ N(n-2)\t p^2 \mid \t E=0 ),
%    \end{align*}
%    where $\mathcal{L}(\cdot|\cdot)$ denotes the conditional law. Hence 
%we complete the proof of \eqref{eq:thm1}. 
\end{proof}

In the remainder of this section, we prove \cref{P2}. Its proof requires \cref{L2} and some technical lemmas (\cref{LH,LI,lemmaB3}). Their proofs are given in \cref{sec:lem1} and \cref{sec:prooflemma}.

We identify the random graph $G$ following \cref{eq:ERGM} on $n$ vertices $\{1,\dots, n\}$ with its edge indicators 
$Y=\{Y_{e}, e\in\{ (i,j), 1\leq i<j\leq n\}\}$ where $Y_e = 1$ if the
edge $e$ is present in $G$ and $Y_e = 0 $.
The exchangeable pair of random graphs $(G, G')$ defined above \cref{P2} 
is related to the discrete-time Glauber dynamics associated with the ERGM as follows.
%In all technical proofs, we need to use Heat-bath Glauber dynamics (GD), so we first introduce GD on ERGM. In order to use the above lemma, we define a natural (discrete time) Heat-bath Glauber dynamics (GD) associated with the ERGM.  
%Let $K_n$ be a complete graph with vertex set $\{1,2,\dots, n\}$. 
%We identify a random graph $G\in \mathcal{G}_n$ with $Y = (Y_e)_{e\in \mathcal{E}(Kn)}$ with $Y_e = 1$ if the (undirected) edge $e$ is present in $G$ and $Y_e = 0 $, otherwise.  
Given the current state $Y$, $e$ is uniformly chosen from $\{ (i,j), 1\leq i<j\leq n\}$ and $Y_e$ is resampled to $Y_e'$ according to its conditional distribution under the ERGM \cref{eq:ERGM} given $(Y_f)_{f\neq e}$. The resulting random graph which differs at most by one edge from $G$ is denoted by $G'$. 
%Then $(G, G')$ is an exchangeable pair.

From \cref{eq:ERGM}, it can be verified that given $e$ is chosen to be updated, one has the following transition
probabilities:
\begin{align}\label{tran}
    \P(y,y_{e+})=\frac{\exp(\partial_e H(y))}{1+\exp(\partial_e H(y))},\quad  \P(y,y_{e-})=\frac{1}{1+\exp(\partial_e H(y))},
\end{align} 
where $y_{e+}$ ($y_{e-}$ resp.) is the same as $y$ except that $y_e=1$ ($y_e=0$ resp.),
%$H(x)=\sum_{l=1}^m \beta_in^{2-v_l}\operatorname{Hom}(H_l, x),$
  \begin{align}\label{r}
      \partial_e H(y)=&\sum_{l=1}^m \beta_ln^{2-v_l}\operatorname{Hom}(H_l, y, e)
    \end{align}
and $\operatorname{Hom}(H_l, Y, e)=\operatorname{Hom}(H_l, G, e)$ (recall we identify $G$ with $Y$) denotes the number of homomorphisms of $H_l$ into $G$ but requiring
that an edge of $H_l$ must be mapped to $e$ (no matter the edge $e$ is present in $G$ or not).
% Let $Y_{e_1}, \ldots, Y_{e_k}$ be the set of $k\geq 2$ distinct edge indicators. 
% Then we have that 
% \begin{equation}
%     \E \prod_{i=1}^k Y_{e_i} - \widetilde{p}^k= \sum_{r=1}^k \sum_{\{I_{1},\ldots i_r\}\subset\{1, \ldots, k\} } \widetilde{p}^{k-r} \E \prod_{s=2}^r \widetilde{Y}_{e_{i_s}}
% \end{equation}
% From \cite{ganguly2019} we know that

For $e=(i,j)$, we do an approximate centering and let (recall \cref{varph1})
\ben{\label{eq:pijbH}
\partial_{ij} \bar{H}= \partial_{ij} \bar{H}(Y)=\partial_e H(Y)-2\Phi_\beta(p).
}

We first argue that $\E [\partial_{ij} \bar{H}]\approx 0$.
From \cite[Eqs.(34)\&(84)]{ganguly2019}, for any fixed $ k>1$ and distinct edges $l_1, \cdots, l_k$, we have, in the subcritical region,
\begin{align}\label{GN}
    \left|\IE(Y_{l_{1}}|Y_{l_{2}},\ldots,Y_{l_{k}}) - \IE Y_{l_{1}} \right| \leq Cn^{-1}
\end{align}
and
\begin{align}\label{eq:GN84}
    \left|\IE(Y_{l_{1}}\cdots Y_{l_{k}}) - (\IE Y_{l_{1}})\cdots (\E Y_{l_k}) \right| \leq Cn^{-1}.
\end{align}
For any $H_l$ with $e_l$ edges, counting the number of homomorphisms and using \cref{eq:GN84}, we obtain
%\be{
%\operatorname{Hom}(H_l, G, (i,j))=2\sum_{\substack{e_1,\cdots,e_{e_l-1}:\\e_1\cup\cdots\cup e_{e_l-1}\cup (i,j)\cong H_l}}Y_{e_1}\cdots Y_{e_{e_l-1}}.
%}
%where $\{Y_e, e\in \mathcal{E}(K_n)\}$ is the edge indicator of $G$. 
%From (84) in \cite{ganguly2019}, we get for any $1\leq l\leq m$,
    \begin{align}\label{meandiff}
        |\IE\operatorname{Hom}(H_l, G, (i,j)))-2n^{v_l-2}e_l\widetilde{p}^{e^l-1}|
        %=&2\sum_{\substack{e_1,\cdots,e_{e_l-1}\\e_1\cup\cdot\cup e_{e_l-1}\cup e_{ij}\cong H_l}}\left(\IE Y_{e_1}\cdots Y_{e_{e_l-1}}-\widetilde{p}^{e_l-1}\right)\notag\\
        \leq Cn^{v_l-2}n^{-1},
    \end{align}
    and therefore (recall \cref{varph1})
\begin{align}\label{meandiffbar}
    {\IE(\partial_{ij} \bar{H})}
    =&\sum_{l=1}^m \beta_ln^{2-v_l}\bigg[(\IE\operatorname{Hom}(H_l, G, (i,j))-2n^{v_l-2}e_l\widetilde{p}^{e^l-1})\notag\\&\qquad \qquad +(2n^{v_l-2}e_l\widetilde{p}^{e^l-1})-(2n^{v_l-2}e_lp^{e^l-1})\bigg]\notag\\
    =&O(n^{-1}),
\end{align}
where we used \cref{meandiff,eqmu} in the last equality.
We conclude that
\begin{align*}
    \E[\partial_{ij} \bar{H}]=O(n^{-1}).
\end{align*}

Applying the Hoeffding decomposition under weak dependence (\cite{gotze2019higher,Sam2020}), we write (see details in \cref{sec:prooflemma})
\begin{align}\label{eq:partialABC}
    \partial_{ij} \bar{H}=\partial_{ij}\bar{H}_A+\partial_{ij}\bar{H}_B+\partial_{ij}\bar{H}_C+O(n^{-1}),
\end{align}
where 
\begin{align}\label{Ha}
\partial_{ij}\bar{H}_A&=\sum_{s\neq i,j}\frac{2\sum_{l=2}^m \beta_ls_l\widetilde{p}^{e_l-2}(\widetilde{Y}_{is}+\widetilde{Y}_{js})+6\sum_{l=2}^m\beta_lt_l\widetilde{p}^{e_l-3}(\widetilde{Y}_{is}\widetilde{Y}_{sj}-\textit{mean})}{n},\\\label{Hb}
    \partial_{ij}\bar{H}_B&=\frac{\sum_{\mathcal{K}\subset \mathcal{I}:\substack{v(\mathcal{K}\cup (i,j))= 4\\(i,j)\notin \mathcal{K}}}O(1)(\prod_{s\in \mathcal{K}}\t Y_{s}-\textit{mean})}{n^{v(\mathcal{K}\cup (i,j))-2}}, \;\mathcal{I}=\{(k,l): 1\leq k<l \leq n\},
\end{align}
$s_l$ and $t_l$ are the number of two stars and triangles, respectively, in $H_l$, the sum in \cref{Hb} is over all graphs $K$ without isolated vertices which do not contain the edge $(i,j)$, 
and we always use mean to denote the expectation of the random variable in front of it.
In the following, we also regard any edge set $\mathcal{K}$ as a subgraph in $G$.
In fact, there is a  fixed upper bound on $v(\mathcal{K})$ in \cref{Hb} depending on $H_1,\dots, H_m$ in the definition of the ERGM in \cref{eq:ERGM}. %Therefore, the sum $\sum_{\mathcal{K}}$ is over a fixed number of terms. 
We omit the upper bound for notational convenience. This applies to all the $\sum_{\mathcal{K}}$ below.

%Below are some crucial lemmas  with the proofs deferred to the \cref{sec:prooflemma}.
%In the other hand, from the higher-order concentration inequalities in Sambame and Sinulis \cite{Sam2020}, we obtain

We will prove the following lemmas in \cref{sec:prooflemma}, which involve tricky arguments using the Poincar\'e inequality and higher-order concentration inequalities for ERGMs.
\begin{lemma}\label{LH}
     In the subset $B$ of the subcritical region considered in \cref{thm:twostar}, we have
\begin{align}\label{LH1}
\partial_{ij}\bar{H}_A =O_{r} (n^{-\frac{1}{2}}),\quad  \partial_{ij}\bar{H}_B= O_{r} (n^{-1}),\quad \partial_{ij}\bar{H}_C=O_{r} (n^{-\frac{3}{2}}).
     \end{align} 
%\red{$\sum_{l=2}^m$; $s\ne i,j$ conflicts with $s_l$? $O_{r}(n^{-1.5})$ comes from the 2nd term on the right-hand side of \cref{hof2}?}
 \end{lemma}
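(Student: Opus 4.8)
\textbf{Proof plan for \cref{LH}.}
The plan is to estimate each of the three pieces $\partial_{ij}\bar{H}_A$, $\partial_{ij}\bar{H}_B$, $\partial_{ij}\bar{H}_C$ in $L^r$ separately, exploiting two features: the explicit combinatorial structure displayed in \cref{Ha,Hb}, and the higher-order concentration inequalities for ERGMs available in the subcritical region. The common mechanism is that each term is a centered sum of products of the centered edge indicators $\t Y_s$, normalized by an appropriate power of $n$, and the number of summands together with the concentration estimates will dictate the order of the $L^r$ norm. Throughout I would use the $O_r(\cdot)$ notation from \cref{P2} and the fact, repeatedly invoked in the paper, that all constants and $L^r$ norms are uniform over the compact set $B$.

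First, for $\partial_{ij}\bar{H}_A$ in \cref{Ha}, I would treat the linear part $\sum_{s\ne i,j}(\t Y_{is}+\t Y_{js})/n$ and the centered quadratic part $\sum_{s\ne i,j}(\t Y_{is}\t Y_{sj}-\textit{mean})/n$ separately. Each summand is bounded and centered (after subtracting its mean), and there are $O(n)$ of them; by the higher-order concentration inequalities for ERGMs in the subcritical region (which quantify deviations of such centered subgraph-type sums, in the spirit of \cref{eq:GN84} and the concentration machinery behind it), a sum of $n$ weakly dependent centered terms divided by $n$ has $L^r$ norm of order $n^{1/2}/n = n^{-1/2}$. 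This yields $\partial_{ij}\bar{H}_A = O_r(n^{-1/2})$. For $\partial_{ij}\bar{H}_B$ in \cref{Hb}, the key bookkeeping is that the summation index $\mathcal{K}$ ranges over subgraphs with $v(\mathcal{K}\cup(i,j))=4$, so each summand is normalized by $n^{v(\mathcal{K}\cup(i,j))-2}=n^2$, while the number of such $\mathcal{K}$ is $O(n^2)$ and each centered product $\prod_{s\in\mathcal{K}}\t Y_s-\textit{mean}$ is bounded; the same concentration estimate gives fluctuations of order $n$ for the (doubly-indexed) centered sum, so after dividing by $n^2$ we get $O_r(n^{-1})$. The term $\partial_{ij}\bar{H}_C$ (whose explicit form is determined by the Hoeffding decomposition referenced at \cref{eq:partialABC}) corresponds to higher-order kernels supported on configurations with more vertices, carrying a larger normalizing power of $n$ relative to the number of summands, and an analogous counting-plus-concentration argument yields $O_r(n^{-3/2})$.

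The main obstacle will be making the concentration step fully rigorous and uniform in $r$ and in $B$: unlike for independent edges, the $\t Y_s$ are dependent under the ERGM, so one cannot simply invoke classical moment inequalities for sums of independent or martingale-difference terms. The plan here is to rely on the higher-order concentration inequalities for ERGMs in the subcritical region, obtained via the Poincar\'e inequality (as announced in the discussion preceding the lemma and developed in \cref{appendc}), which control the $L^r$ norms of polynomials in the edge indicators by the $L^r$ norms of their discrete gradients. Applying such an inequality to each homogeneous piece of the Hoeffding decomposition reduces the estimate to counting the number of vertices appearing in the relevant kernels and the corresponding normalization, at which point the orders $n^{-1/2}, n^{-1}, n^{-3/2}$ fall out of the combinatorics. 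Care is needed to verify that the centering (the subtraction of \textit{mean}) is what makes the gradient bounds effective and that the dependence structure does not inflate the number of effective vertices beyond what the normalization compensates.
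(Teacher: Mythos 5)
Your plan follows essentially the same route as the paper: decompose $\partial_{ij}\bar H$ via the Hoeffding decomposition into homogeneous pieces indexed by the number of additional vertices, apply the higher-order concentration inequality $f_{d,A}=O_r(\|A\|_2)$ obtained from the Poincar\'e inequality in \cref{appendc}, and read off the orders $n^{-1/2}$, $n^{-1}$, $n^{-3/2}$ from the count of summands versus the normalization. The only detail the paper adds that you leave implicit is that $\partial_{ij}\bar H_C$ also absorbs the $O(1/n)$ remainders from completing each centered product to a full Hoeffding kernel $g_I$, which contribute $O(1/n)\cdot O_r(n^{-1/2})=O_r(n^{-3/2})$ and hence do not change the stated order.
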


Recall from above \cref{conj1} that we use $K$ to denote simple graphs without isolated vertices and for any $K$, we use $\t K$ to denote the number of (not necessarily induced) copies of $K$ in $G$ except that each edge
indicator $Y_s$, is replaced by $\t Y_s$. We also denote by $v(K)$ the number of vertices of $K$.

 \begin{lemma}\label{lemmaB3}
 Consider the ERGM \cref{eq:ERGM} with parameters in the subset $B$ of the subcritical region considered in \cref{thm:twostar}.
   For any graph $K$, if $v(K)=3$, we have
 \begin{align*}
     \IE\widetilde{K}=O(n^2).
 \end{align*}
 Moreover, if $v(K)\geq 4$, we have
\begin{align*}
     \IE\widetilde{K}=O(n^{v-2}).
 \end{align*}
 \end{lemma}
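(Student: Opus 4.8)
The plan is to estimate $\IE\widetilde{K}$ by expanding the product of centered edge indicators $\widetilde{Y}_s=Y_s-\t p$ and controlling the resulting sum of covariance-type terms using the quantitative independence estimate \cref{eq:GN84} from \cite{ganguly2019}. Recall that
\be{
\widetilde{K}=\sum_{\text{copies of }K}\prod_{s\in E(K)}\widetilde{Y}_s,
}
where the outer sum ranges over all embeddings of $K$ into the labeled vertex set, so there are $(n)_{(v(K))}/\operatorname{Aut}(K)=O(n^{v(K)})$ copies. A naive bound would give $\IE\widetilde{K}=O(n^{v(K)})$ using $|\widetilde{Y}_s|\leq 1$; the content of the lemma is that the centering improves this to $O(n^{v-2})$ (and to $O(n^2)$ when $v=3$, which is $O(n^{v-1})$, reflecting the fact that one cannot center below the triangle level for three-vertex graphs). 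First I would fix a single copy and write, using \cref{eq:GN84} with $k=e(K)$,
\ben{\label{eq:plan1}
\bbbabsl\IE\prod_{s\in E(K)}Y_s-\prod_{s\in E(K)}\IE Y_s\bbbabsr\leq Cn^{-1},
}
and then observe that $\IE\prod_{s}\widetilde{Y}_s$ can be written via inclusion–exclusion as an alternating sum over subsets $S\subseteq E(K)$ of terms $(-\t p)^{e(K)-|S|}\IE\prod_{s\in S}Y_s$.

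The key step is the cancellation that the centering produces. For a single copy, if the edges of $K$ were genuinely independent then $\IE\prod_s\widetilde{Y}_s=\prod_s\IE\widetilde{Y}_s=0$ since $\IE Y_s=\t p$. The approximate independence \cref{eq:GN84} therefore forces each single copy to contribute $O(n^{-1})$ rather than $O(1)$: applying \cref{eq:plan1} to every sub-product $\IE\prod_{s\in S}Y_s$ and comparing with $\prod_{s\in S}\t p$, the inclusion–exclusion sum telescopes against $\prod_s(\t p-\t p)=0$ up to the accumulated errors, each of size $O(n^{-1})$. Summing the per-copy bound $O(n^{-1})$ over the $O(n^{v})$ copies would give only $O(n^{v-1})$, which is exactly the bound claimed for $v=3$. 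To reach the stronger $O(n^{v-2})$ for $v\geq 4$, I would exploit a second source of smallness: copies in which the $v$ vertices are not ``tightly linked'' decouple into near-independent blocks whose centered expectations vanish to higher order. Concretely, I would partition the copies according to which pairs of edges share a vertex, and show that for $v\geq 4$ there is always at least one additional factor of $n^{-1}$ beyond the first, because a connected graph on $v\geq 4$ vertices that is centered edgewise cannot be reconstructed from a single pairwise correlation — one needs at least two independent ``correction'' events, each contributing $O(n^{-1})$ via \cref{eq:GN84}, while only $O(n^{v})$ copies are available.

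A cleaner route to the same conclusion, which I would adopt to avoid fragile combinatorics, is to use the Hoeffding-type decomposition under weak dependence of \cite{Sam2020,gotze2019higher} that the paper already invokes for \cref{eq:partialABC}: write each $\widetilde{Y}_s$ in terms of its projection onto the relevant conditional expectations, so that $\widetilde{K}$ becomes a sum of generalized $U$-statistic components whose orders are governed by $v(K)$. The degenerate (fully centered) leading component of a $v$-vertex subgraph count has standard deviation of order $n^{v-2}$ in the subcritical regime — this is the same scaling that underlies $\sigma_V\asymp n^{3/2}$ for the two-star ($v=3$) used repeatedly in the proof of \cref{thm:twostar} — and the mean of $\widetilde K$ picks up contributions only from the non-degenerate lower-order pieces, which for $v\geq 4$ live at order $n^{v-2}$ and for $v=3$ at order $n^{2}$ (the triangle/two-star level that cannot be centered away). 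I expect the main obstacle to be making the $v=3$ versus $v\geq 4$ dichotomy rigorous and uniform over $B$: one must track that the single unavoidable $O(n^{-1})$ correction from \cref{eq:GN84} is genuinely the \emph{only} obstruction when $v=3$, whereas an extra gain is always available when $v\geq 4$, and that all constants remain bounded on the compact subset $B$ of the subcritical region — for which I would lean on the uniformity of the \cite{ganguly2019} estimates already noted in the proof.
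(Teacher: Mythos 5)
Your treatment of the $v=3$ case is sound and coincides with the paper's: per copy, \cref{eqmu} and \cref{GN} (equivalently \cref{eq:GN84}) give $\IE\t Y_{ik}\t Y_{jk}=O(n^{-1})$ and $\IE\t Y_{ik}\t Y_{jk}\t Y_{ij}=O(n^{-1})$, and summing over $O(n^3)$ copies yields $O(n^2)$. The gap is in the case $v\geq 4$. There you need the per-copy bound $\IE\prod_{s\in\mathcal{E}(K)}\t Y_s=O(n^{-2})$, but the only quantitative input you invoke, \cref{eq:GN84}, delivers exactly one factor of $n^{-1}$, and your inclusion--exclusion telescoping cannot produce more: it compares each sub-product $\IE\prod_{s\in S}Y_s$ with $\prod_{s\in S}\t p$ and accumulates errors of size $Cn^{-1}$ with no cancellation among them. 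Your claimed ``second source of smallness'' (that a centered connected graph on $v\geq4$ vertices requires ``two independent correction events, each contributing $O(n^{-1})$'') is an assertion, not an argument; note that it is \emph{not} true that each additional centered edge costs a factor $n^{-1}$ (the triangle already has $\IE\t Y_{12}\t Y_{23}\t Y_{13}=\Theta(n^{-1})$, not $O(n^{-2})$), so the mechanism cannot be purely edge-counting and must be substantiated. Your fallback via the Hoeffding decomposition is circular: the components $g_I$ in \cref{hoff} are corrected by terms of the form $\IE\prod_{l\in J}(Y_l-\t p)$ over sub-edge-sets $J$, i.e.\ precisely the expectations of smaller centered subgraph counts that the lemma is trying to bound, and the decomposition by itself says nothing about $\IE\t K$ (only about fluctuations, as in \cref{LI}).

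For contrast, the paper obtains the $v\geq4$ bound by a genuinely different mechanism. For $v\geq5$ it isolates one edge, writes $\IE\t K=\sum_{i_1\neq i_2}\IE\,\t Y'_{i_1,i_2}(S_{i_1,i_2}-\IE S_{i_1,i_2})$ with $\t Y'$ the Glauber conditional resampling, Taylor-expands the logistic conditional mean so that the leading term is $p-\t p=O(n^{-1})$ plus $O(\partial_{i_1,i_2}\bar H)=O_r(n^{-1/2})$, and pairs the latter against the higher-order concentration bound $S_{i_1,i_2}-\IE S_{i_1,i_2}=O_r(n^{(v-2)/2}+n^{v-7/2})$ from \cref{highorder}; this yields $O(n^{v-2}+n^{v/2+1/2})$, which suffices only when $v\geq5$. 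The case $v=4$ is exactly where this fails (it gives only $O(n^{5/2})$), and the paper handles it by a separate exchangeable-pair argument: an approximate linearity condition $\IE(\t\tstar'-\t\tstar\mid G)=-(3/N)\t\tstar+\mathrm{Err}$ whose expectation vanishes by exchangeability, from which $\IE\t\tstar$ is solved in terms of already-controlled quantities ($v=3$ and $v\geq5$ counts). Your proposal contains neither of these ingredients, and without them the claimed bound for $v\geq4$ is not established.
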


  \begin{lemma}\label{LI}
  Under the same assumption as in \cref{lemmaB3},
  for a graph $K$ with $v(K)=v\geq 2$, we have 
     \begin{align}\label{l22}
        \widetilde{K}-\IE\widetilde{K}=O_{r}(n^{v/2}+n^{v-2}).
    \end{align}
 \end{lemma}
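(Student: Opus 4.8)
The plan is to prove \cref{LI} by induction on the number of edges $e(K)$, using the Poincar\'e inequality for the Glauber dynamics of the ERGM and its $L^r$ form (as in \cref{appendc}) as the engine. Because the natural edge-derivative of a subgraph count is a \emph{rooted} count, I would first strengthen the statement to a rooted version: if a subset of the vertices of $K$ is held at prescribed labels and $w$ vertices remain free, then the corresponding rooted count $\widetilde K$ obeys $\widetilde K-\IE\widetilde K=O_r(n^{w/2}+n^{w-2})$, with \cref{LI} being the fully free case $w=v$. The base cases $e(K)\le1$ are direct: a single free edge is $\widetilde E$, for which $\|\widetilde E\|_r=O_r(n)$ by the subcritical concentration of edge counts, while an edge with one or two rooted endpoints is a sum of $O(n)$, respectively $O(1)$, centered and weakly dependent indicators.

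For the inductive step I would use that $\widetilde K$ is multilinear in the $\widetilde Y_s$, so resampling an ambient edge $e$ gives $\Delta_e\widetilde K=(\widetilde Y_e'-\widetilde Y_e)D_e$ with $D_e:=\partial_{\widetilde Y_e}\widetilde K$ the rooted count of degree $e(K)-1$ obtained by pinning a distinguished edge of $K$ onto $e$. Since $|\Delta_e\widetilde K|\le|D_e|$, the $L^r$ concentration inequality of \cref{appendc} yields
\be{
\bbnorm{\widetilde K-\IE\widetilde K}_r\le C_r\bbklrl\sum_e\|\Delta_e\widetilde K\|_r^2\bbklrr^{1/2}\le C_r\bbklrl\sum_e\|D_e\|_r^2\bbklrr^{1/2}.
}
Splitting $\|D_e\|_r\le\|D_e-\IE D_e\|_r+|\IE D_e|$, I would bound the fluctuation $\|D_e-\IE D_e\|_r$ by the induction hypothesis applied to $D_e$ (which has $w_e\le w$ free vertices), giving $\sum_e\|D_e-\IE D_e\|_r^2=O(n^w+n^{2w-4})$, and control the deterministic term through the mean estimates of \cref{lemmaB3} together with the correlation bounds \cref{GN,eq:GN84}, giving $\sum_e(\IE D_e)^2=O(n^w+n^{2w-4})$. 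Hence $\sum_e\|D_e\|_r^2=O(n^w+n^{2w-4})$, whose square root is $O(n^{w/2}+n^{w-2})$, closing the induction.

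The step I expect to be the main obstacle is the mean bound $\sum_e(\IE D_e)^2=O(n^w+n^{2w-4})$. A crude use of the pairwise estimate \cref{eq:GN84}, which only gives $\IE[\prod_{s\in S}\widetilde Y_s]=O(n^{-1})$ for any edge set $S$, is insufficient: it overestimates $\IE D_e$ for edges incident to rooted vertices and breaks the budget. What is needed is the sharper decay $\IE[\prod_{s\in S}\widetilde Y_s]=O(n^{-\lceil|S|/2\rceil})$ of the mixed moments of the centered indicators, a higher-order refinement of \cref{eq:GN84}, combined with vertex counting and the connectivity of $K$ (a copy on $w$ free vertices carries $\gtrsim w$ edges), so that the correlation-induced means $\IE D_e$ stay small enough to keep their total within the budget. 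Careful bookkeeping of how many free vertices each gradient pins --- two for an edge between free vertices, fewer for edges meeting rooted vertices --- is what makes the recursion close at exactly the two scales $n^{w/2}$ and $n^{w-2}$. A secondary, more routine point is to record the $L^r$ concentration inequality of \cref{appendc} uniformly over $B$ (the Glauber dynamics has spectral gap $\Theta(N^{-1})$ there) and to pass from $r\ge2$ to all $r\ge1$ by monotonicity of the $L^r$ norms.
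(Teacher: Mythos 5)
Your overall strategy --- iterating the first-order $L^r$ Poincar\'e inequality along edge-derivatives --- is in the same spirit as the paper's, but the execution differs in a way that creates a genuine gap, and you have correctly located where it sits. The paper does \emph{not} induct on the raw count: it first performs the Hoeffding-type decomposition \cref{hof2}, rewriting $\widetilde{K}-\IE\widetilde{K}$ as $f_K$ plus lower-order terms $f_{K\setminus\{e_1,\dots,e_s\}}$ ($s\geq 2$) that automatically carry $O(n^{-1})$ coefficients (these coefficients are products of expectations of sub-products of \emph{centered} indicators, controlled by \cref{eq:GN84}), and only then applies the higher-order concentration inequality \cref{highorder} to each recentered block; the two scales $n^{v/2}$ and $n^{v-2}$ come from the Euclidean norms $\|A\|_2$ of the coefficient tensors, with isolated vertices contributing multiplicity $n^{v_{\mathrm{iso}}}$ and $v_{\mathrm{iso}}\leq v-2$. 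This recentering is precisely what removes the need to control the means of the derivative counts, which is where your plan gets stuck.

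The specific fix you propose for that obstacle is false: the bound $\IE\bigl[\prod_{s\in S}\widetilde{Y}_s\bigr]=O(n^{-\lceil|S|/2\rceil})$ fails already for $S$ a triangle, where the claim would give $O(n^{-2})$ but the paper's own computation \cref{IET} shows $\IE\widetilde{\triangle}\asymp n^2$ over $\asymp n^3$ triangles, i.e.\ the typical triangle moment is $\Theta(n^{-1})$ whenever some $H_l$ contains a triangle. The correct mean estimates are structure-dependent (they are essentially rooted versions of \cref{lemmaB3}, and your argument in fact needs sums of \emph{absolute values} of such moments, which is stronger than what \cref{lemmaB3} asserts); moreover, the paper proves \cref{lemmaB3} \emph{using} \cref{highorder} and \cref{LH}, so importing those estimates into a proof of \cref{LI} risks circularity unless you set up a careful joint induction. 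Without a correct replacement for the moment bound, the term $\bigl(\sum_e(\IE D_e)^2\bigr)^{1/2}$ is not controlled (e.g.\ for a rooted derivative whose remaining edges contain a triangle attached to a root, the crude estimates overshoot the budget exactly as you feared), so the induction does not close as written.
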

 
%Turn to the proof for Theorem \ref{thm:twostar}.  We first prove the central limit theorem for the centered two-star $\t V_n=\sum_{i<j,k\neq i,j}\t Y_{ik} \t Y_{jk}$ by applying Proposition \ref{P1}. Set $X=\widetilde{V}_n-\IE \widetilde{V}_n $ and $Y=\widetilde{E}=\sum_{1\leq i<j\leq n}\t Y_{i,j}$. When there is no ambiguity, we omit the subscripts $n$ for $\widetilde{p}_n$, $V_n$ and $E_n$. From Lemma \ref{L2}, we get $|\td{p}-p|=O(n^{-1})$, then we have

Now we are ready to prove \cref{P2}.
\begin{proof}[Proof of \cref{P2}]
In this proof, we use $C$ to denote constants depending only on the subset $B$ of the subcritical region considered in \cref{thm:twostar}, whose value may differ from line to line, and use $O(1)$ to denote a non-random constant bounded by $C$.

Recall from \cref{20001} and \cref{eq:sigv} that
\ben{\label{eq:WtoV}
W=\frac{\t V-\mu_{\t V}}{\sigma_V},\quad \sigma_V\asymp n^{3/2}.
}
Define as in \cref{eq:Mlpm} that
\begin{align*}
    M_{l,\pm}(\t V,\t E)=
       \IE\left(({\t V}^{\prime}-{\t V})^l\mathbbm{1}_{\Delta \t E =\pm 1}\mid \t V, \t E\right), &\quad l=0,1,2.
\end{align*}

\textbf{Step 1.} 
We first prove \cref{eq:M0W}.  Note that if the edge $(i,j)$ is chosen to be updated in the Glauber dynamics, then $\mathbbm{1}_{\Delta \t E = 1}=\mathbbm{1}_{\t E'-\t E=1}=(1-Y_{ij})Y_{ij}'=(\t q-\t Y_{ij})Y_{ij}'$. Using Taylor's expansion for $e^x/(1+e^x)$ around $x=2\Phi_\beta(p)$ and recalling $p=\exp(2\Phi_\beta(p))/(1+\exp(2\Phi_\beta(p)))$ from \cref{Sub}, we obtain
 \besn{\label{eq:M0pV}
   M_{0,+}(\widetilde{V},\widetilde{E})=&\frac{1}{N}\IE\bigg[\sum_{i<j}(\widetilde{q}-\td{Y}_{ij})Y_{ij}^\prime\mid\td{V},\td{E}\bigg]\\
    =&\frac{1}{N}\IE\bigg[\sum_{i<j}(\widetilde{q}-\td{Y}_{ij})\frac{e^{\partial_{ij}H(G)}}{1+e^{\partial_{ij}H(G)}}\mid \td{V},\td{E}\bigg]\\
    =&\frac{1}{N}\IE\bigg[\sum_{i<j}(\widetilde{q}-\td{Y}_{ij})\left(p+p(1-p)\partial_{ij} \bar{H}+O(\partial_{ij} \bar{H})^2)\right)\mid \td{V},\td{E}\bigg]\\
=:& \t p\,\t q+I_{01}+I_{02}+I_{03}+O(n^{-1}).
}
where we used $p-\t p=O(n^{-1})$ in the last equality and
\begin{align*}
    I_{01}=&-\frac{p\widetilde{E}}{N},\\
    I_{02}=&\frac{p(1-p)}{N}\IE\bigg[\sum_{i<j}(\widetilde{q}-\td{Y}_{ij})\partial_{ij} \bar{H}\mid \td{V},\td{E}\bigg],\\
    I_{03}=&\frac{1}{N}\IE\bigg[\sum_{i<j}(\widetilde{q}-\td{Y}_{ij})O\left(\partial_{ij} \bar{H}\right)^2\mid \td{V},\td{E}\bigg].
\end{align*}
%Hence $M_{0,+}(\widetilde{V},\widetilde{E})=\t p\,\t q+I_{01}+I_{02}+I_{03}+O(n^{-1})$. 
Similarly,   we obtain
\begin{align*}
    M_{0,-}(\widetilde{V},\widetilde{E})=&\frac{1}{N}\IE\Big[\sum_{i<j}(\widetilde{p}+\td{Y}_{ij})(1-Y_{ij}^\prime)\big\mid\td{V},\td{E}\Big]\\=:&\t p\,\t q+J_{01}+J_{02}+J_{03}+O(n^{-1}),
\end{align*} where
\begin{align*}
    J_{01}=&\frac{q\widetilde{E}}{N};\\
    J_{02}=&-\frac{p(1-p)}{N}\IE\bigg[\sum_{i<j}(\widetilde{p}+\td{Y}_{ij})\partial_{ij} \bar{H}\mid \td{V},\td{E}\bigg];\\
    J_{03}=&\frac{1}{N}\IE\bigg[\sum_{i<j}(\widetilde{p}+\td{Y}_{ij})O(\partial_{ij} \bar{H})^2\mid \td{V},\td{E}\bigg].
\end{align*}
Recalling assumption \eqref{ua1}, we set
\begin{align}\label{lc1}
     Q=\t p\,\t q.
\end{align} 
Observe that $I_{01}=O_{r}(n^{-1})$, $J_{01}=O_{r}(n^{-1})$ since $\t E=O_{r}(n)$ from Lemma \ref{LI}. 
%And for  each  integers $s,t>0$, using (5.18) in \cite{fang2024},
%\begin{align*}
%\IE\left[\IE\left[\left|\partial_{ij} \bar{H}\right|^s\mid \td{V},\td{E}\right]^t\right]\leq &\IE\left[\IE\left[\left|\partial_{ij} \bar{H}\right|^{st}\mid \td{V},\td{E}\right]\right]\\=&\IE\left|\partial_{ij} \bar{H}\right|^{st}=O(n^{-st/2}).
%\end{align*}
From \cref{LH}, we have 
$I_{03}=O_{r}(n^{-1}), J_{03}=O_{r}(n^{-1})$.
Moreover, from  \cref{LH,lemmaB3,LI}, we have
\begin{align*}
    I_{02}
    =&\frac{p(1-p)}{N}\E\bigg(\sum_{K:v(K)\geq 2}\frac{O(1)\t K}{n^{v(K)-2}}\Big\mid \t V,\t E\bigg){+ \frac{p(1-p)}{N}\sum_{i<j}(\t q-\t Y_{ij})O(n^{-1})}\\
    =&O_{r}(n^{-1}),\\
    \text{and}\quad\quad\quad\quad&\\
     J_{02}
    =&\frac{p(1-p)}{N}\E\bigg(\sum_{K:v(K)\geq 2}\frac{O(1)\t K}{n^{v(K)-2}}\Big\mid \t V,\t E\bigg){+ \frac{p(1-p)}{N}\sum_{i<j}(\t p+\t Y_{ij})O(n^{-1})}\\
    =&O_{r}(n^{-1}).
\end{align*}
Therefore, we have $R_{0,+}=O_{r}(n^{-1})$ and $R_{0,-}=O_{r}(n^{-1})$.
Recalling \cref{eq:WtoV}, we have obtained in this step that
 \begin{align*}
    & M_{0,\pm}(W,\t E)=M_{0,\pm}(\t V,\t E)=\t p\,\t q+O_{r}(n^{-1}).
 \end{align*}

\textbf{Step 2.} Turning to $M_{1,\pm}(\widetilde{V},\widetilde{E})$, we have
\begin{align}\label{eq:M1plus}
      M_{1,+}=&(\widetilde{V},\widetilde{E})\notag\\=&\frac{1}{N}\IE\Big[\sum_{i<j}(\widetilde{q}-\td{Y}_{ij})Y_{ij}^\prime\sum_{k\neq i,j}(\td{Y}_{ik}+\td{Y}_{jk})\big\mid\td{V},\td{E}\Big]\notag\\
    =&\frac{1}{N}\IE\bigg[\sum_{i<j}(\widetilde{q}-\td{Y}_{ij})\sum_{k\neq i,j}(\td{Y}_{ik}+\td{Y}_{jk})\frac{e^{\partial_{ij}H(G)}}{1+e^{\partial_{ij}H(G)}}\Big\mid \td{V},\td{E}\bigg]\notag\\
    =&\frac{1}{N}\IE\Big[\sum_{i<j}(\widetilde{q}-\td{Y}_{ij})\sum_{k\neq i,j}(\td{Y}_{ik}+\td{Y}_{jk})\big(\t p+p(1-p)\partial_{ij} \bar{H}+O(1)(\partial_{ij} \bar{H})^2\notag\\
    &\quad\quad+O(\partial_{ij} \bar{H})^3+O(n^{-1})\big)\big\mid \td{V},\td{E}\Big]\notag\\
=:&I_{11}+I_{12}+I_{13}+I_{14},
\end{align}
where 
\begin{align}\label{m1p}
    I_{11}=&\frac{\t p}{N}\IE\Big[\sum_{i<j}(\widetilde{q}-\td{Y}_{ij})\sum_{k\neq i,j}(\td{Y}_{ik}+\td{Y}_{jk})\big\mid \td{V},\td{E}\Big]=\frac{2(n-2)\t p\td{q}\td{E}}{N}-\frac{2\t p\td{V}}{N},\\
    I_{12}=&\frac{p(1-p)}{N}\IE\Big[\sum_{i<j}(\widetilde{q}-\td{Y}_{ij})\sum_{k\neq i,j}(\td{Y}_{ik}+\td{Y}_{jk})\partial_{ij} \bar{H}\big\mid \td{V},\td{E}\Big],\notag\\
     I_{13}=&\frac{O(1)}{N}\IE\Big[\sum_{i<j}(\widetilde{q}-\td{Y}_{ij})\sum_{k\neq i,j}(\td{Y}_{ik}+\td{Y}_{jk})\left((\partial_{ij} \bar{H})^2+O(n^{-1})\right)\big\mid \td{V},\td{E}\Big],\notag\\
    I_{14}=&\frac{1}{N}\IE\Big[\sum_{i<j}(\widetilde{q}-\td{Y}_{ij})\sum_{k\neq i,j}(\td{Y}_{ik}+\td{Y}_{jk})O(\partial_{ij} \bar{H})^3\big\mid \td{V},\td{E}\Big].\notag
\end{align}
Similarly, we have
\begin{align*}
    M_{1,-}(\widetilde{V},\widetilde{E})=&-\frac{1}{N}\IE\Big[\sum_{i<j}(\widetilde{p}+\td{Y}_{ij})\big(1-Y_{ij}^\prime\big)\sum_{k\neq i,j}(\td{Y}_{ik}+\td{Y}_{jk})\big\mid\td{V},\td{E}\Big]\\
    =:&J_{11}+J_{12}+J_{13}+J_{14},
\end{align*}
where 
\begin{align}\label{m1m}
    J_{11}=&-\frac{2(n-2)\t p\,\t q\widetilde{E}}{N}-\frac{2\t q\widetilde{V}}{N},\\
    J_{12}=&\frac{p(1-p)}{N}\IE\Big[\sum_{i<j}(\widetilde{p}+\td{Y}_{ij})\sum_{k\neq i,j}(\td{Y}_{ik}+\td{Y}_{jk})\partial_{ij} \bar{H}\bigm| \td{V},\td{E}\Big];\notag\\
    J_{13}=&\frac{O(1)}{N}\IE\Big[\sum_{i<j}(\widetilde{p}+\td{Y}_{ij})\sum_{k\neq i,j}(\td{Y}_{ik}+\td{Y}_{jk})\big((\partial_{ij} \bar{H})^2+O(n^{-1})\big)\big\mid \td{V},\td{E}\Big]\notag\\
     J_{14}=&\frac{1}{N}\IE\Big[\sum_{i<j}(\widetilde{p}+\td{Y}_{ij})\sum_{k\neq i,j}(\td{Y}_{ik}+\td{Y}_{jk})O(\partial_{ij} \bar{H})^3\big\mid \td{V},\td{E}\Big],\notag
\end{align}
and the constant $O(1)$ in $J_{13}$ is the same as that in $I_{13}$. 

In the following, we will first calculate $M_{1,+}(\t V,\t E)+M_{1,-}(\t V,\t E)$ (partly for obtaining $\E \t V$), and then calculate $M_{1,+}(\t V,\t E)$ and $M_{1,-}(\t V,\t E)$, respectively. 

From \cref{eq:partialABC}, we have
\begin{align}\label{IJ12}
    I_{12}+J_{12}=&\frac{p(1-p)}{N}\IE\Big[\sum_{i<j}\sum_{k\neq i,j}(\td{Y}_{ik}+\td{Y}_{jk})(\partial_{ij}\bar{H}_A+\partial_{ij}\bar{H}_B+\partial_{ij}\bar{H}_C)\big\mid \td{V},\td{E}\Big]\notag\\
    &+\frac{O(1)}{Nn}\IE\Big[\sum_{i<j}\sum_{k\neq i,j}(\td{Y}_{ik}+\td{Y}_{jk})\big\mid \td{V},\td{E}\Big].
\end{align}
From \cref{LH}, we have 
\begin{align*}
    \E\big[\sum_{i<j}\sum_{k\neq i,j}(\td{Y}_{ik}+\td{Y}_{jk})\partial_{ij}\bar{H}_C\big]=N \cdot O_r(n^{1/2})\cdot O_r(n^{-3/2})=O_r(n).
\end{align*}
%$\sum_{k\neq i,j}(\td{Y}_{ik}+\td{Y}_{jk})=O_{r}(n^{1/2})$ and
and
\begin{align}\label{edge}
    \frac{1}{n}\IE\Big[\sum_{i<j}\sum_{k\neq i,j}(\td{Y}_{ik}+\td{Y}_{jk})\big\mid \td{V},\td{E}\Big]=\frac{1}{n}\big(2n+O(1)\big)\t E=O_{r}(n).
\end{align} 

%and applying Lemmas \ref{lemmaB3} and \ref{LI}, we have
%\begin{align}\label{IJ121}
%    \|\IE\bigg[\sum_{i<j}\sum_{k\neq i,j}(\td{Y}_{ik}+\td{Y}_{jk})\mid \td{V},\td{E}\bigg]\|_p=\|O(1)\t V\|_p= C_p N.
%\end{align}
%Therefore, the last term on the right-hand side of \cref{IJ12} is of the correct order \red{(i.e. belongs to $R_{1,+}+R_{1,-}$ in \cref{P2})} since it will be divided by $\sigma_V\asymp n^{3/2}$ in the end.  
We argue (up to \cref{eq:I12J12}) that the main contribution to $I_{12}+J_{12}$ comes from the term involving $\partial_{ij}\bar{H}_A$. Recall that each $H_l$ has $s_l$ two stars and $t_l$ triangles. From \cref{Ha}, we have
\begin{align*}
    &\IE\Big[\sum_{i<j}\sum_{k\neq i,j}(\td{Y}_{ik}+\td{Y}_{jk})\partial_{ij}\bar{H}_A\big\mid \td{V},\td{E}\Big]\notag\\
    =&\sum_{l=2}^m\IE\Big[\sum_{i<j}\sum_{k\neq i,j}(\td{Y}_{ik}+\td{Y}_{jk})n^{-1}\big(\sum_{s\neq i,j}\big(2\beta_ls_l\widetilde{p}^{e_l-2}(\widetilde{Y}_{is}+\widetilde{Y}_{js})\\
    &\quad\quad\quad+6\beta_lt_l\widetilde{p}^{e_l-3}(\widetilde{Y}_{is}\widetilde{Y}_{sj}-\textit{mean})\big)\big)\big|\td{V},\td{E}\Big]\notag\\
    =&\sum_{i<j}\IE\Big[\frac{2\sum_{l=2}^m\beta_ls_l\widetilde{p}^{e_l-2}}{n}\sum_{k\neq i,j}\sum_{s\neq i,j}(\t Y_{ik}+\t Y_{jk})(\t Y_{is}+\t Y_{js})\\
    &\qquad \qquad +\frac{6\sum_{l=2}^m\beta_lt_l\widetilde{p}^{e_l-3}}{n}\sum_{k\neq i,j}\sum_{s\neq i,j}(\t Y_{ik}+\t Y_{jk})(\t Y_{is}\t Y_{js})\notag\\
    &\qquad \qquad +O(n^{-1})\sum_{k\neq i,j}(\t Y_{ik}+\t Y_{jk})\Big|\td{V},\td{E}\big]\notag\\
    &=:I+II+III,\notag
\end{align*}
where we used $\textit{mean}=O(1/n)$ (cf. \cref{GN}) in the second equality.\\
For $I$, expanding the square, and using  
\begin{align}\label{star}
    \t Y_{ik}^2=(1-2\t p)Y_{ik}+\t p^2=(1-2\t p)\t Y_{ik}+\t p\,\t q,
\end{align} we obtain
\begin{align*}
I=&\IE\Big[\sum_{i<j}\sum_{l=2}^m\frac{2\beta_ls_l\widetilde{p}^{e_l-2}}{n}\sum_{\substack{k\neq s\\ k\neq i,j\\s\neq i,j}}(\t Y_{ik}\t Y_{is}+\t Y_{ik}\t Y_{js}+\t Y_{jk}\t Y_{is}+\t Y_{jk}\t Y_{js})\big|\td{V},\td{E}\Big]\\
    &+\IE\Big[\sum_{i<j}\sum_{l=2}^m\frac{2\beta_ls_l\widetilde{p}^{e_l-2}}{n}\sum_{k\neq i,j}((1-2\t p)(\t Y_{ik}+\t Y_{jk})+2\t p\,\t q+2\t Y_{ik}\t Y_{jk})\big|\td{V},\td{E}\Big]\\
    =&\sum_{l=2}^m 2\beta_ls_l\widetilde{p}^{e_l-2}\big[2\t V+4\t E^2/n+2(1-2\t p)\t E+ 2N\t p\,\t q\big]+O(n^{-1})\t V+O(n^{-1})\t E+O(n).
\end{align*}
For $II$, grouping terms according to $k$ equal $s$ or not, and using \eqref{star} again, we obtain
\begin{align*}
    II=&6n^{-1}\sum_{l=2}^m\beta_lt_l\widetilde{p}^{e_l-3}\IE \Big[\sum_{i<j}\sum_{k\neq i,j}\sum_{s\neq i,j}(\t Y_{ik}+\t Y_{jk})(\t Y_{is}\t Y_{js})\Big|\td{V},\td{E}\Big]\\
    =&\IE\Big[O(n^{-1})\sum_{i<j}\sum_{\substack{k\neq s:\\ k\neq i,j\\s\neq i,j}}(\t Y_{ik}+\t Y_{jk})(\t Y_{is}\t Y_{js})+O(n^{-1})\sum_{i<j}\sum_{k\neq i,j}(\t Y_{ik}+\t Y_{jk})(\t Y_{ik}\t Y_{jk})\big|\td{V},\td{E}\Big]\\
    =&\IE\big(O(n^{-1})\t \sqcap+O(n^{-1})\t V|\td{V},\td{E} \big)+O(1)\t E.
\end{align*}
where $  \t \sqcap:=\sum_{i<j}\sum_{\substack{k\ne l:\\ k\neq i,j\\l\ne i,j}}\t Y_{ik}\t Y_{kl}\t Y_{lj}$. \\
For $III$, from \eqref{edge}, we have
\begin{align*}
    III=O(1)\t E.
\end{align*}
Combining the expressions of $I$, $II$ and $III$, we obtain
\begin{align}\label{A2}
&\IE\Big[\sum_{i<j}\sum_{k\neq i,j}(\td{Y}_{ik}+\td{Y}_{jk})\partial_{ij}\bar{H}_A\big\mid \td{V},\td{E}\Big]\notag\\=&4N\t q\sum_{l=2}^m\beta_ls_l\widetilde{p}^{e_l-1}+4\sum_{l=2}^m\beta_ls_l\widetilde{p}^{e_l-2}\t V\notag\\
&+\mathbb{E}(O(n^{-1})\t E^2+ O(n^{-1})\t \sqcap+O(n^{-1})\t V|\t V,\t E)+O(1)\t E+O(n) \notag\\
=&4N\t q\sum_{l=2}^m\beta_ls_l\widetilde{p}^{e_l-1}+4\sum_{l=2}^m\beta_ls_l\widetilde{p}^{e_l-2}\t V+O_{r}(n),
\end{align}
where we use  Lemmas  \ref{lemmaB3} and \ref{LI}  in the last equality.

Now we turn to the term containing $\partial_{ij}\bar{H}_B$ on the right-hand side of \cref{IJ12}.  Multiplying  \cref{Hb} by $\t Y_{ik}$, grouping terms according to $(i,k)\notin \mathcal{K}$ or $(i,k)\in \mathcal{K}$, and in the latter case using \eqref{star}, we obtain
\begin{align*}
    &\IE\bigg[\sum_{i<j}\sum_{k\neq i,j}\td{Y}_{ik}\partial_{ij}\bar{H}_B\mid \td{V},\td{E}\bigg]\\
    =&\IE\Big[\sum_{i<j}\sum_{k\neq i,j}\td{Y}_{ik}\Big(\sum_{\mathcal{K}\subset\mathcal{I}:\substack{v(\mathcal{K}\cup (i,j))= 4\\(i,j)\notin \mathcal{K}}}O(n^{-v(\mathcal{K}\cup (i,j))+2})(\prod_{s: s\in \mathcal{K}}\t Y_{s}-\textit{mean})\Big)\big\mid\td{V},\td{E}\Big]\\
    =&\IE\Big[\sum_{i<j}\sum_{k\neq i,j}\Big(\sum_{\mathcal{K}\subset\mathcal{I}:\substack{v(\mathcal{K}\cup (i,j)\cup (i,k))=5\\(i,j)\notin \mathcal{K},k\notin \mathcal{V}(\mathcal{K})}}O(n^{-v(\mathcal{K}\cup (i,j)+2})\prod_{s: s\in \mathcal{K}\cup (i,k)}\t Y_{s}\Big)\big|\td{V},\td{E}\Big]\\
    &+\IE\bigg[\sum_{i<j}n^{-2}\bigg(\sum_{\mathcal{K}\subset\mathcal{I}:\substack{v(\mathcal{K}\cup (i,j))= 4\\(i,j)\notin \mathcal{K}}}O(1)\Big(\prod_{s: s\in \mathcal{K}}\t Y_{s}+\sum_{k\in \mathcal{V}(\mathcal{K})\backslash \{i,j\}}n^{v_{\rm{iso}}}\prod_{s: s\in \mathcal{K}\backslash (i,k)}\t Y_{s}\Big)\bigg)\Big|\td{V},\td{E}\bigg]\\
    &+\IE\Big[O(n^{-1})\sum_{i<j}\sum_{k\neq i,j}\td{Y}_{ik} \Big|\td{V},\td{E}\Big],
\end{align*}
where $v_{\rm{iso}}$ denotes the number of isolated vertices in $\mathcal{K}\backslash (i,k)$ without counting in the possibly isolated vertex $\{i\}$ (because of the summation over $i$ in front). 
%$v_{\rm{iso}}(H)$ denote the number of isolate vertices in $H$. 
From \cref{lemmaB3,LI}, we obtain that the order of these terms are $O_{r}(n)$  except that we need some additional arguments for the term (which results from the constant term in the expression of $\t Y_{ik}^2$ in \cref{star})
\begin{align}\label{except}
\IE\big[\sum_{i<j}\sum_{\mathcal{K}\subset\mathcal{I}:\substack{v(\mathcal{K}\cup (i,j))= 4\\(i,j)\notin \mathcal{K}}}\sum_{k\in \mathcal{V}(\mathcal{K})\backslash \{i,j\}}n^{v_{\rm{iso}}-2}\prod_{s: s\in \mathcal{K}\backslash (i,k)}\t Y_{s}\mid \t V,\t E \big].
\end{align} 
If both $i$ and $k$ are isolated in $\mathcal{K}\backslash (i,k)$, then $\mathcal{K}\backslash (i,k)$ is an edge (recall $v(\mathcal{K}\cup (i,j))=4$) connecting to $j$ and \cref{except} is of order $O(n^2/n^2)\t E=O_{r}(n)$. If only one of the vertices $i$ and $k$ is isolated in $\mathcal{K}\backslash (i,k)$, then \cref{except} is of order $O(n/n^2)O_{r}(n^2)=O_{r}(n)$.

%Note that $v_{\rm{iso}}(K\backslash (i,k))\leq 2$.
%and $i$ must be a isolate vertex of $K\backslash (i,k)$ since $(i,j)\notin K$. If $v_{\rm{iso}}(K\backslash (i,k))=1$, then the vertices of the graph consisted of $\{s_l:s_l\in \mathcal{E}(K)\backslash (i,k)\}$ is either 2 or $3$. By \cref{lemmaB3,LI}, the order of \eqref{except} is $O_{r}(n)$. If $v_{\rm{iso}}(K\backslash (i,k))=2$, then the vertices of the graph consisted of $\{s_l:s_l\in \mathcal{E}(K)\backslash (i,k)\}$ is $2$ and the order of \eqref{except} is also $O_{r}(n)$. 

Together with the same estimate for $\IE\big[\sum_{i<j}\sum_{k\neq i,j} 
 \td{Y}_{jk}\partial_{ij}\bar{H}_B\mid \td{V},\td{E}\big]$, we obtain
\begin{align}\label{IJ123}
    \IE\big[\sum_{i<j}\sum_{k\neq i,j}(\td{Y}_{ik}+\td{Y}_{jk})\partial_{ij}\bar{H}_B\mid \td{V},\td{E}\big]=O_{r}(n).
\end{align}
Substituting  \eqref{edge}, \eqref{A2} and \eqref{IJ123} into \eqref{IJ12}, we obtain
\begin{align}\label{eq:I12J12}
    I_{12}+J_{12}=&\frac{pq}{N}\big(I+II+III+O_{r}(n)\big)\notag\\
    =&4\t q^2\sum_{l=2}^m\beta_ls_l\widetilde{p}^{e_l}+\frac{4\t q\t V}{N}\sum_{l=2}^m\beta_ls_l\widetilde{p}^{e_l-1}+O_{r}(n).
\end{align}
Turn to $I_{13}$. From \cref{LH,lemmaB3,LI}, \cref{ha}, \cref{eq:partialABC} and \cref{edge}, we have
\begin{align*}
        I_{13}=&\frac{O(1)}{N}\IE\Big[\sum_{i<j}\sum_{k\neq i,j}(\t q-\t Y_{ij})(\td{Y}_{ik}+\td{Y}_{jk})\big(\partial_{ij}\bar{H}_A+\partial_{ij}\bar{H}_B+\partial_{ij}\bar{H}_C+O(n^{-1})\big)^2\big\mid \td{V},\td{E}\Big]\\
        &+O(n^{-1})\frac{p(1-p)}{N}\IE\Big[\sum_{i<j}\sum_{k\neq i,j}(\t q-\t Y_{ij})(\td{Y}_{ik}+\td{Y}_{jk})\big\mid \td{V},\td{E}\Big]\\
        =&\frac{O(1)}{N}\IE\Big[\sum_{i<j}\sum_{k\neq i,j}(\t q-\t Y_{ij})(\td{Y}_{ik}+\td{Y}_{jk})(\partial_{ij}\bar{H}_A)^2\mid \td{V},\td{E}\Big]+O_{r}(n^{-1}).
\end{align*}
Using \cref{Ha}, expanding the products of summations, considering all the subgraph counts they form, and applying \cref{LH,lemmaB3,LI} and \cref{star}, we obtain
\begin{align}\label{R1P3'}
   &\IE\bigg[\sum_{i<j}\sum_{k\neq i,j}(\t q-\t Y_{ij})(\td{Y}_{ik}+\td{Y}_{jk})(\partial_{ij}\bar{H}_A)^2\mid \td{V},\td{E}\bigg]\notag\\
   =&\IE\bigg[\sum_{i<j}\sum_{k\neq i,j}(\t q-\t Y_{ij})(\td{Y}_{ik}+\td{Y}_{jk})\sum_{s\neq i,j}\Big(O(n^{-1})(\td{Y}_{is}+\td{Y}_{js})\notag\\
   &\qquad \qquad\qquad \qquad \qquad \qquad\qquad \qquad +O(n^{-1})\big(\td{Y}_{is}\td{Y}_{js}+O(n^{-1})\big)\Big)^2\mid \td{V},\td{E}\bigg]\notag\\
   =& \frac{1}{N}\IE\bigg[\sum_{i<j}(\t q-\t Y_{ij})\Big(O(1)\big(\sum_{k\neq i,j}(\td{Y}_{ik}+\td{Y}_{jk})\big)^{3\ \text{or}\ 2\ \text{or}\ 1}+O(1)\big(\sum_{k\neq i,j}(\td{Y}_{ik}+\td{Y}_{jk})\big)^{2\ \text{or}\  1}\notag\\
   &\qquad \qquad \qquad \qquad \times \big(\sum_{s\neq i,j}\td{Y}_{is}\td{Y}_{js}\big)+O(1)\sum_{k\neq i,j}(\td{Y}_{ik}+\td{Y}_{jk})\big(\sum_{s\neq i,j}\td{Y}_{is}\td{Y}_{js}\big)^2\Big)\mid \td{V},\td{E}\bigg]\notag\\
   =&\frac{1}{N}\E\big(\sum_{K:4\leq v(K)\leq 5} O(1)\t K+O(n) \t \triangle \mid \td{V},\td{E}\big)+O(n^{-1})\t V+O(n^{-1})\t E\notag+O(1)\\
   =&O_{r}(n).
\end{align}
Therefore, 
\begin{align*}
    I_{13}=O_{r}(n^{-1}).
\end{align*} 
Similarly, we have $J_{13}=O_{r}(n^{-1})$.
Moreover, from \cref{ha,LH1}, we have
\begin{align*}
I_{14}, J_{14}=O_{r}(N^{-1}\cdot N\cdot \sqrt{n}\cdot n^{-3/2})=O_{r}(n^{-1}).
%\|I_{14}\|_p+\|J_{14}\|_p\leq C_p \left(N^{-p}[N^p\cdot \sqrt{n}^{p}\cdot n^{-1.5p}]\right)^{1/p}=C_pn^{-1} .
\end{align*}
From \cref{m1p}, \cref{m1m}, \cref{eq:I12J12} and the above estimates of $I_{13},J_{13},I_{14},J_{14}$,
we can write
\begin{align}\label{A1}
    &M_{1,+}(\t V,\widetilde{E})+M_{1,-}(\t V,\widetilde{E})\notag\\
    =&-\frac{2}{N}\Big[(1-2\t q\sum_{l=2}^m\beta_ls_l\t p^{e_l-1})\t V+2\t q^2\sum_{l=2}^m\beta_ls_lN\widetilde{p}^{e_l}+O_{r}(n)\Big].
\end{align}
%From the above analysis, the ``$\textit{remains}$'' is $O_{r}(n)$. 
From the fact that $\IE(M_{1,+}(\t V,\t E)+M_{1,-}(\t V,\t E))=0$ (expectation of an anti-symmetric function of an exchangeable pair is 0), we obtain 
\begin{align}\label{IEV}
\IE\widetilde{V}=\frac{2N\t q^2\sum_{l=2}^m\beta_ls_l\t p^{e_l}}{1-2\t q\sum_{l=2}^m\beta_ls_l\t p^{e_l-1}}+O_{r}(n).
%\quad R_{1,+}(\t V,\t E)+R_{1,-}(\t V,\t E)=O_{r}(n).
\end{align}
Next, we calculate $M_{1,+}(\t V,\t E)$. From \cref{eq:M1plus} and discarding some small order terms as above, we obtain 
\begin{align}\label{R1P}
    M_{1,+}(\t V,\t E)=&-\frac{2\t p\td{V}}{N}+\frac{2(n-2)\t p\,\td{q}\td{E}}{N}\notag\\
    &+\frac{p(1-p)}{N}\IE\bigg[\sum_{i<j}(\t q-\t Y_{ij})\sum_{k\neq i,j}(\td{Y}_{ik}+\td{Y}_{jk})(\partial_{ij}\bar{H}_A+\partial_{ij}\bar{H}_B)\mid \td{V},\td{E}\bigg]\notag\\
    &+\frac{O(1)}{N}\IE\bigg[\sum_{i<j}(\t q-\t Y_{ij})\sum_{k\neq i,j}(\td{Y}_{ik}+\td{Y}_{jk})(\partial_{ij}\bar{H}_A)^2\mid \td{V},\td{E}\bigg]+O_{r}(n^{-1}).
    \end{align}
    From Lemma \ref{lemmaB3} and following the calculations below \cref{star}, we obtain
     \begin{align}\label{R1P1}
&\IE\Big[\sum_{i<j}(\t q-\t Y_{ij})\sum_{k\neq i,j}(\td{Y}_{ik}+\td{Y}_{jk})\partial_{ij}\bar{H}_A\big\mid \td{V},\td{E}\Big]\notag\\
=& \t q \E\Big[\sum_{i<j}\sum_{k\neq i,j}(\td{Y}_{ik}+\td{Y}_{jk})\partial_{ij}\bar{H}_A\big\mid \td{V},\td{E}\Big] \notag\\
&+\sum_{l=2}^m\IE\Big[\sum_{i<j}\sum_{k\neq i,j}\t Y_{ij}(\td{Y}_{ik}+\td{Y}_{jk})n^{-1}\Big(\sum_{s\neq i,j}\big(2\beta_ls_l\widetilde{p}^{e_l-2}(\widetilde{Y}_{is}+\widetilde{Y}_{js})\notag\\&\qquad\qquad\qquad\qquad\qquad\qquad\qquad\qquad+6\beta_lt_l\widetilde{p}^{e_l-3}(\widetilde{Y}_{is}\widetilde{Y}_{sj}-\textit{mean})\big)\Big)\Big|\td{V},\td{E}\Big]\notag\\
=& 4\t q\sum_{l=2}^m\beta_l s_l\t p^{e_l-2}\t V+4N\sum_{l=2}^m\beta_l s_l{\t p}^{e_l-1}\t q^2+O_{r}(n).
    \end{align}

Following the same calculation leading to \cref{IJ123}, we obtain
\begin{align}\label{R1P2}
    \IE\bigg[\sum_{i<j}(\t q-\t Y_{ij})\sum_{k\neq i,j}(\td{Y}_{ik}+\td{Y}_{jk})\partial_{ij}\bar{H}_B\mid \td{V},\td{E}\bigg]=O_{r}(n).
\end{align}
From \eqref{R1P3'}, we have
\begin{align}\label{R1P3}
\IE\bigg[\sum_{i<j}(\t q-\t Y_{ij})\sum_{k\neq i,j}(\td{Y}_{ik}+\td{Y}_{jk})(\partial_{ij}\bar{H}_A)^2\mid \td{V},\td{E}\bigg]=O_{r}(n).
\end{align}
Substituting \eqref{R1P1}, \eqref{R1P2} and \eqref{R1P3} into \eqref{R1P} yields
  \begin{align}\label{r1}
 M_{1,+}(\t V,\t E)= &-\frac{2}{N}(\t p-2\sum_{l=2}^m\beta_l s_l\t p^{e_l-1}\t q^2)(\t V-\IE \t V)+\frac{2n\t p\td{q}\td{E}}{N}\notag\\
 &+O(1)+O_{r}(n^{-1}) .
\end{align}

Similarly, we have
\begin{align}\label{r2}
    M_{1,-}(\t V,\t E)= &-\frac{2}{N}(\t q-2\sum_{l=2}^m\beta_l s_l\t p^{e_l}\t q)(\t V-\IE \t V)-\frac{2n\t p\td{q}\td{E}}{N}\notag\\
 &+O(1)+O_{r}(n^{-1}) .
\end{align}
Recalling \eqref{ua2} and substituting \eqref{r1} and \eqref{r2}, we deduce that 
\begin{align}\label{m11}
\lambda=&\frac{2(1-2\t q\sum_{l=2}^m\beta_ls_l\t p^{e_l-1})}{N},\notag\\
    a_+=\frac{\t p-2\t q^2\sum_{l=2}^m\beta_l s_l\t p^{e_l-1}}{1-2\t q\sum_{l=2}^m\beta_ls_l\t p^{e_l-1}},&\quad \quad \quad\quad a_{-}=\frac{\t q-2\t q\sum_{l=2}^m\beta_l s_l\t p^{e_l}}{1-2\t q\sum_{l=2}^m\beta_ls_l\t p^{e_l-1}}.
\end{align} 

Recall \cref{eq:WtoV}. We have obtained in this step that
 \begin{align*}
    & M_{1,+}(W,\t E)+ M_{1,+}(W,\t E)=-\lambda \big(W+O_{r}(n^{-1/2})\big),\\
    & M_{1,+}(W,\t E)=-\lambda \left(a_+W+O(n^{1/2})+O_{r}(n^{-1/2})-\frac{2n\t p\,\t q}{N\lambda}\cdot \frac{\t E}{\sigma_{V}}\right),\\
      &M_{1,-}(W,\t E)=-\lambda \left(a_-W+O(n^{1/2})+O_{r}(n^{-1/2})+\frac{2n\t p\,\t q}{N\lambda}\cdot \frac{\t E}{\sigma_{V}}\right).
 \end{align*}

\textbf{Step 3.} Finally, we consider  $M_{2,\pm}(\t V, \t E)$. We have 
\begin{align*}
    M_{2,+}(\widetilde{V},\widetilde{E})=&\frac{1}{N}\IE\Big[\sum_{i<j}(\widetilde{q}-\td{Y}_{ij})Y_{ij}^\prime\Big[\sum_{k\neq i,j}(\td{Y}_{ik}+\td{Y}_{jk})\Big]^2\big\mid\td{V},\td{E}\Big]\\
        =&\frac{1}{N}\IE\Big[\sum_{i<j}(\widetilde{q}-\td{Y}_{ij})\Big[\sum_{k\neq i,j}(\td{Y}_{ik}+\td{Y}_{jk})\Big]^2\left(\t p+O\left(|\partial_{ij} \bar{H}|+n^{-1}\right)\right)\big\mid\td{V},\td{E}\Big]\\
    =:&I_{21}+I_{22},
    \end{align*}
where 
\be{
I_{21}=\frac{\t p}{N}\IE\Big[\sum_{i<j}(\t q-\t Y_{ij})\big(\sum_{k\neq i,j}(\t Y_{ik}+\t Y_{jk})\big)^2\mid \t V,\t E\Big].
}
We also have
\begin{align*}
  M_{2,-}(\widetilde{V},\widetilde{E})=&\frac{1}{N}\IE\Big[\sum_{i<j}(\widetilde{p}+\td{Y}_{ij})(1-Y_{ij}^\prime)\big[\sum_{k\neq i,j}(\td{Y}_{ik}+\td{Y}_{jk})\big]^2\big\mid\td{V},\td{E}\Big]\\
    =&\frac{1}{N}\IE\Big[\sum_{i<j}(\widetilde{p}+\td{Y}_{ij})\big[\sum_{k\neq i,j}(\td{Y}_{ik}+\td{Y}_{jk})\big]^2\left(\t q+O\left(|\partial_{ij} \bar{H}|+n^{-1}\right)\right)\big\mid\td{V},\td{E}\Big]\\
    =:&J_{21}+J_{22},
    \end{align*}   
where 
\be{
J_{21}=\frac{\t q}{N}\IE\Big[\sum_{i<j}(\t p+\t Y_{ij})\big(\sum_{k\neq i,j}(\t Y_{ik}+\t Y_{jk})\big)^2\mid \t V,\t E\Big].
}
For $I_{22}$ or $J_{22}$, due to Lemma \ref{LH} and \cref{ha}, 
\begin{align*}
    I_{22}, J_{22}= O_{r}(N^{-1}\cdot N\cdot n\cdot n^{-1/2})=O_{r} (n^{1/2}).
\end{align*}
From \cref{LH,lemmaB3,LI} and \cref{star}, we have
\begin{align*}
   I_{21}=&\frac{\t p}{N}\IE\Big[\sum_{i<j}(\t q-\t Y_{ij})\big(\sum_{k\neq i,j}(\t Y_{ik}+\t Y_{jk})\big)^2\mid \t V,\t E\Big]\\
    =&\frac{\t p}{N}\IE\Big[\sum_{i<j}(\t q-\t Y_{ij})\Big[\sum_{\substack{k\neq s:\\ k\neq i,j\\s\neq i,j}}(\t Y_{ik}\t Y_{is}+\t Y_{ik}\t Y_{js}+\t Y_{jk}\t Y_{is}+\t Y_{jk}\t Y_{js})\\
&\quad\quad+\sum_{k\neq i,j}((1-2\t p)(\t Y_{ik}+\t Y_{jk})+2\t p\,\t q+2\t Y_{ik}\t Y_{jk})\Big]\big\mid \t V,\t E\Big]\\
   =&\frac{2\widetilde{p}\t q}{N}\IE\left[n^3\t p\,\t q/2+O(n^2)+n \widetilde{V}+O(1) \t \triangle+O(1)\t \sqcap+O(1)\t\tstar + O(n)\t E \mid \t V,\t E\right]
   \\
  =&\frac{2\t p\,\t q}{N}\IE\left[n^3\t p\,\t q/2+n\IE\widetilde{V}+O_{r}(n^{5/2})\right].
  \end{align*}
%where   (change) $\widetilde{\tstar}=\sum_{i<j<k<l}\widetilde{Y}_{ij}\widetilde{Y}_{ik}\widetilde{Y}_{il}.$  
Similarly, we also have 
\begin{align*}
    J_{21}=&\frac{2\t p\,\t q}{N}\IE\left[n^3\t p\,\t q/2+n\IE\widetilde{V}+O_{r}(n^{5/2})\right].
\end{align*}
%From the expression of $\sigma_V^2$ below \cref{eq:thm1}, 
We find that assumption \cref{ua3} is satisfied if we set
\begin{align}\label{m2}
\sigma^2_{V}=\frac{Nn\t p^2\,\t q^2+n\t p\,\t q\IE\widetilde{V}}{1-2\t q\sum_{l=2}^m\beta_ls_l\t p^{e_l-1}}=\frac{Nn\t p^2\,\t q^2}{(1-2\t q\sum_{l=2}^m\beta_ls_l\t p^{e_l-1})^2},
\end{align}
which is the same as \cref{eq:sigv}.

Recall \cref{eq:WtoV}. We have obtained in this step that
 \begin{align*}
     & M_{2,\pm}(W,\t E)=\lambda(1+O_{r}(n^{-1/2})).
 \end{align*}

Combining the three steps, we have finished the proof of \cref{P2}.
 \end{proof}
 
%Now we turn to the proof of Theorem \ref{thm:twostar}, which apply Propositions \ref{P1} and \ref{P2} with $k=0$.

\section{Proof of \cref{L2,lem:lclt}}\label{sec:lem}

\subsection{Proof of \cref{L2}}\label{sec:lem1}

We prove \cref{L2} by adapting the approach of \cite{reinert2019}. However, one difference is that to obtain faster convergence rate for $|p-\t p|$, we apply Stein's method using the Stein operator for $G(n,p)$, while they used the Stein operator for the ERGM.

\begin{proof}[Proof of \cref{L2}]
We identify a simple graph on $n$ labeled vertices $\{1,\dots, n\}$ with an element $x=(x_{ij})_{1\leq i<j\leq n}\in \{0,1\}^{\mathcal{I}}$, where ${\mathcal{I}}:={\mathcal{I}_n}:=\{(i,j):1\leq i<j\leq n\}$ and $x_{ij}=1$ if and only if there is an edge between vertices $i$ and $j$. In this way, the ERGM \cref{eq:ERGM} induces a random element $Y\in \{0,1\}^{\mathcal{I}}$. Similarly, $G(n,p)$ induces a random element $X$ in $\{0,1\}^{\mathcal{I}}$.

   For any $x\in \{0,1\}^{\mathcal{I}}$, define $h(x)=\sum_{s\in \mathcal{I}} x_s$.
   It suffices to prove that 
   \begin{align}\label{eq:diffn}
       |\IE h(Y)-\IE h(X)|\leq Cn.  
    \end{align}
    %for $Y\sim ERGM(\beta)$ and $X\sim G(n,p)$. 
    Define the continuous-time Glauber dynamics $\{Y_t, t\geq 0\}$ for $Y$ similar to the discrete-time case above \cref{tran}, except with
exponential rate $1$ holding times.
As in \cite{reinert2019}, the continuous-time process has the generator $\mathcal{A}_Y$ given by
\begin{align*}
    \mathcal{A}_Y f(x)=\frac{1}{N}\sum_{s\in \mathcal{I}}\left[q_Y\big(x_{s+}|x\big)\Delta_s f(x)+\left(f(x_{s-})-f(x)\right)\right],
\end{align*}
where  $x_{s+}$ has a $1$ in the $s$th coordinate and is otherwise the same as $x$,
$x_{s-}$ has a $0$ in the $s$th coordinate and is otherwise the same as $x$, 
\be{
\Delta_s f(x):=f(x_{s+})-f(x_{s-})
}
and
\begin{align}\label{eq:qX}
    q_{Y }(x_{s+}|x) :=\P(Y_s = 1|(Y_u)_{u\neq s} = (x_u)_{u\neq s}).
\end{align}
We define $\{X_t, t\geq 0\}$, $\mathcal{A}_X$ and $q_X$ similarly for $G(n,p)$.

Now we apply the generator approach of Stein's method to prove \cref{eq:diffn}. We set up the Stein equation
\begin{equation}
    \label{30001}
  \mathcal{A}_Xf_h(x)=h(x)-\IE h(X).
\end{equation}
It is known that it has the solution
\begin{align*}
    f_h(x):=-\int_{0}^{\infty} \IE[h(X_t)-h(X)\mid X_0=x]\d t.
\end{align*} 
 We also know that the generator satisfies $\IE \mathcal{A}_Yf_h(Y)=0$.  Hence,  by \cref{30001,eq:qX}, we obtain
\begin{align}\label{SO}
    &\IE h(Y)-\IE h(X)\notag\\
    =&\IE\mathcal{A}_Xf_h(Y)\notag\\
   =&\IE\mathcal{A}_Xf_h(Y)-\IE\mathcal{A}_Yf_h(Y)\notag\\
   =&\frac{1}{N}\sum_{s\in \mathcal{I}}\IE\left[\big(q_Y(Y^{(s,1)}\mid Y)-q_X(Y^{(s,1)}\mid Y)\big)\Delta_s f_h(Y)\right].
\end{align}

We next prove that $\Delta_s f_h(x)=N.$ 
\begin{lemma}\label{L3}
For any $x\in \{0,1\}^{\mathcal{I}}$, we have $ \Delta_s f_h(x)=N.$
\end{lemma}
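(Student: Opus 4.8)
The plan is to solve the Stein equation \cref{30001} for $f_h$ in closed form and then read off the single-edge difference. The key structural fact is that under $G(n,p)$ the coordinates $(X_s)_{s\in\mathcal{I}}$ are independent and the dynamics $\mathcal{A}_X$ resamples each edge $s$ to an independent $\Be(p)$ at the epochs of its own rate-$1/N$ clock. Since $h(x)=\sum_{s\in\mathcal{I}}x_s$ is linear, the semigroup acts coordinatewise, so $f_h$ is an affine function of $h$ and $\Delta_s f_h$ will turn out to be a universal constant, independent of both $x$ and $s$.

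Carrying this out, I would first record the coordinatewise relaxation $\IE[X_{t,s}\mid X_0=x]=e^{-t/N}x_s+(1-e^{-t/N})p$, since edge $s$ is untouched up to time $t$ with probability $e^{-t/N}$ and is an independent $\Be(p)$ otherwise. Summing over the $N$ edges gives $\IE[h(X_t)\mid X_0=x]=e^{-t/N}h(x)+(1-e^{-t/N})Np$, hence $\IE[h(X_t)-h(X)\mid X_0=x]=e^{-t/N}(h(x)-Np)$. The decisive point is that flipping coordinate $s$ changes this only through $h(x_{s+})-h(x_{s-})=1$: for every $x$ and every $s$ one has $\IE[h(X_t)\mid X_0=x_{s+}]-\IE[h(X_t)\mid X_0=x_{s-}]=e^{-t/N}$. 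Differencing the closed form for $f_h$ across coordinate $s$ and integrating then yields $\Delta_s f_h(x)=\int_0^\infty e^{-t/N}\,\d t=N$, uniformly in $x\in\{0,1\}^{\mathcal{I}}$ and in $s$. As a consistency check pinning the sign, substituting $\Delta_s f_h\equiv N$ into \cref{SO} collapses the right-hand side to $\frac{1}{N}\sum_{s}\IE[(q_Y-q_X)]\cdot N=\sum_s\IE[q_Y(Y^{(s,1)}\mid Y)-p]$, which is precisely $\IE h(Y)-\IE h(X)$; so $N$ is exactly the value that makes the downstream identity close.

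An equivalent and slightly shorter route avoids the time integral entirely: posit the ansatz $f_h=c\,(h-Np)$, insert it into \cref{30001}, and use that $\mathcal{A}_X$ annihilates constants together with the one-line evaluation of $\mathcal{A}_X h$ (via $\Delta_s h=1$, $h(x_{s-})-h(x)=-x_s$, and $q_X(x_{s+}\mid x)=p$) to solve for the single scalar $c$; then $\Delta_s f_h=c\,\Delta_s h=c=N$. I expect the genuine work to be justification rather than algebra: verifying the explicit semigroup formula for $\IE[h(X_t)\mid X_0=x]$ from the product structure of $G(n,p)$, and checking that the defining time integral for $f_h$ converges so that differencing and integration may be interchanged. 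Both are routine because $h$ is bounded and linear and the relaxation is exactly exponential, so the integrand decays like $e^{-t/N}$. The only real obstacle is bookkeeping: the rate $1/N$ of the per-edge clock and the normalization $1/N$ in $\mathcal{A}_X$ must be tracked jointly, since it is their interplay through $\int_0^\infty e^{-t/N}\,\d t=N$ that produces the constant $N$ rather than some other scaling.
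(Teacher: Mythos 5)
Your proposal is correct and takes a genuinely different route from the paper. The paper proves the lemma by coupling: it runs two copies of the continuous-time chain from $x_{s+}$ and $x_{s-}$ with identical jump times, identical chosen edges, and identical update values (possible precisely because under $G(n,p)$ the resampling law of an edge does not depend on the rest of the configuration), observes that $h(U^{[x,s]}(m))-h(V^{[x,s]}(m))$ equals $1$ until edge $s$ is first selected and $0$ thereafter, so its expectation is $(1-N^{-1})^m$, and sums the geometric series against the unit-mean holding times to get $N$. You instead diagonalize the dynamics: each edge refreshes independently at rate $1/N$ to an independent Bernoulli($p$), so $h-Np$ is an eigenfunction of $\mathcal{A}_X$ with eigenvalue $-1/N$, the semigroup acts on $h$ by pure exponential decay, and the defining integral for $f_h$ can be evaluated in closed form, making $f_h$ affine in $h$ and $\Delta_s f_h$ a constant by inspection. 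What each buys: your route gives strictly more — the explicit solution $f_h=\pm N(h-Np)$, from which the lemma, its uniformity in $x$ and $s$, and the convergence of the integral are all immediate, and your ansatz variant avoids the integral representation altogether; the paper's coupling uses less structure (only state-independence of the update law, not an explicit spectral computation) and is the style of argument that survives in settings where the semigroup cannot be computed, though here both are equally elementary.

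One point of caution, which is shared with the paper rather than a gap in your argument: with the paper's definitions taken literally — $f_h(x)=-\int_0^\infty \IE[h(X_t)-h(X)\mid X_0=x]\d t$, $\Delta_s f=f(x_{s+})-f(x_{s-})$, and the stated generator — the value is actually $\Delta_s f_h=-N$. Indeed $\mathcal{A}_X h=(Np-h)/N$, so inserting the ansatz $f_h=c(h-Np)$ into \cref{30001} forces $c=-N$; your ansatz paragraph (and your first route) silently drop the leading minus sign in the definition of $f_h$, which is exactly what the paper's own proof does. The discrepancy is compensated downstream: in \cref{SO} the paper writes $q_Y-q_X$ where the algebra gives $q_X-q_Y$, and the two sign flips cancel, so \cref{eq:h-h} and everything that follows is correct. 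Your "consistency check" against \cref{SO} therefore certifies consistency with the paper's flipped conventions rather than with the literal definitions; it would be worth flagging this in a final write-up, but it does not affect the substance of either proof.
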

\begin{proof}[Proof of \cref{L3}]
Let $0=T_0 < T_1 < T_2 < \cdots$ be the jump times of the continuous-time Glauber dynamics Markov chain. The jump times are independent of the discrete skeleton.
Because $X$ is $G(n,p)$, we can couple the Markov chains starting from two different initial values $x_{s+}$ and $x_{s-}$ in such a way that their jump times are equal, they always choose the same edge to update and update that edge to the same value.

From the definition of $f_h(x)$, we have 
    \begin{align*}
        \Delta_s f_h(x)=&\int_0^{\infty}\IE\big[h(X_t)\mid X_0=x_{s+}\big]-{E}\big[h(X_t)\mid X_0=x_{s-}\big]\d t\\
        =&\sum_{m=0}^{\infty}\IE\left[\int_{T_m}^{T_{m+1}}\big[h(U^{[x,s]}(m))-h(V^{[x,s]}(m))\big]\d t\right]\\
        =&\sum_{m=0}^{\infty}\IE\big[h(U^{[x,s]}(m))-h(V^{[x,s]}(m))\big],   \end{align*}
    where $\mathcal{L}(U^{[x,s]}(m))=\mathcal{L}(X_t\mid X_0=x_{s+})$ for $T_m\leq t<T_{m+1}$, and $\mathcal{L}(V^{[x,s]}(m))=\mathcal{L}(X_m\mid X_0=x_{s-})$ for $T_m\leq t<T_{m+1}$.
    Note that $h(U^{[x,s]}(0))-h(V^{[x,s]}(0))=1$ because the initial values differ by one edge.
    Moreover, $\E[h(U^{[x,s]}(1))-h(V^{[x,s]}(1))]=1-N^{-1}$ because $h(U^{[x,s]}(1))-h(V^{[x,s]}(1))=0\  \text{or}\ 1$ and the probability of 0 is $N^{-1}$ (the probability that the edge $s$ is chosen to be updated).
    % , we consider the result when $m=1$, which is 
    % $1-N^{-1}.$ 
    %Using grand coupling, we can keep $V^{[x,s]}(m)\subseteq U^{[x,s]}(m)$ for every $m$.  
    By induction,
    \begin{align*}
        &\IE\big[h(U^{[x,s]}(m))-h(V^{[x,s]}(m))\big]=(1-N^{-1})^{m},
        %\\
        %=&\IE\left[\IE\big[h(U^{[x,s]}(m))-h(V^{[x,s]}(m))\mid U^{[x,s]}(m-1),V^{[x,s]}(m-1)\big]\right]\\
        %=&(1-N^{-1})\IE\big[ U^{[x,s]}(m-1)-V^{[x,s]}(m-1)\big]
        %\\&\quad\cdots\\
        %=&(1-N^{-1})^{m},
    \end{align*}
    which implies 
    \begin{align*}
        \Delta_s f_h(x)=\sum_{m=0}^{\infty}\IE\big[h(U^{[x,s]}(m))-h(V^{[x,s]}(m))\big]
        =\sum_{m=0}^{\infty}(1-N^{-1})^{m}=N.
    \end{align*}
    \end{proof}
    Returning to \eqref{SO},  we have, from \cref{L3},
    \begin{align}\label{eq:h-h}
    \IE h(Y)-\IE h(X)=&\sum_{s\in \mathcal{I}}\IE\left[\big(q_Y(Y^{(s,1)}\mid Y)-q_X(Y^{(s,1)}\mid Y)\big)\right] \nonumber \\
    =&\sum_{s\in \mathcal{I}}\IE\left[\frac{e^{\partial_s H(Y)}}{1+e^{\partial_s H(Y)}}-\frac{e^{2\Phi_{\beta}(p)}}{1+e^{2\Phi_{\beta}(p)}}\right],
\end{align}
where $\partial_s H(Y)$ was defined in \cref{r}.
Let $\phi(x)=e^x/(1+e^x)^2$. From the Taylor expansion and fact that $\phi(x)$ is 1-Lipschitz, we have
\begin{align}\label{E0}
     &\bigg|\IE h(Y)-\IE h(X)-\sum_{s\in \mathcal{I}}\IE\left[\big(\partial_s H(Y)-2\Phi_{\beta}(p)\big)\phi(2\Phi_{\beta}(p))\right]\bigg|\notag\\
    \leq& \sum_{s\in \mathcal{I}} \E (\partial_s H(x)-2\Phi_{\beta}(p))^2\leq C n,
\end{align}
where we used \cite[Eq.(5.29)]{fang2024}, \cref{eq:GN84} and \cref{eq:slowrate} in the last inequality.
Moreover, 
  \begin{align}\label{E1}
      &\sum_{s\in \mathcal{I}}\IE\left[\big(\partial_s H(Y)-2\Phi_{\beta}(p)\big)\phi(2\Phi_{\beta}(p))\right]\notag\\
      =&p(1-p) \sum_{s\in \mathcal{I}}\IE\big(\partial_s H(Y)-2\Phi_{\beta}(p)\big)\notag\\
      =&p(1-p)\sum_{s\in\mathcal{I}}\sum_{l=1}^m \beta_j\IE\Big[n^{2-v_l}\operatorname{Hom}(H_l, Y, s)-2e_lp^{e^l-1}\Big]\notag\\
      = &p(1-p)\sum_{s\in\mathcal{I}}\sum_{l=1}^m \beta_l\IE\Big[n^{2-v_l}\operatorname{Hom}(H_l, Y, s)-2e_l\widetilde{p}^{e^l-1}\Big]\\\label{E2}
      &+p(1-p)\sum_{s\in\mathcal{I}}\sum_{l=1}^m \beta_l[2e_l\widetilde{p}^{e^l-1}-2e_lp^{e^l-1}]
  \end{align}
 Recall \eqref{meandiff}:
    \begin{align*}
    |\IE\operatorname{Hom}(H_l, Y, s)-2n^{v_l-2}e_l\widetilde{p}^{e^l-1}|
        \leq Cn^{v_l-2}n^{-1},
    \end{align*}
    which implies that the order of \eqref{E1} is $O(n)$. For $\eqref{E2}$, we have
 \begin{align*}
     \widetilde{p}^{e_l-1}-p^{e_l-1}=( \widetilde{p}^{e_l-2}+\widetilde{p}^{e_l-3}p+\cdots+p^{e_l-2})(\widetilde{p}-p).
 \end{align*}
    For any two positive integers $a$, $b$ satisfying $a+b=e_{l}-2$, we have $$|\widetilde{p}^ap^b-p^{e_l-2}|\leq C|\widetilde{p}-p|.$$ 
    From \cref{eq:slowrate}, we have
    \begin{align}\label{E3}
         \Big|\widetilde{p}^{e_l-1}-p^{e_l-1}-(e_l-1) p^{e_l-2}(\widetilde{p}-p)\Big|\leq C|\widetilde{p}-p|^2\leq Cn^{-1}.
    \end{align}
    Hence we find that \eqref{E2} is equal to $2Np(1-p)(\widetilde{p}-p)\sum_{l=1}^m\beta_le_l(e_l-1)p^{e_l-2}+O(n).$
Since $\IE h(Y)-\IE h(X)= N(\widetilde{p}-p)$, from \eqref{E0}-\eqref{E3}, we find that
\begin{align*}
    \bigg|N(\widetilde{p}-p)-2Np(1-p)\Big(\sum_{l=1}^m\beta_le_l(e_l-1)p^{e_l-2}\Big)(\widetilde{p}-p)\bigg|\leq Cn.
\end{align*}
In the subcritical region, we have $\varphi^{\prime}_{\beta}(p)=2p(1-p)\Phi^{\prime}_{\beta}(p)=2p(1-p)\sum_{l=1}^me_l(e_l-1)p^{e_l-2}< 1$. Therefore, we conclude that $|\widetilde{p}-p|\leq Cn^{-1}$.
\end{proof}

\subsection{Proof of \cref{lem:lclt}}

For an integer-valued random variable $X$, define
\begin{align}\label{locdef}
    D(X)=\sum_{i=-\infty}^{\infty}|\mathbb{P}(X=i+2)-2\mathbb{P}(X=i+1)+\mathbb{P}(X=i)|.
\end{align}
Applying \cite[Theorem 2.2]{rollin2015} with $d_1=d_{\rm{loc}}, d_2=d_{\rm{K}}, m=1, l=2, \beta=1/2$, the local distance between two integer-valued random variables $U$ and $V$ can be bounded as
\begin{align}\label{loc}
    d_{\rm{loc}}(U,V)\leq c d_{\rm{K}}(U,V)^{1/2}[D(U)+D(V)]^{1/2},
\end{align} 
where $c$ denotes a universal constant.
From the definition of $Z_{\mu_n, \sigma_n^2}^{(d)}$ in \cref{Zd} and the fact that $\sigma_n^2\asymp n^2$, we have $D(Z_{\mu_n, \sigma_n^2}^{(d)})\leq c n^{-2}.$ Combining \eqref{Kolo} and \eqref{loc}, it now suffices to prove 
\ben{\label{eq:DEn}
D(E_n)\leq Cn^{-2}.
}
We need the following result in \cite{rollin2015}.
\begin{lemma} (\cite[Theorem~3.7]{rollin2015})\label{l1}
    Let $(X, X^{\prime}, X^{\prime\prime})$ be three consecutive steps of a reversible Markov chain in equilibrium. Let $M: = M(X)$, $M^{\prime} := M(X^{\prime})$, and $M^{\prime\prime} := M(X^{\prime\prime})$ take
values on the integers $\mathbb{Z}$. Define
\begin{align*}
    &Q_{1}(x)=\mathbb{P}\left(M^{\prime}=M+1 \left.\right| X=x\right),\quad q_{1}=\IE Q_{1}(X)=\mathbb{P}(M^{\prime}=M+1),\\
    &Q_{1, 1}(x)=\mathbb{P}\left(M^{\prime}=M+1, M^{\prime \prime}=M^{\prime}+1\left.\right| X=x\right).
\end{align*}
Define $Q_{-1}(x)$ and $Q_{-1,-1}(x)$ similarly by changing all the $+1$ to $-1$.
Then, we have 
    \begin{align*}
       D(X) \leq &\frac{1}{q_{1}^{2}} \Big[2 \operatorname{Var} Q_{1}(X)+\IE\left|Q_{1, 1}(X)-Q_{1}(X)^{2}\right|\\
        &+2 \operatorname{Var} Q_{-1}(X)+\IE\left|Q_{-1,-1}(X)-Q_{-1}(X)^{2}\right|\Big].
    \end{align*}
\end{lemma}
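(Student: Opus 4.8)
Since $D(X)$ is the $\ell^1$ norm of the second difference of the equilibrium law of the integer functional $M$, I set $p_i:=\mathbb P(M=i)$ and aim to control $p_{i+2}-2p_{i+1}+p_i$ pointwise, then sum over $i$; here $D(X)=\sum_i|p_{i+2}-2p_{i+1}+p_i|$ means $D$ applied to $M=M(X)$, as in its use with $M=E_n$ downstream. Write $\bar Q_{1}(i)=\IE[Q_1(X)\mid M=i]$, $\bar Q_{-1}(i)=\IE[Q_{-1}(X)\mid M=i]$, and similarly $\bar Q_{1,1},\bar Q_{-1,-1}$. Assuming, as for the edge Glauber dynamics, that one step changes $M$ by at most $\pm1$, stationarity applied to the test functions $h=\mathbbm{1}_{\{\cdot\ge j\}}$ yields the first–order flux balance $p_{i}\bar Q_1(i)=p_{i+1}\bar Q_{-1}(i+1)$, where reversibility is what identifies the up–flux $\mathbb P(M=i,M'=i+1)$ with the down–flux $\mathbb P(M=i+1,M'=i)$. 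From this I would write $p_{i+1}-p_i=p_i\,(\bar Q_1(i)-\bar Q_{-1}(i+1))/\bar Q_{-1}(i+1)$, which already shows that the \emph{first}–difference norm is of order $q_1^{-1}\,\IE|\bar Q_1(M)-q_1|=O(q_1^{-1}\sqrt{\operatorname{Var}Q_1})$.

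The essential point is that the \emph{second} difference must come out proportional to $\operatorname{Var}Q_1$ itself, not its square root, so one full order of extra cancellation is needed. To generate it I would pass to the three–step chain $(X,X',X'')$ and use its reversibility $(M,M',M'')\eqlaw(M'',M',M)$ to obtain the second–order flux balance $p_i\bar Q_{1,1}(i)=p_{i+2}\bar Q_{-1,-1}(i+2)$, which matches the two–step up–flux out of level $i$ with the two–step down–flux into level $i$. Iterating the first–order balance gives $p_{i+2}=p_i\,\bar Q_1(i)\bar Q_1(i+1)/(\bar Q_{-1}(i+1)\bar Q_{-1}(i+2))$, so there are two expressions for $p_{i+2}/p_i$ whose discrepancy is exactly the failure of $\bar Q_{1,1}$ to factor into one–step rates. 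Writing $Q_{1,1}-Q_1^2$ and $Q_{-1,-1}-Q_{-1}^2$ for these factorization defects and expanding $\bar Q_{\pm1}(i)=q_1+(\bar Q_{\pm1}(i)-q_1)$, the second difference $p_{i+2}-2p_{i+1}+p_i$ splits into a telescoping part driven by the increments of $\bar Q_{\pm1}$ along the chain—which, summed against $p_i$, collapses to $\operatorname{Var}\bar Q_1(M)\le\operatorname{Var}Q_1$ and $\operatorname{Var}\bar Q_{-1}(M)\le\operatorname{Var}Q_{-1}$—plus a part coming from the defects, bounded by $\IE|Q_{1,1}-Q_1^2|$ and $\IE|Q_{-1,-1}-Q_{-1}^2|$. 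The prefactor $q_1^{-2}$ is the conversion from a two–step flux (a product of two one–step rates, each of size $\asymp q_1$) back to $p_i$.

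The main obstacle is the combination step: the exact bookkeeping that shows the first–order contribution cancels upon taking the second difference and summing over $i$, leaving only the four stated quadratic quantities with no surviving three–step (or higher) fluxes. Concretely one must verify that cross terms of the form $p_i(\bar Q_1(i)-q_1)(\bar Q_1(i+1)-q_1)$ reassemble into genuine variances rather than merely $L^1$ deviations—this is where the upgrade from $\sqrt{\operatorname{Var}Q_1}$ to $\operatorname{Var}Q_1$ is won—while the increments of $\bar Q_{\pm1}$ between consecutive steps are precisely what the factorization defect $Q_{\pm1,\pm1}-Q_{\pm1}^2$ controls after equilibrium reweighting. I would organize the accounting by summation by parts against these increments, pairing each summed increment with its defect; the remaining technical points are keeping the denominators $\bar Q_{-1}(\cdot)$ bounded below by a multiple of $q_1$ and controlling the tail and boundary terms of the sum.
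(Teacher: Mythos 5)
This lemma is not actually proved in the paper: it is imported verbatim from R\"ollin and Ross (2015), Theorem~3.7, so your attempt is being measured against that proof. Your ingredients are the right ones (one-step and two-step flux balance from reversibility; variances plus the factorization defects $Q_{\pm1,\pm1}-Q_{\pm1}^2$ as target error terms), and your diagnosis that the naive argument only yields $q_1^{-1}\sqrt{\operatorname{Var}Q_1}$ is correct. But the decisive step --- the one that wins the extra order of cancellation --- is never executed: you write that ``the main obstacle is the combination step'' and then describe what ``one must verify'' instead of verifying it. Worse, the route you sketch has concrete obstructions. First, it is multiplicative: $p_{i+1}-p_i=p_i(\bar Q_1(i)-\bar Q_{-1}(i+1))/\bar Q_{-1}(i+1)$ and $p_{i+2}=p_i\bar Q_1(i)\bar Q_1(i+1)/(\bar Q_{-1}(i+1)\bar Q_{-1}(i+2))$ require the conditional rates $\bar Q_{-1}(\cdot)$ to be bounded below by a multiple of $q_1$; no such hypothesis appears in the lemma and it is false in general, so this cannot be dismissed as a ``remaining technical point.'' Second, your cross terms $p_i(\bar Q_1(i)-q_1)(\bar Q_1(i+1)-q_1)$ sit at \emph{adjacent} levels: Cauchy--Schwarz turns them into $\sum_i p_i(\bar Q_1(i+1)-q_1)^2$, which equals a variance only after replacing the weight $p_i$ by $p_{i+1}$, i.e.\ only if one already controls the smoothness of $(p_i)$ --- exactly what is being proved, so the sketch is circular at the crucial point. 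Third, you give no mechanism by which $\IE|Q_{1,1}-Q_1^2|$, with the absolute value \emph{outside} the conditional expectation, can appear; a summation-by-parts on increments would naturally produce $\IE\big[|Q_1(X')-Q_1(X)|\mathbbm{1}_{M'=M+1}\big]$, a genuinely different and larger quantity.

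The actual argument is additive and needs no division. Set $a_i=\IE[(Q_1-q_1)\mathbbm{1}_{M=i}]$ and $b_i=\IE[(Q_{-1}-q_1)\mathbbm{1}_{M=i}]$. One-step reversibility, $\IE[Q_1\mathbbm{1}_{M=i}]=\IE[Q_{-1}\mathbbm{1}_{M=i+1}]$, gives the exact identity $q_1(p_i-p_{i+1})=b_{i+1}-a_i$, hence $q_1(p_i-2p_{i+1}+p_{i+2})=(a_{i+1}-a_i)+(b_{i+1}-b_{i+2})$ with no error term at all; the entire content of the lemma is the bound $\sum_i|a_{i+1}-a_i|\leq q_1^{-1}\big[\IE|Q_{1,1}-Q_1^2|+\operatorname{Var}Q_1+\IE|(Q_1-q_1)(Q_{-1}-q_1)|\big]$ and its mirror for $b$. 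This is obtained by the swap device (the same one as in \cref{le:d}): writing $q_1\IE[U\mathbbm{1}_{M=i}]=\IE[U\mathbbm{1}_{M=i}\mathbbm{1}_{M'=i+1}]-\IE[U(Q_1-q_1)\mathbbm{1}_{M=i}]$ with $U=Q_1-q_1$, doing the analogous identity at level $i+1$ with down-steps, using exchangeability of $(X,X')$ to equate $\IE[U(X)\mathbbm{1}_{M=i+1}\mathbbm{1}_{M'=i}]=\IE[U(X')\mathbbm{1}_{M=i}\mathbbm{1}_{M'=i+1}]$, and then --- the step your sketch is missing --- invoking the Markov property to compute $\IE[Q_1(X')\mathbbm{1}_{M'=M+1}\mid X]=Q_{1,1}(X)$, so that $\IE[(Q_1(X)-Q_1(X'))\mathbbm{1}_{M=i}\mathbbm{1}_{M'=i+1}]=\IE[(Q_1^2-Q_{1,1})\mathbbm{1}_{M=i}]$. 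Summing over $i$, noting $\IE Q_1=\IE Q_{-1}=q_1$ (exchangeability of $(M,M')$), and using $2\IE|(Q_1-q_1)(Q_{-1}-q_1)|\leq\operatorname{Var}Q_1+\operatorname{Var}Q_{-1}$ produces exactly the stated bound with the constants $2,1,2,1$. Until you supply this identity (or an equivalent one), the proposal is a plan rather than a proof.
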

Let $G$ be a random graph following the ERGM \cref{eq:ERGM}. We identify $G$ with its edge indicators $\{Y_{ij}: 1\leq i<j\leq n\}$. 
 Let $G,G^{\prime},G^{\prime\prime}$ be three consecutive steps of the discrete-time Glauber dynamics defined above \cref{tran}, where in each step, one random edge indicator is chosen to be resampled according to the conditional distribution given all the other edge indicators. 
 %Let $E_0=E(G)$, $E_1=E(G^{\prime})$ and $E_2=E(G^{\prime\prime})$. We next calculate $D(E_n)$. 
We will apply \cref{l1} with $(X,X',X'')=(G,G',G'')$ and $M(G)=\text{number of edges in}\ G$
to obtain the following result.

\begin{lemma}\label{l2} In the subset $B$ of the subcritical region considered in \cref{lem:lclt}, we have
\begin{align*}
    q_1&=p(1-p)+O(n^{-1}),\\
    \operatorname{Var}Q_1(G)&=O(n^{-2}),\quad \operatorname{Var}Q_{-1}(G)=O(n^{-2}),\\
   \IE\left|Q_{1, 1}(G)-Q_{1}(G)^{2}\right|&=O(n^{-2}),\quad \IE\left|Q_{-1, -1}(G)-Q_{-1}(G)^{2}\right|=O(n^{-2}).
\end{align*}
\end{lemma}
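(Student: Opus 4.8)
The plan is to read off both $Q_1$ and $Q_{-1}$ from the one-step transition probabilities \cref{tran}. Writing $M=M(G)$ for the edge count, an update increases $M$ by one precisely when a currently absent edge is selected and resampled to present, so with $e=(i,j)$,
\[
Q_1(G)=\frac1N\sum_{e}(1-Y_e)\frac{e^{\partial_e H(Y)}}{1+e^{\partial_e H(Y)}},\qquad Q_{-1}(G)=\frac1N\sum_{e}Y_e\frac{1}{1+e^{\partial_e H(Y)}},
\]
and $q_1=\IE Q_1(G)$. To obtain $q_1=p(1-p)+O(n^{-1})$ I would Taylor-expand $x\mapsto e^x/(1+e^x)$ around $x=2\Phi_\beta(p)$, using $\partial_{ij}\bar H=\partial_e H-2\Phi_\beta(p)$ from \cref{eq:pijbH} and the identity $p=e^{2\Phi_\beta(p)}/(1+e^{2\Phi_\beta(p)})$ from \cref{Sub}. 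The leading term is $\IE\bigl[\tfrac pN\sum_e(1-Y_e)\bigr]=p\t q$, which is $p(1-p)+O(n^{-1})$ by \cref{L2}. The linear correction $\tfrac{p(1-p)}N\sum_e\IE[(1-Y_e)\partial_{ij}\bar H]$ is $O(n^{-1})$: its mean part is bounded by \cref{meandiffbar}, and the covariance part $\IE[\t Y_e\,\partial_{ij}\bar H]$ is $O(n^{-1})$ by inserting the Hoeffding decomposition \cref{eq:partialABC} and the pair-covariance bound \cref{eq:GN84}; the quadratic remainder is $O(n^{-1})$ because $\partial_{ij}\bar H=O_r(n^{-1/2})$ by \cref{LH}. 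The analogous expansion for $\IE Q_{-1}(G)$ is identical but is not needed, since only $q_1$ enters \cref{l1}.

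The technical core is a single-edge Lipschitz estimate: for every edge $s$,
\[
|\Delta_s Q_1|:=|Q_1(x_{s+})-Q_1(x_{s-})|=O(n^{-2})
\]
uniformly in the configuration, and likewise for $Q_{-1}$. I would split the effect of flipping $s$ into the direct contribution coming from the factor $(1-Y_s)$ in the $e=s$ summand (note $\partial_s H$ does not depend on $Y_s$), which is at most $\tfrac1N\cdot 1=O(n^{-2})$, and the indirect contribution through the dependence of $\partial_e H$ on $Y_s$ for $e\neq s$. A homomorphism count gives $|\partial_e H(x_{s+})-\partial_e H(x_{s-})|=O(n^{-1})$ when $e$ and $s$ share a vertex and $O(n^{-2})$ when they are vertex-disjoint; since there are $O(n)$ edges of the first kind and $O(n^2)$ of the second, the indirect contribution is again $\tfrac1N\cdot O(1)=O(n^{-2})$.

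Granting the Lipschitz estimate, the two variance bounds follow from the Poincar\'e inequality for the Glauber dynamics, which holds with a constant uniform on $B$ throughout the subcritical region by rapid mixing (\cite{bhamidi2011mixing}; see also \cref{appendc}): in the normalization $\Var(f)\le C\sum_{s\in\mathcal I}\IE[(\Delta_s f)^2]$ one gets $\Var(Q_{\pm1})\le C\,N\cdot O(n^{-4})=O(n^{-2})$. For the two-step quantities I would use the Markov property to write $Q_{1,1}(G)=\IE[\mathbbm{1}_{M'=M+1}Q_1(G')\mid G]$ and hence
\[
Q_{1,1}(G)-Q_1(G)^2=\IE\bigl[\mathbbm{1}_{M'=M+1}\bigl(Q_1(G')-Q_1(G)\bigr)\mid G\bigr].
\]
On $\{M'=M+1\}$ the graph $G'$ differs from $G$ by a single edge, so $|Q_1(G')-Q_1(G)|\le\max_s|\Delta_s Q_1|=O(n^{-2})$, giving $|Q_{1,1}(G)-Q_1(G)^2|\le O(n^{-2})\,Q_1(G)=O(n^{-2})$ pointwise; the $Q_{-1,-1}$ bound is symmetric.

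The main obstacle is the single-edge Lipschitz estimate together with the uniform Poincar\'e inequality: the former needs careful homomorphism bookkeeping to confirm that \emph{every} contribution is genuinely $O(n^{-2})$ and not merely $O(n^{-1})$, while the latter requires a spectral gap bounded below uniformly on compact subsets of the full subcritical region rather than only Dobrushin's region. Once these are in place, all five estimates---and hence $D(E_n)\le Cn^{-2}$ via \cref{l1}, using that $q_1=p(1-p)+O(n^{-1})$ is bounded away from $0$ on $B$---are routine.
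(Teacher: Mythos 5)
Your proposal is correct, and for the heart of the lemma---the two variance bounds---it takes a genuinely different route from the paper. The paper exploits positive association of the edge indicators (valid because $\beta_2,\dots,\beta_m>0$) and Newman's covariance inequality, bounding $\operatorname{Var}Q_1(G)$ by $N^{-2}\sum_{e,f,g,h}\|\partial\phi_e/\partial Y_g\|_\infty\|\partial\phi_f/\partial Y_h\|_\infty\operatorname{Cov}(Y_g,Y_h)$ and then running a four-edge inclusion--exclusion case analysis to show the sum is $O(N^{-1})$. You instead establish the uniform single-edge Lipschitz bound $|\Delta_sQ_1|=O(n^{-2})$ (your bookkeeping of the direct $e=s$ term and of the indirect effect through $\partial_eH$, with $O(n)$ edges meeting $s$ at order $n^{-1}$ and $O(n^2)$ disjoint edges at order $n^{-2}$, matches the paper's own derivative estimates around \cref{dr}) and feed it into the Poincar\'e inequality \cref{eq:PoincareERGM}, which the paper itself proves in \cref{appendc} to hold on compact subsets of the full subcritical region with the exact normalization $\operatorname{Var}(f)\le\sigma^2\sum_s\IE[(\mathfrak{h}_sf)^2]$ you invoke; this gives $\operatorname{Var}Q_{\pm1}\le\sigma^2N\cdot O(n^{-4})=O(n^{-2})$. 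Your route is shorter, avoids the FKG/positive-association hypothesis entirely, and the Lipschitz constant you compute is reused directly for the two-step terms, where your identity $Q_{1,1}(G)-Q_1(G)^2=\IE[\mathbbm{1}_{M'=M+1}(Q_1(G')-Q_1(G))\mid G]$ yields the same pointwise $O(n^{-2})$ bound the paper obtains by estimating $|g_{f,G\cup e}-g_{f,G}|$ by hand. The estimate of $q_1$ is the same Taylor expansion as in \cref{eq:M0pV} combined with \cref{L2}. The only cosmetic caveat is that the uniform spectral-gap input should be attributed to the Poincar\'e inequality of Ganguly--Nam as recorded in \cref{appendc} rather than to the mixing-time result of \cite{bhamidi2011mixing} alone, but you already point to the right place.
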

\begin{proof}[Proof of \cref{l2}]
%Note that
  %\begin{align*}
  %Q_1(G)=N^{-1}\sum_{e\in \mathcal{E}(K_n)}(1-Y_e)\P(G,G_{e+}).
    %\end{align*}
From \eqref{tran}, we have \begin{align*}
     Q_1(G)=&N^{-1}\sum_{e\in \mathcal{I}}(1-Y_e)\frac{\exp(\partial_e H(G))}{1+\exp(\partial_e H(G))},
\end{align*}
where $\mathcal{I}=\{(i,j): 1\leq i<j\leq n\}$.\\
%where $K_n$ is the complete graph with these $n$ vertices.\\
%=&N^{-1}\sum_{e\in \mathcal{E}(K_n)}(1-G_e)p\\&+N^{-1}\sum_{e\in E(K_n)}(1-G_e)\left[\frac{\exp(\partial_e H(G))}{1+\exp(\partial_e H(G))}-\frac{\exp(2\Phi_{\beta}(p))}{1+\exp(2\Phi_{\beta}(p))}\right].

\medskip

\textbf{Estimate $q_1$:} 
From \cref{eq:M0W}, we have 
\begin{align*}
   q_1=&M_{0,+}(\t V,\t E)=\t p\,\t q+O(n^{-1})=pq+O(n^{-1}),
   \end{align*} 
   where we used $|p-\t p|=O(n^{-1})$ (see \cref{eqmu}) in the last equality. \\%that the function $$f(x)=\frac{e^x}{1+e^x}$$ is Lipschitz-1 and concave. In subcritical region, using \cref{L2} and \cref{LH} , we obtain
%\begin{align*}
   % |q_1-p(1-p)|=&N^{-1}\sum_{e\in E(K_n)}|\IE(1-G_e)\left[f(\partial_e H(G))-f(2\Phi_{\beta}(p))\right]|+O(n^{-1})\\
    %\leq &N^{-1}\sum_{e\in E(K_n)}|\IE\left|\partial_e H(G)-2\Phi_{\beta}(p)\right|+O(n^{-1})\\
  %  =&O(n^{-1/2}).
%\end{align*}
 %Now, $q_1=\IE Q_1(G)=p(1-p)+O(n^{-1/2}).$ \\
\medskip
 
 \textbf{Estimate $\operatorname{Var}Q_1(G)$ and $\operatorname{Var}Q_{-1}(G)$:} Denote $\frac{\exp(\partial_e H(G))}{1+\exp(\partial_e H(G))}$  by $g_{e,G}$. Then $\operatorname{Var}Q_1(G)$  can be written as
\begin{align}\label{varcov}
\operatorname{Var}Q_1(G)=&N^{-2}\operatorname{Cov}\left[\sum_{e\in \mathcal{I}}(1-Y_e)g_{e,G},\sum_{f\in \mathcal{I}}(1-Y_f)g_{f,G}\right]\notag\\
=&N^{-2}\sum_{e\in \mathcal{I}}\sum_{f\in \mathcal{I}}\operatorname{Cov}\big[(1-Y_e)g_{e,G},(1-Y_f)g_{f,G}\big].
\end{align}
  Note that $G:=(Y_e)_{e\in \mathcal{I}}$ are positively associated because $\beta_2,\dots, \beta_m>0$ in ERGM \cref{eq:ERGM}. From \cite{newman1980} (see also \cite[Lemma~3.2]{goldstein2018}), for every two real-valued differentiable functions $\phi$ and $\psi$ on $\mathbb{R}^{\mathcal{I}}$,
\begin{align}
   \operatorname{Cov}(\phi(G),\psi(G))\leq  C\sum_{g,h\in \mathcal{I}}\left\|\frac{\partial \phi}{\partial Y_g}\right\|_{\infty}\left\|\frac{\partial \psi}{\partial Y_h}\right\|_{\infty}\operatorname{Cov}(Y_g,Y_h).
\end{align}
Let $\phi_{e}(G)=(1-Y_e)g_{e,G}$ and $\phi_{f}(G)=(1-Y_f)g_{f,G}$. Next, we compute the  partial derivatives of $\phi_{e}$ and $\phi_{f}$.  %Like \cite{fang2024}, 
We have that
\begin{align*}
    \frac{\partial \phi_{e}}{\partial Y_{g}}=-\mathbbm{1}_{e=g}g_{e,G}+(1-Y_e)g_{e,G}\frac{\frac{\partial(\partial_eH(G))}{\partial Y_{g}}}{1+\exp(\partial_eH(G))}.
\end{align*}
which implies 
\begin{align}\label{or}
    \left\|\frac{\partial \phi_{e}}{\partial Y_g}\right\|_{\infty}\leq \mathbbm{1}_{e=g}+\left\|\frac{\partial(\partial_eH(G))}{\partial Y_{g}}\right\|_{\infty}.
\end{align}
Recall from \cref{r} that
\begin{align*}
      \partial_e H(y)=&\sum_{l=1}^m \beta_ln^{2-v_l}\operatorname{Hom}(H_l, y, e).
    \end{align*}
We label the vertices of $H_i$ by $l_1,\cdots,l_{v_i}$, let $[n]=\{1,\cdots,n\}$, and denote by $(e_1,e_2)$ the edge $e$ where $e_1$ and $e_2$ are the two vertices of $e$. Then $\operatorname{Hom}(H_i,G,(e_1,e_2))$ can be written as
\begin{align}\label{dr}
   \sum_{\substack{k_1,\cdots,k_{v_i}\in [n]\\k_1,\cdots,k_{v_i}\;\text{are distinct}}}\sum_{1\leq f\neq j\leq v_j}\mathbbm{1}_{e_1=k_f,e_2=k_j}\mathbbm{1}_{(l_f,l_j)\in \mathcal{E}(H_i)}\prod_{\{l_u,l_v\}\in \mathcal{E}(H_i)\setminus \{l_f,l_j\}}Y_{(k_u,k_v)}.
\end{align}
%We consider the partial derivative with respect of each $g\in \mathcal{E}(K_n)$. 
From \eqref{dr}, it is not difficult to find that if %\brown{\sout{$e$ and $g$  share both two vertices} 
$e=g$, the order of \eqref{or} is  $O(1)$, if $e$ and $g$  share a common vertex, the order of \eqref{or} is  $O(n^{-1})$, and if $e$ and $g$  do not share any vertex, the order of \eqref{or} is  $O(n^{-2})$. 
%\sout{The order of $\|\partial \phi_{f} / \partial Y_h\|_{\infty}$ for any $h\in \mathcal{E}(K_n)$ is similar.} 
Together with a similar argument for $\|\partial \phi_{f} / \partial Y_h\|_{\infty}$, we obtain 
\begin{align}\label{cov11}
    \left\|\frac{\partial \phi_{e}}{\partial Y_g}\right\|_{\infty}=O(n^{v(e\cap g)-2})\quad\textit{and}\quad \left\|\frac{\partial \phi_{f}}{\partial Y_h}\right\|_{\infty}=O(n^{v(f\cap h)-2}).
\end{align}
Furthermore, by \cite[Eqs.(86) and (87)]{ganguly2019}, we deduce that for  two edges $g$ and $h$ 
\begin{align}\label{cov12}
    \operatorname{Cov}(Y_g,Y_h)=O(n^{v(g\cap h)-2}).
\end{align}
Combing \cref{varcov,cov11,cov12}, we obtain 
\begin{align}\label{Varq1}
     \operatorname{Var}Q_1(G)\leq  & CN^{-2}\sum_{e,f,g,h\in \mathcal{I}}\left\|\frac{\partial \phi_{e}}{\partial Y_g}\right\|_{\infty}\left\|\frac{\partial \phi_{f}}{\partial Y_h}\right\|_{\infty}\operatorname{Cov}(Y_g,Y_h)\notag\\
     \leq &CN^{-2}\sum_{e,f,g,h\in \mathcal{I}} n^{v(e\cap g)-2}\cdot n^{v(f\cap h)-2}\cdot n^{v(g\cap h)-2}\notag\\
     \leq &\sup_{e,f,g,h\in \mathcal{I}}CN^{-2}\cdot n^{v(e\cup f\cup g\cup h)}\cdot n^{v(e\cap g)+v(f\cap h)+v(g\cap h)-6}.
\end{align}
According to the inclusion exclusion principle, we have
\begin{align*}
 &v(e\cup f\cup g\cup h)+v(e\cap g)+v(f\cap h)+v(g\cap h)\\
 =&v(e)+v(f)+v(g)+v(h)-v(e\cap f)-v(e\cap h)-v(f\cap g)  +v(e\cap f \cap g)\\
 &+v(e\cap f \cap h) +v(e\cap g \cap h)+v(f\cap g \cap h)-v(e\cap f \cap g\cap h)\\
=:&8-V_1+V_2,
\end{align*}
where
\begin{align*}
    V_1=&v(e\cap f)+v(e\cap h)+v(f\cap g)-(v(e\cap f \cap g)+v(e\cap f \cap h) +v(e\cap g \cap h)),\\
    V_2=&v(f\cap g \cap h)-v(e\cap f \cap g\cap h).
\end{align*}
Apparently, $V_1\geq 0$ and $0\leq V_2\leq 2$. We aim to prove 
\begin{align}\label{v1v2}
    V_2-V_1\leq 0.
\end{align} 
\begin{itemize}
    \item Firstly, if $V_2=0$, then $V_2-V_1\leq 0$ is trivial. 
    \item Secondly, if $V_2=2$, which means $f=g=h$ and $v(e\cap f)=0$, then $V_1=2$ and $V_2-V_1=0$.
    \item Thirdly, we consider the case $V_2=1$ and $v( f \cap g\cap h)=1$. On the one hand, if $f,g,h$ are distinct, then $v(e\cap f \cap g)+v(e\cap f \cap h) +v(e\cap g \cap h)=0$, and $v(e\cap f)+v(e\cap h)+v(f\cap g)\geq 1$, which means $V_2-V_1\leq 0$. On the other hand, we consider the case that two of $f,g,h$ are the same. If $f=g\neq h$, then $v(e\cap f\cap h)=v(e\cap f \cap g\cap h)=0$ and $v(f\cap g)=2$. From $v(e\cap f)+v(e\cap h)-(v(e\cap f \cap g) +v(e\cap g \cap h))\geq 0$, we have $V_2-V_1\leq 1-2=-1$. Similarly, if $f=h\neq g$ or $g=h\neq f$, we also have $V_2-V_1\leq 0$ .
    \item  Finally, if $V_2=1$ and $v( f \cap g\cap h)=2$, which means $f=g=h$ and  $v(e\cap f)=1$, then $V_2-V_1=0$. 
\end{itemize}

 Substituting \eqref{v1v2} into \eqref{Varq1} yields
\begin{align*}
    \operatorname{Var}Q_1(G)\leq CN^{-2}\cdot n^{8-V_1+V_2-6}\leq CN^{-1}.
\end{align*}
%&  \leq &C\sum_{g=h}\left|\frac{\partial \phi_{e}}{\partial Y_g}\right|_{\infty}\left|\frac{\partial \phi_{f}}{\partial Y_h}\right|_{\infty}\operatorname{Cov}(Y_g,Y_h)\\&+C\sum_{g\;\text{and}\;h\;\text{share a vertex}}\left|\frac{\partial \phi_{e}}{\partial Y_g}\right|_{\infty}\left|\frac{\partial \phi_{f}}{\partial Y_h}\right|_{\infty}\operatorname{Cov}(Y_g,Y_h)\\&+C\sum_{g\;\text{and}\;h\;\text{do not share a vertex}}\left|\frac{\partial \phi_{e}}{\partial Y_g}\right|_{\infty}\left|\frac{\partial \phi_{f}}{\partial Y_h}\right|_{\infty}\operatorname{Cov}(Y_g,Y_h)\\\leq&Cn^{4-v(g\cap h)}\cdot n^{v(e\cap g)-2}\cdot n^{v(f\cap h)-2}\cdot n^{v(g\cap h)-2}\\
Similarly, the order of $\operatorname{Var}Q_{-1}(G)$ is also $O(N^{-1}).$\\

\medskip
\textbf{Estimate $ \IE\left|Q_{1, 1}(G)-Q_{1}(G)^{2}\right|$ and $ \IE\left|Q_{-1, -1}(G)-Q_{-1}(G)^{2}\right|$: } Represent $Q_{1,1}(G)$ as
\begin{align*}
    N^{-2}\sum_{e\neq f\in \mathcal{I}}(1-Y_e)(1-Y_f)g_{e,G}g_{f,G\cup e}
\end{align*}
and $Q_{1}^2(G)$ as
\begin{align*}
     N^{-2}\sum_{e, f\in \mathcal{I}}(1-Y_e)(1-Y_f)g_{e,G}g_{f,G}.
\end{align*}
Therefore,
\bes{
Q_{1, 1}(G)-Q_{1}(G)^{2}=&N^{-2}\sum_{e\neq f\in \mathcal{I}}(1-Y_e)(1-Y_f)g_{e,G}(g_{f,G\cup e}-g_{f,G}) \\
&-N^{-2}\sum_{e}(1-Y_e)g^2_{e,G}.
}
If $e$ and $f$ do not share any vertex, then $|g_{f,G\cup e}-g_{f,G}|\leq C/N$. If $e$ and $f$ share one vertex, then $|g_{f,G\cup e}-g_{f,G}|\leq C/n$. In any case, 
\begin{align*}
    \left|Q_{1, 1}(G)-Q_{1}(G)^{2}\right|=O(N^{-1}).
\end{align*}
Similarly, we have 
\begin{align*}
    \left|Q_{-1,-1}(G)-Q_{-1}(G)^{2}\right|=O(N^{-1}).
\end{align*}
%\sout{The proof is complete.}
\end{proof}
Combining \cref{Kolo}, \cref{l1,l2}, we obtain the desired result \cref{eq:DEn} and conclude that
\begin{align*}
    d_{\rm{loc}}(W,Z^d)\leq Cn^{-9/8}.
\end{align*}
%which implies the local limit theorem for exponential random graphs in Dobrushin’s uniqueness region \sout{holds}.

\section{Proof of \cref{LH,LI,lemmaB3}}\label{sec:prooflemma}
\begin{proof}[Proof of Lemma \ref{LH}]

\textbf{Step 1: Higher-order concentration inequalities.} As in \cite{Sam2020}, as building blocks of the Hoeffding decomposition under the ERGM in Dobrushin's uniqueness region, 
    %we build  the Hoeffding decomposition of $\partial_{ij}H-\textit{mean}$ under the ERGM in Dobrushin’s uniqueness region. 
    we define centered random variables
     \begin{align}\label{hoff}
        f_{d,A}(Y) :=& \sum_{I \in \mathcal{I}^d} A_I g_I(Y), \notag\\
        g_I(Y):=&  \sum_{P \in \mathcal{P}(I)} (-1)^{M(P)} \left\{ M(P) 1_{\{N(P)=0\}} + \prod_{\substack{J \in P \\ |J|=1}} (Y_J - \widetilde{p}) 1_{\{N(P)>0\}} \right\} \notag\\&\quad\quad\quad\quad\quad\quad\quad\quad\quad\quad\quad\quad\times \prod_{\substack{J \in P \\ |J|>1}} \left\{ \IE \prod_{l \in J} (Y_l - \widetilde{p}) \right\},
    \end{align}
where \( d \geq 1 \) is an integer, \( \mathcal{I} = \{(i,j) : 1 \leq i < j \leq n\} \), \( A \) is a \( d \)-tensor with vanishing diagonal (i.e., \( A_I \neq 0 \) only if all the \( d \) elements in \( I \) are distinct),

$$\mathcal{P}(I) = \{S \subset 2^I : S \text{ is a partition of } I\},$$ and
\( N(P) \) (\( M(P) \), resp.) is the number of subsets with one element (more than one element, resp.) in the partition \( P \) and for a singleton set \( J = \{l\} \), \( Y_J := Y_l \). We will write \( f_{d,A} := f_{d,A}(Y) \) and \( g_I := g_I(Y) \) for simplicity of notation. 
%For our purpose, it is enough to consider those values of \( d \) bounded by the maximum number of edges of graphs \( H_1, \ldots, H_m \). 

We have the following result.
\begin{lemma}\label{appc} In the compact subset $B$ of the subcritical region, we have
\begin{align}\label{highorder}
    f_{d,A} =O_{r} (\|A\|_2),
\end{align}
where $\|A\|_2$ is the Euclidean norm of the tensor $A$ when viewed as a vector. 
\end{lemma}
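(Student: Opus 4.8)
The goal is to show that $\|f_{d,A}\|_r \le C_r \|A\|_2$ for every fixed $r \ge 1$, with $C_r$ uniform over $B$; since $f_{d,A}$ is centered by construction, this amounts to bounding its centered moments. The plan is to view $f_{d,A}$ as a degree-$d$ chaos in the weakly dependent edge indicators and to bound all of its $L^r$ norms by its $L^2$ norm, which in turn is comparable to $\|A\|_2$. The guiding heuristic comes from the independent reference model $G(n,p)$ with $\t p = p$: there every block $J$ with $|J|>1$ carries a factor $\IE\prod_{l\in J}(Y_l-p)=0$, so in \eqref{hoff} only the all-singletons partition survives and $g_I=\prod_{l\in I}(Y_l-p)$. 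Thus $f_{d,A}=\sum_I A_I\prod_{l\in I}(Y_l-p)$ is an exact order-$d$ Rademacher chaos, for which hypercontractivity gives $\|f_{d,A}\|_r\le (r-1)^{d/2}\|f_{d,A}\|_2$, and by orthogonality together with $\Var(Y_l-p)=O(1)$ one has $\|f_{d,A}\|_2\asymp \|A\|_2$. The partition corrections in \eqref{hoff} are designed precisely to restore this picture — approximate orthogonality of the family $\{g_I\}$ and approximate centering — under weak dependence.

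First I would prove the $L^2$ bound $\IE f_{d,A}^2 \le C\|A\|_2^2$. Expanding $\IE f_{d,A}^2 = \sum_{I,I'} A_I A_{I'}\,\IE[g_I g_{I'}]$, the correction terms make $\IE[g_I g_{I'}]=O(1)$ when $I$ and $I'$ coincide as edge sets, while forcing decay in the number of non-shared edges otherwise, using the conditional-independence and factorization estimates of \cite{ganguly2019} recorded in \eqref{GN} and \eqref{eq:GN84}. A Cauchy--Schwarz estimate together with counting the overlapping index tuples then yields the claim.

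To bootstrap to all $L^r$, I would invoke the Poincar\'e-based higher-order concentration inequality developed in \cref{appendc}. The Glauber dynamics of the ERGM in $B$ has a spectral gap bounded below in the subcritical (high-temperature) regime, giving a uniform Poincar\'e inequality; iterating the associated discrete difference operators $d$ times (following \cite{gotze2019higher}) controls $\|f_{d,A}-\IE f_{d,A}\|_r$. The key structural fact is that resampling an edge $g$ turns $f_{d,A}$ into an object of the same form but of degree $d-1$, with a contracted tensor whose Euclidean norm is controlled by $\|A\|_2$, while differences of order greater than $d$ vanish; feeding these norm bounds into the iterated inequality produces $\|f_{d,A}\|_r \le C_{r,d}\|A\|_2$.

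The main obstacle is the bookkeeping that preserves the Euclidean norm $\|A\|_2$ — rather than a larger norm such as $\|A\|_1$ — at every step. In the $L^2$ cross-term sum and, more delicately, in tracking how the Hilbert--Schmidt norm of the contracted tensor transforms under each discrete differentiation, the summation over the contracted index must be absorbed by Cauchy--Schwarz together with the covariance-decay estimates, and the weak-dependence error terms must be shown not to accumulate across the $d$ iterations. Establishing the uniform spectral gap over the compact set $B$ and verifying that the partition corrections in \eqref{hoff} remain well-behaved under the difference operators are the technical crux of the argument.
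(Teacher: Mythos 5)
Your proposal follows essentially the same route as the paper: the paper's Appendix C derives a discrete higher-order concentration inequality from the Poincar\'e inequality for the ERGM in the subcritical region (via \cite[Theorem~2.1]{ganguly2019}), iterating the discrete difference operators in the style of \cite{gotze_higher_2019}, and then bounds the Hilbert--Schmidt norms of the iterated differences of $f_{d,A}$ by $\|A\|_2$ following the proof of \cite[Theorem~3.7]{Sam2020} --- exactly the spectral-gap-plus-iterated-differences scheme you describe. Your preliminary $L^2$ step and the tensor-contraction bookkeeping are the same ingredients, just organized slightly differently.
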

In fact, in Dobrushin’s uniqueness region, \cref{highorder} was shown in \cite[Theorem~3.7]{Sam2020} using the modified logarithmic Sobolev inequality. We extend the result to the subcritical region using the Poincar\'e inequality by adapting the proofs in \cite{gotze_higher_2019} and \cite{Sam2020}. See \cref{appendc}.

For $k$ distinct indices
$s_1,\cdots, s_k \in \mathcal{I}$, by considering $Y_{s_i}=\t Y_{s_i}+\t p$, we have
\begin{align}\label{hof}
Y_{s_1}\cdots Y_{s_k} - \IE[Y_{s_1}\cdots Y_{s_k}] = \sum_{l=1}^k \t{p}^{k-l} \sum_{1\leq i_{1} < \cdots < i_l \leq k} [\widetilde{Y}_{s_{i_{1}}}\cdots \widetilde{Y}_{s_{i_l}} - \textit{mean}].
\end{align}
Completing each $\widetilde{Y}_{s_{i_{1}}}\cdots \widetilde{Y}_{s_{i_l}} - \textit{mean}$
to $g_{\{s_{i_{1}} ,\cdots,s_{i_{l}}\}}$ (recall \eqref{hoff})  and using \eqref{GN}, we obtain
%\begin{align}\label{hof2}
%    \sum_{1\leq i_{1} < \cdots < i_l \leq m} \widetilde{Y}_{s_{i_{1}}}\cdots \widetilde{Y}_{s_{i_k}} - \textit{mean}=g_{\{s_{i_{1}} ,\cdots,s_{i_{l}}\}}+\sum_{A:A\subset \{s_{i_{1}} ,\cdots,s_{i_{l}}\}}O(n^{-1})g_{A}
%\end{align}
\begin{align}\label{hof2}
    Y_{s_1}\cdots Y_{s_k} - \IE[Y_{s_1}\cdots Y_{s_k}]=\sum_{l=1}^k \left(\t p^{k-l}+O(\frac{1}{n}) \right)\sum_{1\leq i_1<\cdots<i_l\leq k}g_{\{s_{i_{1}} ,\cdots,s_{i_{l}}\}},
\end{align}
where $O(1/n)$ comes from the last line of \cref{hoff}.

\textbf{Step 2: Decomposition of $\partial_{ij}\bar{H}$.} Recall \cref{r,eq:pijbH}.
We observe that $\partial_{ij}\bar{H}$ is a linear combination of terms as on the left-hand side of \cref{hof2}. Equating those terms to the right-hand side of \cref{hof2}, collecting those terms with $l=2$ and $\{s_{i_1}, s_{i_2}, (i,j)\}$ forms a triangle, or $l=1$ and $s_{i_1}$ connects to either $i$ or $j$, and neglecting the $O(1/n)$ factor results in 
\be{
\partial_{ij}\bar{H}_A=\sum_{s\neq i,j}\frac{2\sum_{l=2}^m \beta_ls_l\widetilde{p}^{e_l-2}(\widetilde{Y}_{is}+\widetilde{Y}_{js})+6\sum_{l=2}^m\beta_lt_l\widetilde{p}^{e_l-3}(\widetilde{Y}_{is}\widetilde{Y}_{sj}-\textit{mean})}{n}
}
as defined in \cref{Ha}. See \cref{fig:enter-label} for an illustration of the terms we collected in $\partial_{ij}\bar{H}_A$. These terms involve only one additional vertex besides $i$ and $j$ and are the dominating part of $\partial_{ij}\bar{H}$.
\begin{figure}[htbp]
    \centering
\includegraphics[width=1.3\linewidth]{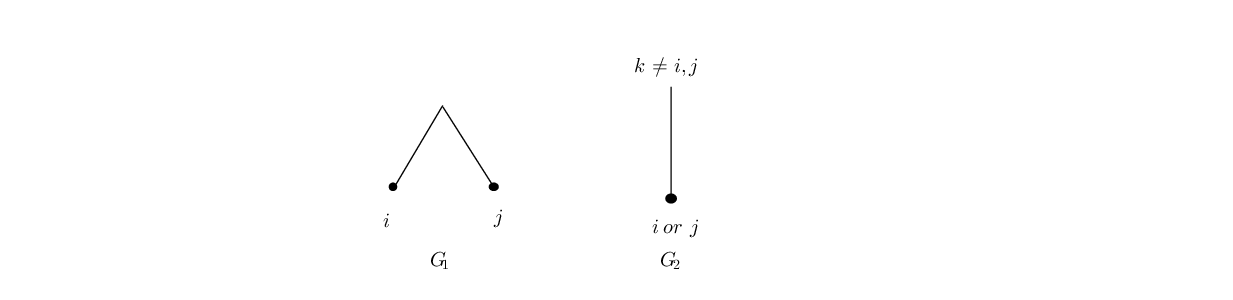}
    \caption{Two graphs in $\ptdH_A$}
    \label{fig:enter-label}
\end{figure}
Because there are $O(n)$ terms (the choice of the additional vertex $s$) in the numerator of $\partial_{ij}\bar{H}_A$, we obtain by \cref{highorder} that
\ben{\label{eq:partialAorder}
\partial_{ij}\bar{H}_A=O_{r}(\sqrt{n}/n)=O_{r}(n^{-1/2}).
}
Similarly, we collect those terms involving two additional vertices besides $i$ and $j$ to form $\partial_{ij}\bar{H}_B$ as in \cref{Hb} and its order is
\be{
\partial_{ij}\bar{H}_B=O_{r}(n/n^2)=O_{r}(n^{-1}).
}
The remaining terms are collected to be $\partial_{ij}\bar{H}_C$. The sum of those terms involving three or more additional vertices besides $i$ and $j$ is of order $O_{r}(n^{-3/2})$ again by \cref{highorder}. Moreover, the sum of those terms involving the $O(1/n)$ factor on the right-hand side of \cref{hof2} that are omitted in $\partial_{ij}\bar{H}_A$ and $\partial_{ij}\bar{H}_B$ are also of order $O(1/n)O_{r}(n^{-1/2})=O_{r}(n^{-3/2})$. Therefore, 
\be{
\partial_{ij}\bar{H}_C=O_{r}(n^{-3/2}).
}
This proves the lemma.
\end{proof}

To prove  \cref{lemmaB3}, we need the following Lemma.
\begin{lemma}\label{lemma:expectation of K}
Under the same assumption as in \cref{lemmaB3},
for any graph $K$ with $v\geq 4$ vertices, we
have 
\begin{equation}
   \label{eq:10001}
  \begin{aligned}
    \E \t K=O(n^{v-2}+n^{v/2+1/2}).
  \end{aligned}
\end{equation}

\end{lemma}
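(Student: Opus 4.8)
The plan is to compute $\E\t K$ by peeling its edges one at a time through the conditional‑expectation (Glauber) identity already used in \cref{eq:M0pV}, together with the decomposition \cref{eq:partialABC} of $\partial_{ij}\bar H$. Writing $\E\t K=\sum_{\phi}\E\prod_{s\in E_\phi}\t Y_s$, where $\phi$ runs over the injective embeddings of $K$ and $E_\phi$ is the corresponding edge set, I fix one edge $s_1=(i,j)$ of a copy and condition on all the others. Since $\E[\t Y_{s_1}\mid (Y_f)_{f\neq s_1}]=\frac{e^{\partial_{s_1}H}}{1+e^{\partial_{s_1}H}}-\t p$, the Taylor expansion around $2\Phi_\beta(p)$ used in \cref{eq:M0pV}, combined with $p-\t p=O(n^{-1})$ from \cref{eqmu}, gives
\begin{align*}
\E\Big[\prod_{s\in E_\phi}\t Y_s\Big]
= p(1-p)\,\E\Big[\partial_{s_1}\bar H\prod_{s\neq s_1}\t Y_s\Big]
+ O(n^{-1})\,\E\Big[\prod_{s\neq s_1}\t Y_s\Big]
+ \E\Big[O\big((\partial_{s_1}\bar H)^2\big)\prod_{s\neq s_1}\t Y_s\Big].
\end{align*}
The effect is to replace the single factor $\t Y_{s_1}$ by the much smaller, structured object $p(1-p)\,\partial_{s_1}\bar H$, at the cost of summing over a fresh vertex.

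Substituting \cref{eq:partialABC} and keeping the dominant piece $\partial_{s_1}\bar H_A$ from \cref{Ha}, the factor $\t Y_{s_1}$ becomes $n^{-1}\sum_{x}$ of either a pendant $(\t Y_{ix}+\t Y_{jx})$ or a cherry $(\t Y_{ix}\t Y_{xj}-\textit{mean})$ attached at the endpoints of $s_1$; after summing over $\phi$ and $x$ this produces $n^{-1}$ times the expectation of a modified subgraph count. The crucial gain is that $n^{-1}\sum_{x}$ of a \emph{centered} pendant/cherry is a sum of $\Theta(n)$ weakly dependent centered terms, so by the higher‑order concentration inequality \cref{highorder} (equivalently \cref{LH}) it is only $O_r(n^{-1}\cdot n^{1/2})=O_r(n^{-1/2})$ rather than $O(1)$; this square‑root cancellation is precisely what yields the $n^{v/2+1/2}$ term, while the non‑cancelling, genuinely correlated part yields the leading $n^{v-2}$ term. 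The remaining contributions $\partial_{s_1}\bar H_B,\ \partial_{s_1}\bar H_C$ and $(\partial_{s_1}\bar H)^2$ are of order $O_r(n^{-1}),O_r(n^{-3/2}),O_r(n^{-1})$ by \cref{LH}, and together with the explicit $O(n^{-1})$ remainder above they feed into strictly lower‑order modified counts, controlled for their fluctuations by \cref{LI}. The recursion is organized as an induction on a complexity measure of $K$ (e.g.\ the number of independent cycles, which the pendant substitution lowers, together with the length of the graph to absorb the edge‑subdivisions produced by the cherry substitution), with base case the forests, where all edges are peeled via repeated leaf removal and \cref{highorder}.

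The main obstacle is the bookkeeping: each peeling step generates a bounded but proliferating family of modified subgraphs, and for every one of them one must separate the ``mean‑type'' contribution (handled by the induction hypothesis, contributing to $n^{v-2}$) from the ``fluctuation‑type'' contribution (handled by \cref{highorder} and \cref{LI}, contributing to $n^{v/2+1/2}$), and verify uniformly over the compact set $B$ that no branch of the recursion exceeds $O(n^{v-2}+n^{v/2+1/2})$. The delicate point is that the operation $n^{-1}\sum_x$ simultaneously raises the vertex count and supplies cancellation, so the accounting of powers of $n$ must be done carefully; here \cref{highorder} is the essential non‑elementary input, since the crude correlation estimate \cref{eq:GN84} used term‑by‑term would only give the far weaker $O(n^{v-1})$. (This is also exactly why the resulting bound is weaker than \cref{lemmaB3} for $v=4$, the one regime in which $n^{v/2+1/2}$ dominates $n^{v-2}$.)
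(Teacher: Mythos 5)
Your opening move coincides with the paper's: fix one edge $s_1=(i_1,i_2)$ of $K$, replace $\t Y_{s_1}$ by its conditional expectation given the other edges, and Taylor-expand as in \cref{eq:M0pV}. But from there the paper does something much simpler than what you propose, and the extra structure you try to build contains genuine gaps. The paper observes that, writing $S_{i_1,i_2}$ for the sum over the remaining vertices of the product of the other edge indicators, one has $\E[\t Y_{s_1}S_{i_1,i_2}]=\E[\t Y'_{s_1}(S_{i_1,i_2}-\E S_{i_1,i_2})]$ because $\E\t Y'_{s_1}=\E\t Y_{s_1}=0$ (see \cref{eq:10003}). Consequently the deterministic part $p-\t p$ of the Taylor expansion pairs with a centered quantity and contributes \emph{nothing}, and the whole expectation collapses to a single covariance $\E[O(\partial_{i_1,i_2}\bar H)(S_{i_1,i_2}-\E S_{i_1,i_2})]$. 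One application of Cauchy--Schwarz with $\partial_{i_1,i_2}\bar H=O_r(n^{-1/2})$ from \cref{LH} and the variance bound $\Var(S_{i_1,i_2})=O(n^{v-2}+n^{2v-7})$ of \cref{eq:10005} (itself a direct consequence of the Hoeffding decomposition \cref{hof2} and the higher-order concentration inequality \cref{highorder}) then yields $n^2\cdot n^{-1/2}\cdot(n^{(v-2)/2}+n^{v-7/2})=O(n^{v/2+1/2}+n^{v-2})$. No recursion is needed; the ``square-root cancellation'' you correctly sense is applied once, to the $e(K)-1$ remaining edges through $S_{i_1,i_2}$, not iteratively through $\partial\bar H_A$.

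The recursive scheme you propose instead does not close. First, termination: the cherry substitution coming from the triangle term of \cref{Ha} subdivides the peeled edge, which preserves the cycle rank and increases $v$ and $e$, and the pendant substitution only lowers the cycle rank when $s_1$ lies on a cycle; if $s_1$ is a bridge or a leaf edge the substitution produces a graph of the same complexity with one more vertex. Your proposed tiebreaker (``the length of the graph to absorb the edge-subdivisions'') is not a well-founded decreasing quantity, so the induction has no guaranteed end. Second, the base case is not established: for forests, peeling a leaf edge by the same Glauber/Taylor step regenerates pendant and cherry terms of the same type, and the alternative of bounding $\E\t T$ term-by-term via \cref{eq:GN84} gives only $O(n^{v-1})$, as you yourself note. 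Third, the uniform verification that every branch of the proliferating family of modified counts stays within $O(n^{v-2}+n^{v/2+1/2})$ is exactly the content of the lemma and is left as an acknowledged ``obstacle'' rather than proved. The fix is to abandon the recursion after the first conditioning step and exploit the centering of $S_{i_1,i_2}-\E S_{i_1,i_2}$ as above.
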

\begin{proof}[Proof of  \cref{lemma:expectation of K}]
 Let
$\mathcal{V}(K)=\{1,2, \ldots ,v\}$.  By the definition of $\t K$, 
  \begin{equation}
     \label{eq:10002}
    \begin{aligned}
      \t K=\frac{1}{\text{Aut}(K)} \sum_{ \substack{i_{1}, \ldots ,i_v=1 \\ i_{1},       \ldots ,i_v  \text{ are distinct} }  }^{ n } \prod_{(s,t)\in \mathcal{E}(K)}
      \widetilde{Y}_{i_s,i_t}  .
    \end{aligned}
  \end{equation}
  Without loss of generality, we assume that $(1,2)\in \mathcal{E}(K)$. Then
  \begin{equation}
     \label{eq:10003}
    \begin{aligned}
    \E \t K &=\frac{1}{\text{Aut}(K)} \sum_{ i_{1},i_{2}=1,i_{1}\neq i_{2} }^{ n }  \E
    \widetilde{Y}_{i_{1},i_{2}} \sum_{  \substack{i_{3}, \ldots ,i_v=1 \\ i_{1},       \ldots ,i_v  \text{ are distinct} }   }^{ n }\prod_{(s,t)\in \mathcal{E}(K)\backslash
    (1,2)}\t Y_{i_s, i_t}\\
    & = \frac{1}{\text{Aut}(K)} \sum_{ i_{1},i_{2}=1,i_{1}\neq i_{2} }^{ n }  \E
    \widetilde{Y}_{i_{1},i_{2}}' (S_{i_{1},i_{2}}-\E S_{i_{1},i_{2}}),
    \end{aligned}
  \end{equation}
  where $\widetilde{Y}_{i_{1},i_{2}}'$ has the conditional distribution
  of $\widetilde{Y}_{i_{1},i_{2}}$ give all the other edge indicators, $\widetilde{Y}_{i_{1},i_{2}}'$
  and $\widetilde{Y}_{i_{1},i_{2}}$ are conditionally independent, and $$S_{i_{1},i_{2}}=
    \sum_{ \substack{i_{3}, \ldots ,i_v=1 \\ i_{1},       \ldots ,i_v  \text{ are distinct} }  }^{ n }\prod_{(s,t)\in \mathcal{E}(K)\backslash
    (1,2)}\t Y_{i_s, i_t}.$$
Following the computation of the conditional expectation of $Y_{i_1,i_2}'$ and using Taylor's expansion as in \cref{eq:M0pV}, we have 
\begin{equation}
   \label{eq:10004}
  \begin{aligned}
 \E \t K = \frac{1}{\text{Aut}(K)} \sum_{ i_{1},i_{2}=1,i_{1}\neq i_{2} }^{ n }  \E (p+
 O(\partial_{i_{1},i_{2}} \bar{H})-\widetilde{p}) (S_{i_{1},i_{2}}-\E S_{i_{1},i_{2}}).
  \end{aligned}
\end{equation}
 We claim that for any $i_{1}$ and $i_{2}$. 
\begin{equation}
   \label{eq:10005}
 \Var(S_{i_{1},i_{2}})=O(n^{v-2}+ n^{2v-7}).
\end{equation}
Then, applying the facts that $\partial_{i_{1},i_{2}} \bar{H}=O_{r}(n^{1/2})$ from \cref{LH}, $p-\t p=O(1/n)$ from \cref{L2} and \cref{eq:10005} in \cref{eq:10004}, we complete the proof of \cref{eq:10001}. 

It
remains to prove \cref{eq:10005}. 
By symmetry, we assume without loss of generality that $i_{1}=1$ and $i_{2}=2$. 
According to the Hoeffding decomposition as in \cref{hof2},
%in Step 1 of the Proof of \cref{LH}, 
we have
\begin{equation}
   \label{eq:10006}
  \begin{aligned}
    S_{1,2}-\E S_{1,2}=&\sum_{ \substack{i_{3}, \ldots ,i_v=1 \\ 1, 2, i_3      \ldots ,i_v  \text{ are distinct} }  }^{ n }g_{\{(i_s, i_t): (s,t)\in K\backslash (1,2)\}}\\
    %+  \sum_{ e_2,e_3\in E } r_{e_2,e_3}
    %f_{1,2,K\backslash \{ (1,2), e_2, e_3 \}} \\
    &+ \dots+ O(n^{-1})\sum_{
    l=1,l\neq 1,2 }^{ n }  (O(n^{v-3})\widetilde{Y}_{1,l}+O(n^{v-3})\widetilde{Y}_{2,l}+
   O(n^{v-3}) \widetilde{Y}_{1,l}\widetilde{Y}_{2,l}).\\
  \end{aligned}
\end{equation}
 %where $f_{1,2,G}$ is defined by taking $A_I=A_{i_{1},i_{2}, \ldots ,i_{v(G)}}$ in \cite[(13)]{Sam2020} equal to $1$ for $(i_{1},i_{2})=(1,2)$ and zero otherwise. 
 Then by \cref{highorder}, we
 have 
 \begin{equation}
    \label{eq:10007}
   \begin{aligned}
     S_{1,2}-\E S_{1,2} = O_{r}(n^{(v-2)/2}+ n^{v-\frac{7}{2}}).
   \end{aligned}
 \end{equation}
 \end{proof}

\begin{proof}[Proof of \cref{lemmaB3}]
 %When $v\geq 5$ and $e\geq 4$, it is a immediate result of \cref{lemma:expectation of K} . If $v\geq 7$ and $e\geq 3$, then $K$ must have isolate vertex. If $v=6$ and $e\geq 3$, only the graph  consisting of three disjoint edges has no isolated vertex. In this case, the covariance of any two edges is $O(n^{-2})$, which implies $\IE \t K=O(n^4)$. If $v=5$ and $e\geq 3$, only the graph  consisting of two disjoint part(a edge and a two star), has no isolated vertex, in this case. The covariance of this edge with any edge of two star is $O(n^{-2})$, which implies $\IE \t K=O(n^3)$. 
The case $v\geq 5$ follows from \cref{lemma:expectation of K}. When $v=3$, $K$ is either a two star or a triangle. For every $i<j,k\neq i,j$, from \cref{eqmu} and \cref{GN}, we have 
\begin{align*}
    \IE \t Y_{ik}\t Y_{jk}=&O(1/n),\\
    \IE \t Y_{ik}\t Y_{jk}\t Y_{ij}=& O(1/n),
\end{align*}
which implies 
\ben{\label{cov1}
\IE \t K=O(n^2)\  \text{when}\  v=3.
}

 It remains to prove $\E \t K=O(n^2)$ when $v=4$. We only prove the case where $K=\tstar$ is the three star and
\be{
\t K=\widetilde{\tstar}=\sum_{i=1}^n\sum_{j<k<l:\atop j,k,l\ne i}\widetilde{Y}_{ij}\widetilde{Y}_{ik}\widetilde{Y}_{il},
}
and the other cases are similar. 
Our strategy is to use the exchangeable pair constructed above \cref{P2} and find an approximate linearity condition
\be{
\E(\t \tstar'-\t \tstar|G)=-\lambda \t \tstar +\text{Err.}
}
Then, $\E \t \tstar=\E(\text{Err.})/\lambda$ will give us an estimate.

%Using the same notation as for \cref{LH}, 
From the construction of the exchangeable pair, we have 
 \begin{align}\label{B1}
     \IE(\widetilde{\tstar}^{\prime}-\widetilde{\tstar}\mid G)=&\frac{1}{N}\IE\sum_{i<j}(\E[\widetilde{Y}_{ij}^{\prime}|G]-\widetilde{Y}_{ij})\sum_{k<l:\atop k,l\neq i,j}(\widetilde{Y}_{ik}\widetilde{Y}_{il}+\widetilde{Y}_{jk}\widetilde{Y}_{jl})\notag\\
   = & -\frac{3}{N}\widetilde{\tstar}+\frac{1}{N}\sum_{i<j}\sum_{k<l:\atop k,l\neq i,j}(\widetilde{Y}_{ik}\widetilde{Y}_{il}+\widetilde{Y}_{jk}\widetilde{Y}_{jl})\E[\widetilde{Y}_{ij}^{\prime}|G].
 \end{align}
Following the same derivation as in \cref{eq:M0pV} and using $\partial_{ij}\bar{H}=O_{r}(n^{-1/2})$ from \cref{LH}, we have
\be{
\E[\widetilde{Y}_{ij}^{\prime}|G]=p-\t p+p(1-p)\partial_{ij}\bar{H}+O(1)(\partial_{ij}\bar{H})^2+O(1)(\partial_{ij}\bar{H})^3+O_{r}(n^{-2}).
}
Plugging this into \cref{B1}, we have
\besn{\label{B2}
&\sum_{i<j}\sum_{k<l:\atop k,l\neq i,j}(\widetilde{Y}_{ik}\widetilde{Y}_{il}+\widetilde{Y}_{jk}\widetilde{Y}_{jl})\E[\widetilde{Y}_{ij}^{\prime}|G]\\
=&\sum_{i<j}\sum_{k<l:\atop k,l\neq i,j}(\widetilde{Y}_{ik}\widetilde{Y}_{il}+\widetilde{Y}_{jk}\widetilde{Y}_{jl})\bigg(p(1-p)\partial_{ij}\bar{H}+O(1)(\partial_{ij}\bar{H})^2+O(1)(\partial_{ij}\bar{H})^3 \bigg)+O_{r}(n^2),
}
where we used \cref{eqmu} and \cref{GN} to absorb the term involving $p-\t p=O(1/n)$ into $O_{r}(n^2)$.

Recall the definition of $\partial_{ij} \bar{H}$ in \cref{eq:pijbH} and its expansion in \cref{eq:partialABC}. Collecting terms according to various graphs they produce (whenever we encounter $\t Y_{ik}^2$, we use the identity \cref{star}), we obtain 
 \begin{align}\label{B3}
    &\sum_{i<j}\sum_{k<l:\atop k,l\neq i,j}(\widetilde{Y}_{ik}\widetilde{Y}_{il}+\widetilde{Y}_{jk}\widetilde{Y}_{jl})\bigg(p(1-p)\partial_{ij}\bar{H}+O(1)(\partial_{ij}\bar{H})^2+O(1)(\partial_{ij}\bar{H})^3 \bigg) \notag\\
    =&6\sum_{r=2}^m(\beta_rs_r)\t p^{e_l-1}(1-\t p)\widetilde{\tstar}+O(1)\t V+O(1)\t \triangle +O(n)\t E\notag\\
    &+\sum_{K: v(K)=4}\frac{O(1)}{n}\t K+\sum_{K: v(K)\geq 5}\frac{O(1)}{n^{v(K)-4}}\t K+O(n^2).
 \end{align}
 %where $ \t \sqcap=\sum_{i<j<k<l}\t Y_{ij}\t Y_{jk}\t Y_{kl}$. 
 In particular, the coefficient in front of $\t \tstar$ comes from the first term in the numerator of $\partial_{ij}\bar{H}_A$ in \cref{Ha}. Also, there is an upper bound for $v(K)$ in the last sum that is independent of $n$.
 %From the covariance $\operatorname{Cov}(Y_{ij},\t Y_{jk}\t Y_{kl})\leq \operatorname{Cov}(Y_{ij},\t Y_{jk})+\operatorname{Cov}(Y_{ij},\t Y_{kl})\leq Cn^{-1}$, 
 %From \cref{lemma:expectation of K}, we have $\IE  \t \sqcap\leq Cn^{5/2}$. 
 
 From exchangeability, we have $ \IE(\widetilde{\tstar}^{\prime}-\widetilde{\tstar})=0$. 
 From the discussion on the denominator of $\sigma_{V_n}^2$ in \cref{eq:sigv}, we know $1-2\sum_{l=2}^m\beta_{ln}s_l\t  p_n^{e_l-1}\t q_n$ is positive and bounded away from 0.
 Taking expectation on both sides of \cref{B1}, 
 we obtain
 \be{
 \E \t \tstar=O(1)\E \t V+O(1)\E \t \triangle+ \sum_{K: v(K)=4}\frac{O(1)}{n}\E \t K+\sum_{K: v(K)\geq 5}\frac{O(1)}{n^{v(K)-4}}\E \t K+O(n^2).
 }
Using the previously established results that $\E\t K=O(n^2)$ when $v(K)=3$ (cf. \cref{cov1}), $\E \t K=O(n^{5/2})$ when $v(K)=4$ and $\E \t K=O(n^{v-2})$ when $v\geq 5$ (cf. \cref{lemma:expectation of K}), we obtain the desired result $\E \t\tstar=O(n^2)$.
\end{proof}

\begin{proof}[Proof of Lemma \ref{LI}]
The argument is similar as for \cref{eq:10007}.
   Let
$\mathcal{V}(K)=\{1,2, \ldots ,v\}$.  By the definition of $\t K$, 
     \begin{align*}
      \widetilde{K}=\frac{1}{\text{Aut}(K)} \sum_{ \substack{i_{1}, \ldots ,i_v=1 \\ i_{1},       \ldots ,i_v  \text{ are distinct} } }^{ n } \prod_{(s,t)\in \mathcal{E}(K)}
      \widetilde{Y}_{i_s,i_t}. 
    \end{align*}
    Following Step 1 in the proof of \cref{LH}, we can decompose $\widetilde{K}-\IE
    \widetilde{K}$ as
    \begin{align*}
     \text{Aut}(K)  (\widetilde{K}-\IE \widetilde{K})= & \sum_{ \substack{i_{1}, \ldots ,i_v=1 \\ i_{1},       \ldots ,i_v  \text{ are distinct} }  }^{ n } \big(\prod_{(s,t)\in \mathcal{E}(K)}
      \widetilde{Y}_{i_s,i_t}  -\textit{mean}\big)\\
      = & f_{K}+O(n^{-1})\sum_{ s=2 }^{ e(K)-1 } \sum_{ e_{1}, \ldots ,e_{s}\in \mathcal{E}(K) }^{ }
      f_{K\backslash \{ e_{1}, \ldots ,e_{s} \}} ,
      %\left(\E \prod_{(u,v) \in \{ e_1, \ldots ,e_s \}} \widetilde{Y}_{u,v}+O(n^{-2})\right), 
      % =& \sum_{ I_{1}, \ldots ,i_5=1 \text{ and }  I_{1},
      % \ldots ,i_5  \text{ are distinct}  }^{ n } \big(1+O(n^{-1})\big)g_E
    \end{align*}
    where the $f$'s are of the form \cref{hoff} and $K\backslash \{ e_{1}, \ldots ,e_{s} \}$ is the graph derived by
    deleting the edges $\{ e_1 ,\ldots ,e_s \}$ (but keeping all the vertices) from $K$. 
    %$K\backslash \{ e_{1},\ldots ,e_{s} \}$ may not be connected and may have many isolate vertices.
    From \cref{highorder}, we obtain that if $K\backslash \{ e_{1}, \ldots ,e_{s} \}$ has $v_{\rm{iso}}(K\backslash \{ e_{1}, \ldots ,e_{s} \})$ isolated vertices, then
    \begin{align*}
      f_{K\backslash \{ e_{1}, \ldots ,e_{s} \}}=O_{r} ( n^{\frac{v(K)+v_{\rm{iso}}(K\backslash \{ e_{1}, \ldots ,e_{s} \})}{2}}).  
    \end{align*}
    Thus,     
 \begin{align*}
   %\label{eq:10009}
  \widetilde{K}-\E \widetilde{K}=  & O_{r}(n^{v/2})+O(n^{-1})\sum_{ s=2 }^{ e(K)-1 } \sum_{ e_{1}, \ldots ,e_{s}\in \mathcal{E}(K) }^{ }
       n^{\frac{v(K)+v_{\rm{iso}}(K\backslash \{ e_{1}, \ldots ,e_{s} \})}{2}}.
\end{align*}
    % In fact, the graph
    % generated by $\{ e_1,e_{2},
    % \ldots ,e_{s}\}$ will have at least $v_{\rm{iso}}(K\backslash \{ e_{1}, \ldots
  % ,e_{s} \})+1$ vertices. 
  Since $v_{\rm{iso}}(K\backslash \{ e_{1}, \ldots
  ,e_{s} \})\leq v(K)-2$ when $s\leq e(K)-1$ (at least one edge left), we obtain the desired estimate $\widetilde{K}-\E \widetilde{K}=O_{r}(n^{v/2}+n^{v-2})$.
  %Substitutuing \eqref{eq:10008} into \eqref{eq:10009} complete the proof.
\end{proof}

\section*{Acknowledgements}

We thank Yi-Kun Zhao for pointing us to the reference \cite{gotze_higher_2019}.
Fang X. was partially supported by Hong Kong RGC GRF 14305821, 14304822, 14303423, 14302124 and a CUHK direct grant. Liu S.H. was partially supported by National Nature Science Foundation of China (Grant No. 12301182).  Su Z. and Wang X. were partially supported by National Nature Science Foundation of China (Grant Nos. 12271475 and U23A2064).

\bibliographystyle{apalike}
%\bibliography{reference}

%\begin{thebibliography}{22}
%\providecommand{\natexlab}[1]{#1}
%\providecommand{\url}[1]{\texttt{#1}}
%\expandafter\ifx\csname urlstyle\endcsname\relax
%  \providecommand{\doi}[1]{doi: #1}\else
%  \providecommand{\doi}{doi: \begingroup \urlstyle{rm}\Url}\fi

%\bibitem{fang2024}
%Fang X, Liu S H, Shao Q M. Normal approximation for exponential random graphs[J]. arXiv preprint arXiv:2404.01666, 2024.

%\bibitem{rollin2015}Röllin A, Ross N. Local limit theorems via Landau–Kolmogorov inequalities[J]. 2015.

%\bibitem{reinert2019}Reinert G, Ross N. Approximating stationary distributions of fast mixing Glauber dynamics, with applications to exponential random graphs[J]. 2019.

%\bibitem{newman1980}Newman C M. Normal fluctuations and the FKG inequalities[J]. Communications in Mathematical Physics, 1980, 74(2): 119-128.
%\bibitem{ganguly2019} Ganguly S, Nam K. Sub-critical Exponential random graphs: Concentration of measure and some applications[J]. arXiv preprint arXiv:1909.11080, 2019.

%\bibitem{dey2023}
%Dey P S, Terlov G. Stein’s method for conditional central limit theorem[J]. The Annals of Probability, 2023, 51(2): 723-773.

%\bibitem{Sam2020} Sambale H, Sinulis A .Logarithmic Sobolev inequalities for finite spin systems and applications[J].Bernoulli, 2020(3): 1863-1890.

\appendix
\section{Computations for \cref{conj1}}\label{sec:multi}

\textbf{Step 1.}
We first sketch a computation leading to a multivariate conditional CLT for the joint distribution of centered triangle and two star counts given the number of edges. As by-products, 
we also get the mean and variance of conditional triangle counts.
We use the same notation as in the proof of \cref{thm:twostar}.

Recall that $\t \triangle =\sum_{i<j<k}\widetilde{Y}_{ij}\widetilde{Y}_{jk}\widetilde{E}_{kl}$ and let $\widetilde{\boldsymbol{W}}=(\t \triangle-\IE \t \triangle  ,\widetilde{V}-\IE \widetilde{V})^{T}.$ Using the same method as that in obtaining the linearity condition for two stars, we obtain (recall $\Delta (\t \triangle)=\t \triangle'-\t \triangle$)

\begin{align*}
 &\IE(\Delta (\t \triangle) \mathbbm{1}_{\Delta\widetilde{E}=1}\mid \t \triangle ,\widetilde{V},\widetilde{E})\\
 =&\frac{1}{N}\IE\left[\sum_{i<j}(\widetilde{q}-\td{Y}_{ij})Y_{ij}^\prime\sum_{k\neq i,j}(\td{Y}_{ik}\td{Y}_{jk})\mid\td{V},\td{E}\right]\\
    =&\frac{1}{N}\IE\left[\sum_{i<j}(\widetilde{q}-\td{Y}_{ij})\sum_{k\neq i,j}(\td{Y}_{ik}\td{Y}_{jk})\frac{e^{\partial_{ij}H(G)}}{1+e^{\partial_{ij}H(G)}}\mid \td{V},\td{E}\right]\\
    =&\frac{1}{N}\IE\bigg[\sum_{i<j}(\widetilde{q}-\td{Y}_{ij})\sum_{k\neq i,j}(\td{Y}_{ik}\td{Y}_{jk})\left(p+p(1-p)\partial_{ij} \bar{H}+O(\partial_{ij} \bar{H})^2\right)\mid \td{V},\td{E}\bigg].
\end{align*}
Considering $\ptdH$ as $\ptdH_A+(\ptdH-\ptdH_A)$ (defined in \cref{Ha}) yields 
\begin{align*}
     \IE(\Delta (\t \triangle) \mathbbm{1}_{\Delta\widetilde{E}=1}\mid \t \triangle ,\widetilde{V},\widetilde{E})=-\frac{3\widetilde{p}}{N}\t \triangle +\frac{\widetilde{p}\widetilde{q}}{N}\widetilde{V}+\frac{6N\t q^4\sum_{l=1}^m\beta_l t_l\t p^{e_l}}{N}+R^{\prime}_{1,+}.
\end{align*}
We also have
\begin{align*}
    \IE(\Delta (\t \triangle) \mathbbm{1}_{\Delta\widetilde{E}=-1}\mid \t \triangle ,\widetilde{V},\widetilde{E})= -\frac{3\widetilde{q}}{N}\t \triangle -\frac{\widetilde{p}\widetilde{q}}{N}\widetilde{V}+\frac{6N\t q^3\sum_{l=1}^m\beta_l t_l\t p^{e_l+1}}{N}+R^{\prime}_{1,-}.
\end{align*}
Combining these two equalities with \cref{P2} implies
%\begin{align*}
    %M_{1,+}(\widetilde{\boldsymbol{W}},\widetilde{E})+ M_{1,-}(\widetilde{\boldsymbol{W}},\widetilde{E}) = -\frac{1}{N} \left( \begin{pmatrix} 3 & 0 \\ 0 & \lambda \end{pmatrix} (\widetilde{\boldsymbol{W}}-\IE\widetilde{\boldsymbol{W}})+\begin{pmatrix} R_{1,+}+ R_{1,-} \\ R^{\prime}_{1,+}+R^{\prime}_{1,-} \end{pmatrix} \right),
%\end{align*}
\begin{align*}
    M_{1,+}(\widetilde{\boldsymbol{W}},\widetilde{E})+ M_{1,-}(\widetilde{\boldsymbol{W}},\widetilde{E}) = - \left( \begin{pmatrix} 3/N & 0 \\ 0 & \lambda \end{pmatrix} (\widetilde{\boldsymbol{W}}-\IE\widetilde{\boldsymbol{W}})+\begin{pmatrix} R^{\prime}_{1,+}+ R^{\prime}_{1,-} \\ \lambda R_{1,+}+\lambda R_{1,-} \end{pmatrix} \right),
\end{align*}
where
\begin{align*}
    \IE\widetilde{\boldsymbol{W}}=(2N\t q^3\sum_{l=1}^m\beta_l t_l\t p^{e_l}+O(n),\IE\widetilde{V})^T,\quad\text{and}\quad \left|\begin{pmatrix} N(R_{1,+}+ R_{1,-}) \\ R^{\prime}_{1,+}+R^{\prime}_{1,-} \end{pmatrix}\right|=O_{r}(n).
\end{align*}
Hence we get
\begin{align}\label{IET}
  \E \t \triangle= 2N\t q^3\sum_{l=1}^m\beta_l t_l\t p^{e_l}+O(n).
\end{align}
Thus we set 
\begin{align}\label{IET'}
 \mu_{\t\triangle}= 2N\t q^3\sum_{l=1}^m\beta_l t_l\t p^{e_l}.
\end{align}
For $M_{2,\pm}((\widetilde{\boldsymbol{W}},\widetilde{E}))$, we similarly get
%\begin{align}\label{covst}
%\IE \left( \left( \begin{array}{cc} 
%\Delta (\t \triangle^2) & \Delta (\t \triangle) \Delta \widetilde{V} \\ 
%\Delta (\t \triangle) \Delta \widetilde{V} & \Delta \widetilde{V}^2 
%\end{array} \right) \mathbbm{1}_{\Delta Y=\pm 1} \Big|\widetilde{\boldsymbol{W}},\widetilde{E}\right) = \frac{1}{N} \left( \left( \begin{array}{cc} 
%3 & 0 \\ 
%0 & \lambda 
%\end{array} \right) \sigma^2_{\boldsymbol{W}} + \Gamma_{2, \pm} \right),
%\end{align}
\begin{align}\label{covst}
\IE \left( \left( \begin{array}{cc} 
\Delta (\t \triangle^2) & \Delta (\t \triangle) \Delta \widetilde{V} \\ 
\Delta (\t \triangle) \Delta \widetilde{V} & \Delta \widetilde{V}^2 
\end{array} \right) \mathbbm{1}_{\Delta Y=\pm 1} \Big|\widetilde{\boldsymbol{W}},\widetilde{E}\right) =  \left( \left( \begin{array}{cc} 
3/N & 0 \\ 
0 & \lambda 
\end{array} \right) \sigma^2_{\boldsymbol{W}} + \Gamma_{2, \pm} \right),
\end{align}
where 
\begin{align}\label{sigmat}
\sigma^2_{\widetilde{\boldsymbol{W}}}=\left( \begin{array}{cc} 
\sigma_{\t \triangle}^2 & 0 \\ 
0 & \sigma_{V}^2 
\end{array} \right),\quad\sigma_{\t \triangle}^2=\frac{Nn\t p^3 \t q^3}{3}\quad\text{and}\quad  |\Gamma_{2,\pm}|=O_{r}(n^{5/2}).
\end{align}
From \eqref{covst} and \eqref{sigmat}, we have
that
\begin{align}\label{covtvtt}
    \operatorname{Cov}(\t V,\t \triangle)=O(n^{5/2}).
\end{align}
Conditioning on $E_n/N=\t p$, our computations above suggest 
\begin{align}\label{muclt}
   \sigma_{\widetilde{\boldsymbol{W}}} ^{-1}\widetilde{\boldsymbol{W}}\longrightarrow N(\boldsymbol{0},I_2) \quad \text{in distribution,}
\end{align}
where $N(\boldsymbol{0},I_2)$ is the two-dimensional standard normal distribution.
%\begin{proof}[Proof of the \cref{conj1}]

\medskip

\textbf{Step 2.}
In this step, we show that the conditional CLT for general subgraph counts reduces to that of a linear combination of $\t \triangle$ and $\t V$.

From \eqref{hof}, conditioning on $E_n/N=\t p$ and denoting $(n)_{(k)}:=n!/(n-k)!$, we have
\begin{align}\label{hoff:H}
  F_{n}-\IE F_{n} =&\sum_{K\subset H,e(K)\geq 2}\frac{(n-v(K))_{(v(H)-v(K))}\cdot \operatorname{Aut}(K) }{\operatorname{Aut}(H)}\big(\t p^{e(H)-e(K)}\big)(\t K-\IE \t K )\notag\\
   =&\left(\frac{(n-3)_{(v(H)-3)}}{\operatorname{Aut}(H)}\right)\big(2s\t p^{e(H)-2} (\t V-\t \E V)+6t\t p^{e(H)-3} (\t \triangle-\E\t \triangle)\big)\notag\\
   &+O_{r}(n^{v(H)-2+\varepsilon}),
   \end{align}
   where we used \cref{LI}, \cref{eq:cond1,eq:cond2} in the second equality for the terms in the summation with $v(K)\geq 4$.

Thus, combining \cref{eq:sigv,sigmat}, we have 
\begin{align}\label{hoff:H1}
  &\frac{1}{\sigma_F}\left(F_{n}-\IE F_{n} -\left(\frac{(n-3)_{(v(H)-3)}}{\operatorname{Aut}(H)}\right)\big(2s\t p^{e(H)-2} (\t V- \E\t V)+6t\t p^{e(H)-3} (\t \triangle-\E\t \triangle)\big)\right)\notag\\
   =&O_{r}(n^{-\frac{1}{2}+\varepsilon}).
   \end{align} 
Therefore, by the conditional joint CLT for two-star and triangle counts obtained in Step 1, it suffices to verify that 
\begin{align}
    \label{sigmaF}
    \sigma_F^2=&(1+o(1))\Var\bigg(\Big(\frac{(n-3)_{(v(H)-3)}}{\operatorname{Aut}(H)}\Big)\big(2s\t p^{e(H)-2} (\t V- \E\t V)\notag\\
   &\qquad\qquad\qquad\qquad\qquad\qquad\qquad+6t\t p^{e(H)-3} (\t \triangle-\E\t \triangle)\big)\mid \frac{E_n}{N}=\t p\bigg)
\end{align}
 and 
 \begin{equation}
     \label{muF}
     \E F_n-\mu_F=o(\sigma_F).
 \end{equation}
 By \eqref{covtvtt}, 
\begin{align*}
    &\text{R.H.S of  \cref{sigmaF}}\\=&\left(\frac{(n-3)_{(v(H)-3)}}{\operatorname{Aut}(H)}\right)^2\left(4s^2\t p^{2e(H)-4}\sigma^{2}_{\t V} +36t^2\t p^{2e(H)-6}\sigma^{2}_{\t \triangle }\right)\\
    &+O(n^{2v(H)-6})\operatorname{Cov}(\t V,\t \triangle\mid \frac{E_n}{N}=\t p)\\
    =&\left(\frac{(n-3)_{(v(H)-3)}}{\operatorname{Aut}(H)}\right)^2\left(4s^2\t p^{2e(H)-4}\sigma^2_{\t V} +36t^2\t p^{2e(H)-6}\sigma^2_{\t \triangle }\right)+O(n^{2v(H)-3.5+\varepsilon})
    \\=&(1+o(1))\sigma_F^2.
\end{align*}
Let 
\be{
G(K,H)= \frac{(n-v(K))_{(v(H)-v(K))}\cdot \operatorname{Aut}(K) }{\operatorname{Aut}(H)}\big(\t p^{e(H)-e(K)}\big).
}
By expanding the edge indicator $Y_{e}$ in $F_{n}$ as $\t Y_{e}+\t p$ and using \cref{lemmaB3}, we have
\begin{align*}
   &\IE [F_{n}\mid \frac{E_n}{N}=\t p]\\=&\frac{n_{(v(H))} }{\operatorname{Aut}(H)}\t p^{e(H)}+ \sum_{K\subset F,e(K)\geq 2} G(K,H)\IE \t K\\
   =&\frac{n_{(v(H))} }{\operatorname{Aut}(H)}\t p^{e(H)}+ \left(\frac{(n-3)_{(v(H)-3)}}{\operatorname{Aut}(H)}\right)\big(2s\t p^{e(H)-2}\IE [\t V\mid \frac{E_n}{N}=\t p]+6t\t p^{e(H)-3}\IE [\t \triangle\mid \frac{E_n}{N}=\t p]\big)\\&\quad+O(n^{v(H)-2+\varepsilon})\\
   =&\frac{n_{(v(H))} }{\operatorname{Aut}(H)}\t p^{e(H)}+ \left(\frac{(n-3)_{(v(H)-3)}}{\operatorname{Aut}(H)}\right)\big(2s\t p^{e(H)-2}\mu_{\t V}+6t\t p^{e(H)-3}\mu_{\t \triangle}\big)+O(n^{v(H)-2+\varepsilon}).
\end{align*}
Hence 
\begin{align*}
    \frac{{\E}F_n}{\sigma_{ F}}=\frac{\mu_{ F}}{\sigma_{ F}}+O(n^{-\frac{1}{2}+\varepsilon}),
\end{align*}
which completes the proof of \cref{muF}.

\section{Computing $c^*$}\label{append}
In this appendix, we give the explicit expression of $c^*$ mentioned in \cref{rem:cstar}.
\begin{lemma}\label{lemmacstar}
Denote the edge set of $H_l$, $1\leq l \leq m$, by $k_{l_1},\dots, k_{l_{e_l}}$.
    Denote by $s(H_l\backslash k_{l_j})$ and $t(H_l\backslash k_{l_j})$ the numbers of two stars and triangles in graph $H_l\backslash k_{l_j}$, respectively. We have $|\widetilde{p}-p-c^* n^{-1}|\leq C n^{-3/2}$ with
\begin{align*}
       c^*=\frac{p(1-p)}{1-\varphi^{\prime}(p)}&\bigg[\sum_{l=1}^m\beta_l\sum_{j=1}^{e_l}\left(4s(H_l\backslash k_{l_j})p^{e_l-3}\frac{q^2\sum_{r=2}^m\beta_rs_rp^{e_r}}{1-2\sum_{r=2}^m\beta_rs_rp^{e_r-1}q}\right.\\
       &\left.\quad\quad\quad\quad\quad\quad+12t(H_l\backslash k_{l_j})p^{e_l-4}q^3\sum_{r=2}^m\beta_rs_rp^{e_r}\right)\\
        &+(1-2p)\left( \frac{8(\sum_{l=2}^m \beta_ls_l p^{e_l-2})^2pq}{(1-2\sum_{l=2}^m\beta_ls_lp^{e_l-1}q)}+36\big(\sum_{l=2}^m \beta_lt_l p^{e_l-2}\big)^2q^2\right)\\
        &-\sum_{l=2}^m\beta_le_lp^{e_l-1}\left(v_l-2\right)\left(v_l-3\right)\bigg].
    \end{align*}
    
\end{lemma}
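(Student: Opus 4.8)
The plan is to sharpen the Stein-method computation underlying \cref{L2} by carrying the Taylor expansion one order further and tracking every contribution down to the $O(n)$ term. The starting point is the exact identity established there, namely
\be{
N(\t p-p)=\IE h(Y)-\IE h(X)=\sum_{s\in\mathcal{I}}\IE\left[\frac{e^{\partial_s H(Y)}}{1+e^{\partial_s H(Y)}}-p\right]
}
(see \cref{eq:h-h}); no new Stein machinery is required. Writing $\ell(x)=e^x/(1+e^x)$ and using $\ell(2\Phi_\beta(p))=p$, $\ell'(2\Phi_\beta(p))=p(1-p)$ and $\ell''(2\Phi_\beta(p))=p(1-p)(1-2p)$, I would expand
\be{
\ell(\partial_s H)-p=p(1-p)\,\partial_s\bar{H}+\tfrac12 p(1-p)(1-2p)(\partial_s\bar{H})^2+O\big((\partial_s\bar{H})^3\big),
}
with $\partial_s\bar{H}=\partial_s H-2\Phi_\beta(p)$. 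By \cref{LH} we have $\partial_s\bar{H}=O_r(n^{-1/2})$, so the cubic remainder, summed over the $N=\Theta(n^2)$ edges, contributes only $O(Nn^{-3/2})=O(n^{1/2})$ to the right-hand side, i.e.\ $O(n^{-3/2})$ to $\t p-p$, matching the target accuracy. Thus it suffices to evaluate $\sum_{s}\IE(\partial_s\bar{H})$ and $\sum_{s}\IE(\partial_s\bar{H})^2$ up to additive errors $O(n^{1/2})$.

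The prefactor $p(1-p)/(1-\varphi'_\beta(p))$ arises from a self-consistency step in the first sum. Splitting
\be{
\IE\partial_{ij}\bar{H}=\sum_{l=1}^m\beta_l\big[2e_l(\t p^{e_l-1}-p^{e_l-1})+n^{2-v_l}\rho_{ij,l}\big],
}
where $\rho_{ij,l}:=\IE\Hom(H_l,G,(i,j))-2n^{v_l-2}e_l\t p^{e_l-1}$ is the excess homomorphism count, the mean-field difference $\t p^{e_l-1}-p^{e_l-1}=(e_l-1)p^{e_l-2}(\t p-p)+O(n^{-2})$ (legitimate by \cref{L2}), once multiplied by $p(1-p)$ and summed over edges, produces exactly $N\varphi'_\beta(p)(\t p-p)$ via $\varphi'_\beta(p)=2p(1-p)\Phi'_\beta(p)$. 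Moving this term to the left yields $N(1-\varphi'_\beta(p))(\t p-p)$, which is where the denominator $1-\varphi'_\beta(p)$ comes from, and reduces the task to computing
\be{
p(1-p)\sum_{l=1}^m\beta_l n^{2-v_l}\sum_s\rho_{s,l}\quad\text{and}\quad\tfrac12 p(1-p)(1-2p)\sum_s\IE(\partial_s\bar{H})^2
}
to order $n$.

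The three groups in $c^*$ then correspond to three separate sources. The quadratic sum gives the $(1-2p)$-group: replacing $\partial_s\bar{H}$ by its dominant piece $\partial_{ij}\bar{H}_A$ of \cref{Ha} (the $B,C$ pieces being lower order by \cref{LH}), expanding the square and summing over $s$ produces the two-star and triangle contributions $8(\sum_{l=2}^m\beta_l s_l p^{e_l-2})^2pq$ and $36(\sum_{l=2}^m\beta_l t_l p^{e_l-2})^2q^2$, with the denominator $1-2\sum_{l=2}^m\beta_l s_l p^{e_l-1}q$ entering through the off-diagonal two-star terms, which must be resolved via the conditional two-star scale encoded in $\mu_{\t V}$ and $\sigma_V$ (\cref{muv,m2}). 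The excess-count sum $\sum_s\rho_{s,l}$ splits further: writing each retained edge indicator as $\t Y+\t p$ and collecting the leading covariances, for each $l$ and each removed edge $k_{l_j}$ the pairs and triples of edges of $H_l\backslash k_{l_j}$ forming a two-star or a triangle contribute through the localized two-star and triangle expectations, bringing in the counts $s(H_l\backslash k_{l_j})$ and $t(H_l\backslash k_{l_j})$ together with the factors built from $\mu_{\t V}$ and $\mu_{\t\triangle}$ (\cref{muv,IET'}) — this is the first group. Finally, the difference between the true number of injective vertex extensions, $(n-2)_{(v_l-2)}$, and the mean-field count $n^{v_l-2}$ implicit in $\rho$ contributes a purely combinatorial term of order $n^{v_l-3}$ per edge; after combining with the coincidence corrections generated in the covariance expansion, this yields the net factor $-\sum_{l=2}^m\beta_l e_l p^{e_l-1}(v_l-2)(v_l-3)$. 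Throughout, the order estimates of \cref{lemmaB3,LI} are used to discard every subgraph count that does not reach order $n$.

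The main obstacle is precisely this bookkeeping. In \cref{L2} a crude first-order bound sufficed, whereas here each of the finitely many subgraph configurations that contribute at the subleading order $n$ must be enumerated and its coefficient computed exactly, with the relative error held below $O(n^{-1/2})$; separating the genuine $O(n)$ contributions from the numerous $O(n^{1/2})$ and smaller remainders is delicate. A further, less obvious difficulty is that several of these coefficients are not self-contained: they require the asymptotic two-star and triangle expectations $\mu_{\t V}$, $\mu_{\t\triangle}$ derived elsewhere in this paper, which is the unexpected dependence flagged in \cref{rem:cstar}. Once these inputs and the order bounds of \cref{LH,lemmaB3,LI} are in place, assembling the three groups into the stated $c^*$ is a careful but routine calculation.
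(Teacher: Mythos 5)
Your proposal follows essentially the same route as the paper's computation in the appendix: the same second-order Taylor expansion of $e^x/(1+e^x)$ applied to the Stein identity \cref{eq:h-h}, the same self-consistency step extracting $N\varphi'_\beta(p)(\widetilde p-p)$ to produce the denominator $1-\varphi'_\beta(p)$, and the same identification of the three groups of terms (the $s(H_l\backslash k_{l_j})$, $t(H_l\backslash k_{l_j})$ contributions via $\mu_{\widetilde V}$ and $\mu_{\widetilde\triangle}$, the $(1-2p)\Var(\partial_s H)$ group via $\partial_{ij}\bar H_A$, and the falling-factorial correction giving $(v_l-2)(v_l-3)$). The only slight imprecision is attributing part of the $(v_l-2)(v_l-3)$ term to covariance coincidence corrections, whereas it comes entirely from the ratio $n(n-1)\cdots(n-v_l+3)/n^{v_l-2}$; this does not affect the correctness of the approach.
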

\begin{proof}[Proof of \cref{lemmacstar}]
From \cref{eq:h-h}, Taylor's expansion and \cref{r}, we have
\begin{align*}
    N\widetilde{p}-Np=&\sum_{s\in \mathcal{I}}\IE(\partial_{s}\bar{H}(Y))p(1-p)+\frac{1}{2}\sum_{s\in \mathcal{I}}\IE(\partial_{s}\bar{H}(Y))^2(p-3p^2+2p^3)\notag\\
    &+O(\sum_{s\in\mathcal{I}}\IE|\partial_{s}\bar{H}(Y)|^3)\\
    &=:I_{1}+I_{2}+I_{3}.
\end{align*}
Firstly, from \cref{eq:partialABC} and \cref{LH}, we have 
\begin{align}\label{OpbarH}
    \partial_{s}\bar{H}(Y)=O_{r}(n^{-1/2}).
\end{align} Thus,
\begin{align*}
    I_{3}\leq CN\cdot n^{-3/2}=O(n^{1/2}).
\end{align*} 
Secondly, by \cref{meandiffbar,OpbarH},
 we have
\begin{align*}
    I_{2}=&\frac{p(p-1)(2p-1)}{2}\sum_{s\in \mathcal{I}}\IE[(\partial_s H(Y)-\E \partial_s H(Y))+(\E \partial_s H(Y)-2\Phi_{\beta}(p))]^2\\
   =& \frac{p(p-1)(2p-1)}{2}\sum_{s\in \mathcal{I}}\operatorname{Var}\left(\partial_{s}H(Y)\right)+O(n^{1/2}).
\end{align*}
Thirdly, using 
Taylor’s expansion for $\Phi_{\beta}(x)$ around $x=p$, $\t p-p=O(1/n)$ and $\varphi'(p)=2p(1-p)\Phi_\beta'(p)$,
\begin{align*}
    I_{1}=& p(1-p)\sum_{s\in \mathcal{I}}\IE\partial_{s}\bar{H}(Y)\\
    =&Np(1-p)(\E\partial_s H(Y)-2\Phi_{\beta}(\widetilde{p}))+2Np(1-p)(\widetilde{p}-p)\Phi_{\beta}^{\prime}(p)+O(1)\\
    =&Np(1-p)(\E \partial_s H(Y)-2\Phi_{\beta}(\widetilde{p}))+N(\widetilde{p}-p)\varphi^{\prime}(p)+O(1).
\end{align*}
Now we get 
\begin{align}\label{ac1}
(1-\varphi^{\prime}(p))(\widetilde{p}-p)=&(\E \partial_s H(Y)-2\Phi_{\beta}(\widetilde{p}))p(1-p)+\frac{p(p-1)(2p-1)}{2}\operatorname{Var}\left(\partial_{s}H(x)\right)\notag\\
&+O(n^{-3/2}).
\end{align}
For $H_l$ consisted of the edges $k_1,\cdots,k_{e_l}$, we have
\begin{align*}
  \IE(\partial_{s}H(x)) =& 2\sum_{l=1}^m\frac{n(n-1)\cdots(n-v_l+3))}{n^{v_l-2}}\beta_l\sum_{j=1}^{e_l} \IE(Y_{k_1}\cdots Y_{k_{j-1}} Y_{k_{j+1}}\cdots Y_{k_{e_l}}),
\end{align*}
thus, 
\begin{align}\label{ac2}
   & \E \partial_s H(Y)-2\Phi_{\beta}(\widetilde{p})\notag\\=&2\sum_{l=1}^m\beta_l\sum_{j=1}^{e_l} \IE\left[Y_{k_1}\cdots Y_{k_{j-1}} Y_{k_{j+1}}\cdots Y_{k_{e_l}}-\widetilde{p}^{e_l-1}\right]\notag\\
    &+2\sum_{l=1}^m\beta_l\sum_{j=1}^{e_l} \left(\frac{n(n-1)\cdots(n-v_l+3))}{n^{v_l-2}}-1\right)\IE(Y_{k_1}\cdots Y_{k_{j-1}} Y_{k_{j+1}}\cdots Y_{k_{e_l}})\notag\\
=&2\sum_{l=1}^m\beta_l\sum_{j=1}^{e_l} \IE\left[Y_{k_1}\cdots Y_{k_{j-1}} Y_{k_{j+1}}\cdots Y_{k_{e_l}}-\widetilde{p}^{e_l-1}\right]\notag\\
    &+2\sum_{l=1}^m\beta_l\sum_{j=1}^{e_l} \left(\frac{n(n-1)\cdots(n-v_l+3))}{n^{v_l-2}}-1\right)\left(\IE(Y_{k_1}\cdots Y_{k_{j-1}} Y_{k_{j+1}}\cdots Y_{k_{e_l}})-\t p^{e_l-1}\right)\notag\\
    &+2\sum_{l=1}^m\beta_l\sum_{j=1}^{e_l} \left(\frac{n(n-1)\cdots(n-v_l+3))}{n^{v_l-2}}-1\right)\t p^{e_l-1}\notag\\
=&2\sum_{l=1}^m(\beta_l+O(n^{-1}))\sum_{j=1}^{e_l} \IE\left[(Y_{k_1}\cdots Y_{k_{j-1}} Y_{k_{j+1}}\cdots Y_{k_{e_l}})-\widetilde{p}^{e_l-1}\right]\notag\\
    &-\sum_{l=1}^m\beta_le_l\t p^{e^l-1}\left(v_l-2\right)\left(v_l-3\right)n^{-1}+O(n^{-2}).
\end{align}
From \cref{lemmaB3}, we know that only if a subset of ${k_1}\cdots {k_{j-1}}, {k_{j+1}}\cdots {k_{e_l}}$ forms a two star or a triangle, the contribution to 
\begin{align*}
  &\IE(Y_{k_1}\cdots Y_{k_{j-1}} Y_{k_{j+1}}\cdots Y_{k_{e_l}})-\t p^{e_l-1}\\
  =&\IE\left[(\widetilde{Y}_{k_1}+\widetilde{p})\cdots (\widetilde{Y}_{k_{j-1}}+\widetilde{p})(\widetilde{Y}_{k_{j+1}}+\widetilde{p})\cdots (\widetilde{Y}_{k_{e_l}}+\widetilde{p})-\widetilde{p}^{e_l-1}\right]
\end{align*}
is of order $O(n^{-1})$, and the other cases have order $O(n^{-2})$. Therefore, from $|p-\t p|=O(n^{-1}),$ $\IE \t V-\mu_{\t V}=O(n)$, $\IE \t \triangle-\mu_{\t \triangle}=O(n)$, \cref{muv,IET}
\begin{align*}
     &\E \partial_s H(Y)-2\Phi_{\beta}(\widetilde{p})\\
     =&2\sum_{l=1}^m\beta_l\sum_{j=1}^{e_l}\left(s(H_l\backslash k_{j})\t p^{e_l-3}(N^{-1}(n-2)^{-1}\IE \widetilde{V})+t(H_l\backslash k_{j})\t p^{e_l-4}(3N^{-1}(n-2)^{-1}\IE \t \triangle )\right)\\
     &-\sum_{l=1}^m\beta_le_l\t p^{e^l-1}\left(v_l-2\right)\left(v_l-3\right)n^{-1}+O(n^{-2})\\
     =&n^{-1}\bigg[4\sum_{l=1}^m\beta_l\sum_{j=1}^{e_l}s(H_l\backslash k_{l_j})p^{e_l-3}\frac{q^2\sum_{r=2}^m\beta_rs_rp^{e_r}}{(1-2q\sum_{r=2}^m\beta_rs_rp^{e_r-1})}\\
     &+\sum_{l=1}^m\beta_l\sum_{j=1}^{e_l}12t(H_l\backslash k_{l_j})p^{e_l-4}q^3\sum_{r=2}^m\beta_rs_rp^{e_r}-\sum_{l=2}^m\beta_le_lp^{e_l-1}\left(v_l-2\right)\left(v_l-3\right)\bigg]\\
     &+O(n^{-2}).
\end{align*}
For $\operatorname{Var}\left(\partial_{s}H(x)\right)$ and $s=(i,j)$, we get from \cref{eq:partialABC}, \cref{Ha} and \cref{LH1} that
\begin{align}\label{ac3}
    \operatorname{Var}(\partial_s H(x))=&\IE\Big[\sum_{r\neq i,j}\sum_{l=2}^m\frac{2\beta_ls_l\widetilde{p}^{e_l-2}(\widetilde{Y}_{ir}+\widetilde{Y}_{jr})}{n}+\sum_{r\neq i,j}\sum_{l=2}^m\frac{6\beta_lt_l\widetilde{p}^{e_l-3}(\widetilde{Y}_{ir}\widetilde{Y}_{rj}-\textit{mean})}{n}\Big]^2\notag\\
    %&\quad+\sum_{v(K\cup s)\geq 3}\frac{C_{\beta,K}f_K}{n^{v(K\cup s)-2}}\Big]^2\notag\\
    &\quad+O(n^{-3/2})\notag\\
    =:&\IE(X_1+X_2)^2+O(n^{-3/2}).
\end{align}
From \cref{lemmaB3}, we have 
\begin{align*}
    &\IE X_1^2=\frac{8(\sum_{l=2}^m \beta_ls_l p^{e_l-2})^2}{n}\left(pq+\frac{\E \t V}{N}\right)+O(n^{-2})=\frac{8(\sum_{l=2}^m \beta_ls_l p^{e_l-2})^2pq}{(1-2\sum_{l=2}^m\beta_ls_lp^{e_l-1}q)n}+O(n^{-2}),\\
    &\IE X_2^2=\frac{36(\sum_{l=2}^m \beta_lt_l p^{e_l-2})^2q^2}{n}+O(n^{-2}),\quad \operatorname{Cov}(X_1,X_2)=O(n^{-3/2}).
\end{align*}
Combining \eqref{IEV}, \eqref{IET} with \eqref{ac1}-\eqref{ac3} , we complete the proof.
\end{proof}

\section{Proof of \cref{appc} using the Poincar\'e inequality}\label{appendc}

From \cite[Theorem~2.1]{ganguly2019} and bounding their transition probabilities $P(x, x^e)$ by 1, we have the following Poincar\'e inequality for the ERGM in the subcritical region:
\begin{equation}\label{eq:PoincareERGM}
    \|f(Y)-\E f(Y)\|_2^2\leq \sigma^2 \E |\mathfrak{h} f(Y)|^2,
\end{equation}
where $Y=\{Y_e, e\in \mathcal{I}\}$, $\mathcal{I}=\{(i,j): 1\leq i<j\leq n\}$, are the edge indicators of the ERGM, $f: \mathcal{I}\to \IR$ is any real-valued function, 
\begin{equation}\label{40001}
|\mathfrak{h} f|=(\sum_{e\in \mathcal{I}}( \nabla_e f)^2)^{1/2},\quad   \mathfrak{h}_e f(x)=f(x_{e+})-f(x_{e-}),
\end{equation}
and $\sigma^2$ is a constant depending only on the compact subset $B$ of the subcritical region.
In \cite{gotze_higher_2019}, it was shown that Poincar\'e inequality leads to higher-order concentration inequalities. Their result was stated for the usual gradient $\nabla$ in $\IR^N$. We modify their proof to obtain the following discrete analog.

To formulate our result, we introduce higher order differences $\mathfrak{h}_{i_1 \ldots i_d}$ for any $d \in \mathbb{N}$ by setting $\mathfrak{h}_{i_1 \ldots i_d} f=\mathfrak{h}_{i_1}\left(\mathfrak{h}_{i_2 \ldots i_d} f\right)$. In particular, we obtain tensors of $d$ th order differences $\mathfrak{h}^{(d)} f$ with coordinates $\mathfrak{h}_{i_1 \ldots i_d} f$. Regarding $\mathfrak{h}^{(d)} f$ as a vector indexed by $\mathcal{I}^d$, we may define $|\mathfrak{h}^{(d)} f|_{\rm{HS}}$ as its Euclidean norm. 
%We will write $\|f\|_p$ for the $L^p(\mu)$ norm of $f$ and $\left\|\mathfrak{h}^{(d)} f\right\|_p:=\left\|\left|\mathfrak{h}^{(d)} f\right|\right\|_p$.

\begin{lemma}\label{lem:GS19} 
From \cref{eq:PoincareERGM}, we have, for any function $g: \mathcal{I}\to \IR$,
 \begin{align*}
     \|g(Y)-\E g(Y)\|_r\leq& \sum_{k=1}^{d-1} (2r\sigma)^k\||\mathfrak{h}^{(k)} g(Y)|_{\rm{HS}}\|_2+(2r\sigma)^d\||\mathfrak{h}^{(d)} g(Y)|_{\rm{HS}}\|_r.
 \end{align*}
\end{lemma}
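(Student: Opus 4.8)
The plan is to transcribe the argument of \cite{gotze_higher_2019} into the present discrete setting, with the Euclidean gradient replaced everywhere by the difference operator $\mathfrak{h}$, and to carry it out in two stages. First I would upgrade the $L^2$ Poincar\'e inequality \eqref{eq:PoincareERGM} to the $L^r$ statement that for every $f$ and every $r\geq 2$,
$$\|f(Y)-\E f(Y)\|_r\leq 2r\sigma\,\||\mathfrak{h}f(Y)|\|_r,$$
the range $1\leq r<2$ then following from the monotonicity of $L^r$ norms. Writing $F=f-\E f$ and applying \eqref{eq:PoincareERGM} to $h=|F|^{r/2}$ gives $\|F\|_r^r-\|F\|_{r/2}^r=\Var(h)\leq\sigma^2\E|\mathfrak{h}h(Y)|^2$. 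Bounding $\E|\mathfrak{h}h|^2$ by $\tfrac{r^2}{4}\E[\,|F|^{r-2}|\mathfrak{h}f|^2]$ through a discrete mean-value surrogate for the chain rule and then by H\"older's inequality, and dividing by $\|F\|_r^{r-2}$ together with $\|F\|_{r/2}^r/\|F\|_r^{r-2}\leq\|F\|_{r/2}^2$, yields the dyadic recursion $\|F\|_r^2\leq\|F\|_{r/2}^2+\tfrac{r^2}{4}\sigma^2\||\mathfrak{h}f|\|_r^2$. Summing this over the scales $r,r/2,r/4,\dots$ down to the base case $\|F\|_2^2=\Var(f)\leq\sigma^2\||\mathfrak{h}f|\|_2^2$, and using $\||\mathfrak{h}f|\|_{r/2^j}\leq\||\mathfrak{h}f|\|_r$, produces the displayed inequality with a constant linear in $r$.

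The second stage is the iteration, which is where the tensor structure enters. Applying the $L^r$ inequality to $g$ gives $\|g-\E g\|_r\leq2r\sigma\||\mathfrak{h}g|\|_r$, and I would split $\||\mathfrak{h}g|\|_r\leq\E|\mathfrak{h}g|+\||\mathfrak{h}g|-\E|\mathfrak{h}g|\|_r$. By Jensen the first term is at most $\||\mathfrak{h}g|\|_2=\||\mathfrak{h}^{(1)}g|_{\rm{HS}}\|_2$, the $k=1$ summand. To the centred second term I would apply the $L^r$ inequality once more, now to the scalar function $x\mapsto|\mathfrak{h}g(x)|$, which introduces $\mathfrak{h}(|\mathfrak{h}g|)$. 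The estimate that closes the recursion is the reverse triangle inequality for the Euclidean norm: from $\mathfrak{h}_e|\mathfrak{h}^{(k)}g|_{\rm{HS}}=|\mathfrak{h}^{(k)}g(x_{e+})|_{\rm{HS}}-|\mathfrak{h}^{(k)}g(x_{e-})|_{\rm{HS}}$ one gets $\bigl|\mathfrak{h}_e|\mathfrak{h}^{(k)}g|_{\rm{HS}}\bigr|\leq|\mathfrak{h}_e\mathfrak{h}^{(k)}g|_{\rm{HS}}$, hence $|\mathfrak{h}(|\mathfrak{h}^{(k)}g|_{\rm{HS}})|\leq|\mathfrak{h}^{(k+1)}g|_{\rm{HS}}$. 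Thus each application peels off one $L^2$ main term $(2r\sigma)^k\||\mathfrak{h}^{(k)}g|_{\rm{HS}}\|_2$ and passes the fluctuation to order $k+1$; after $d-1$ steps the last, unsplit fluctuation contributes $(2r\sigma)^d\||\mathfrak{h}^{(d)}g|_{\rm{HS}}\|_r$, which is exactly the claimed bound.

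The main obstacle is the first stage: proving the $L^r$ Poincar\'e inequality with the correct linear-in-$r$ constant despite the absence of an exact chain rule for $\mathfrak{h}$. The mean-value surrogate produces $\max(|F(x_{e+})|,|F(x_{e-})|)^{r-2}$ rather than $|F|^{r-2}$, and the work lies in showing that replacing this maximum by $|F|$ costs only a bounded factor; here the boundedness of the spins $Y_e\in\{0,1\}$ and the elementary bound $|F(x_{e+})|\leq|F(x_{e-})|+|\mathfrak{h}_ef|$ let one absorb the discrepancy into the existing terms without spoiling the recursion. This is the only point at which the discrete argument genuinely departs from \cite{gotze_higher_2019}; once it is settled, the second stage is formally identical, with the reverse-triangle Hilbert--Schmidt estimate as its sole additional ingredient.
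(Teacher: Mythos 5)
Your two-stage architecture matches the paper's proof in its second stage but diverges in the first. Stage 2 --- splitting $\||\mathfrak{h}^{(k)}g|_{\rm{HS}}\|_r$ into its $L^2$ norm plus a centred fluctuation, reapplying the $L^r$ Poincar\'e inequality to the scalar function $|\mathfrak{h}^{(k)}g|_{\rm{HS}}$, and closing the recursion with the reverse triangle inequality $|\mathfrak{h}(|\mathfrak{h}^{(k)}g|_{\rm{HS}})|\leq|\mathfrak{h}^{(k+1)}g|_{\rm{HS}}$ --- is exactly what the paper does. For stage 1, however, the paper does not run the dyadic $\|F\|_r\to\|F\|_{r/2}$ recursion. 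Instead it takes two independent copies $X,Y$ of the graph, tensorizes the $L^2$ Poincar\'e inequality over the product via $\Var(u(X,Y))\leq 2\E\Var(u\mid X)+2\E\Var(u\mid Y)$, applies this to the antisymmetric test function $u(x,y)=|g(x)-g(y)|^{r/2}\operatorname{sign}(g(x)-g(y))$, and extracts $\|g(X)-g(Y)\|_r\leq 2r\sigma\||\mathfrak{h}g|_{\rm{HS}}\|_r$ in a single H\"older step, finishing with Jensen to pass to $\|g-\E g\|_r$. Your route is the more classical Aida--Stroock moment iteration; the paper's symmetrization gets the linear-in-$r$ constant in one shot and avoids summing over scales. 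Both are legitimate, and both must confront the same discrete chain-rule obstruction you identify.

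On that obstruction, though, your proposed repair is not quite enough as stated. Bounding $\max(|F(x_{e+})|,|F(x_{e-})|)^{r-2}$ by $(|F(x)|+|\mathfrak{h}_ef(x)|)^{r-2}$ and expanding costs a factor of order $2^{r}$, which is not a bounded factor and would turn the constant $(2r\sigma)^k$ into one that is exponential in $r$. The clean fix is different: since $|F(x_{e\pm})|^{r-2}(\mathfrak{h}_ef(x))^2$ is a function of the configuration with coordinate $e$ fixed, and the conditional flip probabilities of the ERGM are uniformly bounded away from $0$ and $1$ on a compact subset of the subcritical region, one has $\E[h(Y_{e\pm})]\leq C\,\E[h(Y)]$ for any $h\geq 0$, with $C$ depending only on the model. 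This replaces the flipped configuration by $Y$ itself at the cost of a model-dependent (but $r$-independent) constant, after which your recursion $\|F\|_r^2\leq\|F\|_{r/2}^2+Cr^2\sigma^2\||\mathfrak{h}f|\|_r^2$ closes and sums to a bound linear in $r$. For the paper's application this is immaterial, since $r$ is a fixed constant and only an $O_r(\cdot)$ bound is needed; but without this step your argument proves the lemma only with a degraded constant. (It is worth noting that the paper's own symmetrization proof quietly faces the identical issue when it replaces $|g(X^e)-g(Y)|^{r-2}$ by $|g(X)-g(Y)|^{r-2}$, and resolves it by the same conditional-probability comparison.)
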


\begin{remark}
Using a modified logarithmic Sobolev inequality, \cite[Eq.(32)]{Sam2020} obtained the following inequality in Dobrushin's uniqueness region:
\be{
\|g(Y)-\E g(Y)\|_r\leq \sum_{k=1}^{d-1} (r\sigma^2)^{k/2}\||\mathfrak{h}^{(k)} g(Y)|_{\rm{HS}}\|_2+(r\sigma^2)^{d/2}\||\mathfrak{h}^{d} g(Y)|_{\rm{HS}}\|_r.
}
Our result is weaker in terms of the dependence on $r$. 
This is not surprising because we used the Poincar\'e inequality which is weaker than the modified logarithmic Sobolev inequality.
However, it enables use to prove \cref{appc} in the subcritical region.
Following the proof of \cite[Theorem 3.7]{Sam2020}, we can obtain
\begin{align*}
    \|f_{d,A}\|_r\leq (2r\sigma)^d\|A\|_2,
\end{align*}
where $f_{d,A}$ is defined in \cref{hoff}. Therefore, \cref{appc} is proved.
\end{remark}

It remains to prove \cref{lem:GS19}.

\begin{proof}[Proof of \cref{lem:GS19}]
%\begin{align}\label{POI}
%    \Var (f(Y))\leq C\IE \sum_{i\in \mathcal{I}}(\mathfrak{d}_i f(Y))^2,
%\end{align}
%where 
%\begin{align*}
%    \big(\mathfrak{d}_i f(x)\big)^2=\frac{1}{2}\big[q_Y(x^{(i,1)}\mid x)\big(f(x^{(i,1)})-f(x)\big)^2+q_Y(x^{(i,0)}\mid x)\big(f(x^{(i,0)})-f(x)\big)^2\big]
%\end{align*}
%Then from \cref{POI}, we have 
%\begin{equation}
%    \|f(Y)-\E f(Y)\|_2^2\leq \sigma^2 \E |\nabla f(Y)|^2.
%\end{equation}
Let $\{X,Y,X',Y'\}$ be i.i.d., where $Y=\{Y_e, e\in \mathcal{I}\}$ follows the ERGM model. 
Let $X^e$ be the configuration $X$ with edge $e$ flipped (i.e., open$\leftrightarrow$ closed). Let $Y^e$ be defined similarly for $Y$.
For any function $u(\cdot, \cdot): \mathcal{I}\times \mathcal{I}\to \IR$, we have
\begin{align}\label{eq:varianceu}
   & \Var \big(u(X,Y)\big)\nonumber\\= & \frac{1}{2}\E \big(u(X,Y)-u(X',Y')\big)^2 \nonumber\\
    \leq& \E\big(u(X,Y)-u(X,Y')\big)^2+\E\big(u(X,Y')-u(X',Y')\big)^2\nonumber\\
    =&2\E \Var\big(u(X,Y)\mid X\big)+2\E \Var\big(u(X,Y)\mid Y\big)\nonumber\\
    \leq& 2\E\big[\sigma^2\sum_{e\in \mathcal{I}}\big(u(X,Y)-u(X^e,Y)\big)^2\big]+2\E\big[\sigma^2\sum_{e\in \mathcal{I}}\big(u(X,Y)-u(X,Y^e)\big)^2\big],
\end{align}
where we used the Poincar\'e inequality \cref{eq:PoincareERGM} in the last inequality.

%\red{NO NEED?} Let $\nabla_e g(x)=g(x)-g(x^e)$,
%$\nabla g(x)=(\nabla_1 g(x),\dots,\nabla_N g(x))^T,$
%$|\nabla g(x)|_{\rm{HS}}=\Big(\sum_{e\in \mathcal{I}}\big(\nabla_e g(x)\big)^2\Big)^{1/2}.$ Then 
%\begin{align*}
%    \Var \big(u(x,y)\big)\leq 2\sigma^2 \E |\nabla_x u(X,Y)|_{\rm{HS}}+2\sigma^2\E |\nabla_y u(X,Y)|_{\rm{HS}}.
%    \end{align*}

    Letting $u(x,y)=|g(x)-g(y)|^{r/2}\operatorname{sign}\big(g(x)-g(y)\big)$, we have $\E u(X, Y)=0$,
    \begin{align*}
        |u(X,Y)-u(X^e,Y)|=\frac{r}{2}\big[|g(X)-g(Y)|^{r/2-1}+|g(X^e)-g(Y)|^{r/2-1}\big]\cdot |g(X)-g(X^e)|
    \end{align*}
     and, from \cref{eq:varianceu}, 
    \begin{align*}
        &\frac{1}{\sigma^2}\E|g(X)-g(Y)|^r=\frac{1}{\sigma^2}\Var(u(X, Y))\\
        \leq& 2\E\big[\sum_{e\in \mathcal{I}}\big(u(X,Y)-u(X^e,Y)\big)^2\big]+2\E\big[\sum_{e\in \mathcal{I}}\big(u(X,Y)-u(X,Y^e)\big)^2\big]\\
        \leq& \frac{r^2}{2}\sum_{e\in \mathcal{I}} \E\Big[\big[|g(X)-g(Y)|^{\frac{r}{2}-1}+|g(X^e)-g(Y)|^{\frac{r}{2}-1}\big]^2\cdot |g(X)-g(X^e)|^2\Big]\\
        & +\frac{r^2}{2}\sum_{e\in \mathcal{I}} \E\Big[\big[|g(X)-g(Y)|^{\frac{r}{2}-1}+|g(X)-g(Y^e)|^{\frac{r}{2}-1}\big]^2\cdot |g(Y)-g(Y^e)|^2\Big]\\
        \leq  &2r^2 \E \big[|g(X)-g(Y)|^{r-2}\sum_{e\in \mathcal{I}}|g(X)-g(X^e)|^2\big]\\
        &+2r^2 \E \big[|g(X)-g(Y)|^{r-2}\sum_{e\in \mathcal{I}}|g(Y)-g(Y^e)|^2\big]\\
        \leq & 2r^2\Big(\E|g(X)-g(Y)|^r\Big)^{(r-2)/r}\\
        &\times\Big(2^{r/2-1}\E\big(\sum_{e\in \mathcal{I}}(\mathfrak{h}_e g(X))^2\big)^{r/2}+2^{r/2-1}\E\big(\sum_{e\in \mathcal{I}}(\mathfrak{h}_e g(Y))^2\big)^{r/2}\Big)^{2/r}\\
        =&4r^2\big(\E|g(X)-g(Y)|^r\big)^{(r-2)/r}\Big(\E\big(\sum_{e\in \mathcal{I}}(\mathfrak{h}_e g(X))^2\big)^{r/2}\Big)^{2/r},
    \end{align*}
which implies $\|g(X)-g(Y)\|_r\leq 2r\sigma \||\mathfrak{h} g(X)|_{\rm{HS}}\|_r$. By Jensen inequality, $$\|g(X)-g(Y)\|_r\geq \|g-\E g(X)\|_r\geq \|g(X)\|_r-\|g(X)\|_2.$$ Now we obtain
\begin{align}\label{eq:sametoken}
    \|g(X)\|_r\leq \|g(X)\|_2+4r\sigma  \||\mathfrak{h} g(X)|_{\rm{HS}}\|_r.
\end{align}
Applying \cref{eq:sametoken} to $|\mathfrak{h} g(X)|_{\rm{HS}}$, we obtain
 $\||\mathfrak{h} g(X)|_{\rm{HS}}\|_r\leq \||\mathfrak{h} g(X)|_{\rm{HS}}\|_2+4r\sigma \||\mathfrak{h}|\mathfrak{h} g(X)|_{\rm{HS}}|_{\rm{HS}}\|_r$. From the triangle inequality for the norm $|\cdot|_{\rm{HS}}$, it can be verified that $|\mathfrak{h}|\mathfrak{h} g(x)|_{\rm{HS}}|_{\rm{HS}}\leq |\mathfrak{h}^{(2)} g(x)|_{\rm{HS}}.$ Thus,
 \begin{align*}
     \||\mathfrak{h} g(X)|_{\rm{HS}}\|_r\leq \||\mathfrak{h} g(X)|_{\rm{HS}}\|_2+4r\sigma  |\mathfrak{h}^{(2)} g(x)|_{\rm{HS}}.
 \end{align*}
Repeating the above argument, we finally obtain 
 \begin{align*}
     \|g(X)-\E g(X)\|_r\leq& \sum_{k=1}^{d-1} (2r\sigma)^k\||\mathfrak{h}^{(k)} g(X)|_{\rm{HS}}\|_2+(2r\sigma)^d\||\mathfrak{h}^{(d)} g(X)|_{\rm{HS}}\|_r.
 \end{align*}
for each $d\geq 1$. 
%Once we get the above inequality similar to \cite[Eq. 32]{Sam2020}:
%$$\|g(X)-\E g(X)\|_r\leq \sum_{k=1}^{d-1} (r^{1/2}\sigma)^k\||\nabla^k g(X)|_{\rm{HS}}\|_2+(2r\sigma)^d\||\nabla^d g(X)|_{\rm{HS}}\|_r.$$
\end{proof}

\end{document}